\def\1{\bm{1}}
\DeclareMathAlphabet{\mathsfit}{\encodingdefault}{\sfdefault}{m}{sl}
\SetMathAlphabet{\mathsfit}{bold}{\encodingdefault}{\sfdefault}{bx}{n}
\newcommand{\E}{\mathbb{E}}
\newcommand{\R}{\mathbb{R}}
\newcommand{\KL}{D_{\mathrm{KL}}}
\DeclareMathOperator*{\argmax}{arg\,max}
\newcommand*{\SET}[1]  {\ensuremath{\mathbb{#1}}}
\newcommand{\N}{\SET{N}}
\newcommand{\F}{\mathcal{F}}
\newcommand{\dist}{\operatorname{dist}}
\newcommand{\crit}{\operatorname{crit}}
\newcommand{\di}{\mathrm{d}}
\newcommand{\PXi}{\mathcal{P}(\Xi)}
\newcommand{\PXixi}{\mathcal{P}(\Xi \times \Xi)}
\newcommand{\ExiP}{\E_{\xi \sim P}}
\newcommand{\Ezetaxi}{\E_{\zeta \sim \pi_0(\cdot | \xi)}}
\newcommand{\Ezetaxif}{\E_{\zeta \sim \pi_0^{\frac{f - \lambda c(\xi, \cdot)}{\epsilon + \lambda \tau}}(\cdot | \xi)}}
\newcommand{\fclambda}{\frac{f(\zeta) - \lambda c(\xi,\zeta)}{\epsilon + \lambda \tau}}
\newcommand{\ftauepsilon}{\frac{\tau f(\zeta) + \epsilon c(\xi,\zeta)}{\epsilon + \lambda\tau}}
\newcommand{\maxF}{\|\mathcal{F}\|_\infty}
\newcommand{\lambdalow}{\lambda_{\operatorname{low}}}
\newcommand{\lambdaup}{\lambda_{\operatorname{up}}}
\newcommand{\radiusmax}{\rho_{\max}}
\newcommand{\radiuscrit}{\rho_{\crit}}
\newcommand{\parx}[1]{\left(#1\right)}
\newcommand{\bracket}[1]{\left[#1\right]}
\newcommand{\accol}[1]{\left\{#1\right\}}
\newtheorem{theorem}{Theorem}[section]
\newtheorem{lemma}{Lemma}[section]
\newtheorem{proposition}{Proposition}[section]
\newtheorem{definition}{Definition}[section]
\newtheorem{remark}{Remark}[section]
\newtheorem{assumption}{Assumption}[section]
\title{Universal generalization guarantees for Wasserstein distributionally robust models}
\author{%
	Tam Le \hspace{1cm} Jérôme Malick \\ Univ. Grenoble Alpes, CNRS, Grenoble INP, LJK \\
	Grenoble, 38000, France}
\begin{document}

	\maketitle
	
	\begin{abstract}
		Distributionally robust optimization has emerged as an attractive way to train robust machine learning models, capturing data uncertainty and distribution shifts. Recent statistical analyses have proved that generalization guarantees of robust models based on the Wasserstein distance have generalization guarantees that do not suffer from the curse of dimensionality. However, these results are either approximate, obtained in specific cases, or based on assumptions difficult to verify in practice.  
		In contrast, we establish exact generalization guarantees that cover a wide range of cases, with arbitrary transport costs and parametric loss functions, including deep learning objectives with nonsmooth activations.
		We complete our analysis with an excess bound on the robust objective and an extension to Wasserstein robust models with entropic regularizations.
	\end{abstract}

	\section{Introduction}

	\subsection{Wasserstein robustness: models and generalization}
	
	Machine learning models are challenged in practice by many obstacles, such as 
	biases in data, adversarial attacks, or data shifts between training and deployment. Towards more resilient and reliable models, distributionally robust optimization has emerged as an attractive paradigm, where training 
	no longer relies on minimizing the empirical risk but rather on an optimization problem that takes into account potential perturbations in the data distribution; see e.g.,\;the review articles \cite{kuhn2019wasserstein,blanchet2021review}.
	
	More specifically, the approach consists in minimizing the worst-risk among all distributions in a neighborhood of the empirical data distribution. A natural way \citep{MohajerinEsfahaniKuhn2018} to define such a neighborhood is to use the optimal transport distance, called the Wasserstein distance \citep{villani}. Between two suitable distributions $Q$ and $Q'$ on a sample space $\Xi$, we may define the optimal transport cost among all coupling  $\pi$ on $\Xi\times\Xi$ having $Q$ and $Q'$ as marginals:
	\begin{equation}
		\label{eq:wassersteinmetric}
		W_c(Q,Q') = \inf_{\substack{\pi \in \mathcal{P}(\Xi \times \Xi)\\ [\pi]_1 = Q, [\pi]_2 = Q'}} \E_{(\xi,\zeta) \sim \pi}[c(\xi,\zeta)],
	\end{equation}
	where $c\colon \Xi\times\Xi \rightarrow \R$ is a non-negative cost function. 
	When $c$ is the power $p \geq 1$ of a distance on $\Xi$, this corresponds to the $p$-Wasserstein distance. For a class of loss functions $\F$,  
	the Wasserstein distributionally robust counterpart of the standard empirical risk minimization (ERM) then writes
	\begin{equation}\label{eq:WDRO0}
		\min_{f \in \F} ~~\sup_{Q \in \PXi, W_c(\widehat{P}_n, Q) \leq \rho} \E_{\xi \sim Q}[f(\xi)],
	\end{equation}
	for a chosen radius $\rho$ of the Wasserstein ball centered at the empirical data distribution, denoted $\widehat{P}_n$. This procedure is often referred to as Wasserstein Distributionally Robust Optimization (WDRO). In the degenerate case
	$\rho=0$, we have $Q=\widehat{P}_n$ and \eqref{eq:WDRO0} boils down to ERM. If $\rho>0$, the training captures data uncertainty and provides more resilient learning models; see e.g.\;the discussions and illustrations in \cite{logisticregWDRO2016,sinha2018guarantees,ZHAO2018riskaverse,kwon2020principled,lichen2020,taskesen21asequential, gao2022variationregularization,ARRIGO2022304,belbasi2023s}.
	
	To support theoretically the modeling versatility and the practical success of these robust models, some statistical guarantees have been proposed in the literature. For a 
	population distribution $P$, i.i.d.\;samples $\xi_1, \ldots, \xi_n$ drawn from $P$,  
	and the associated empirical distribution 
	$\widehat{P}_n := \frac{1}{n} \sum_{i=1}^{n} \delta_{\xi_i}$,
	the best concentration results for the Wasserstein distance \citep{Fournier2015} 
	gives that if 
	the radius $\rho$ is large enough, then the Wasserstein ball around $\widehat{P}_n$ contains 
	the true distribution $P$ with high probability, which in turn gives directly (see \cite{MohajerinEsfahaniKuhn2018}) a generalization bound of the form
	\begin{equation}
		\label{eq:generalization_bound_intro}
		\sup_{Q \in \PXi, W_c(\widehat{P}_n, Q) \leq \rho} \!\E_{\xi \sim Q}[f(\xi)] ~~\geq~~ \ExiP[f(\xi)].
	\end{equation}

	This bound is exact in the sense that it introduces no approximation term between the true risk and the robust risk, unlike standard generalization bounds of ERM (\cite{Vapnik1999-VAPTNO,Bartlett2006}).  
	This property \eqref{eq:generalization_bound_intro} is specific to  WDRO and highlights its potential to give more resilient models: the left-hand-side, which is the quantity that we compute from data and optimize by training, provides a control on the right-hand-side which is the idealistic population risk. 
	
	In order to obtain such an attractive guarantee, \cite{MohajerinEsfahaniKuhn2018} needs to take a large radius $\rho$. 
	Indeed, the direct application of concentration results of \cite{Fournier2015} requires a radius scaling as $O(1/n^{\frac{1}{d}})$, where $d$ is the data dimension. Due to the exponential dependence, in high dimension, this value is almost constant with respect to $n$, hence suggesting that the exact bound \eqref{eq:generalization_bound_intro} can hold only for conservative values of $\rho$.

	Recent works have proposed various statistical guarantees for WDRO by establishing generalization bounds that do not suffer from the above curse of the dimension; we further discuss them in the related work section in \cref{related_work}.
	These results generally feature a radius $\rho$ scaling as $O(1/\sqrt{n})$, which is the standard rate in ERM generalization bounds. Yet, no existing result on robust models precisely retrieve the original exact bound \eqref{eq:generalization_bound_intro} with the $1/\sqrt{n}$ rate, in a general setting.

	\subsection{Contributions and outline}

	In this paper, we establish exact generalization guarantees of the form \eqref{eq:generalization_bound_intro} under general assumptions that cover many machine learning situations. Our results apply to any kind of data lying in a metric space (e.g.\;classification and regression tasks with mixed features) and general classes of continuous loss functions (e.g.\;from standard regression tasks to deep learning models) as long as standard compactness conditions are satisfied. For instance, our results cover nonsmooth objectives that are particularly present in deep learning with the use of  $\operatorname{ReLU}$ activation function, max-pooling operator, or optimization layers. 

	To avoid using concentration results of \cite{Fournier2015} involving a radius scaling as $O(1/n^{\frac{1}{d}})$, we develop a novel optimization-based proof, 
	directly tackling the nonsmoothness of the robust objective function\:\eqref{eq:WDRO0} 
	with tools from variational analysis\;\citep{clarke1990optimization,Rockafellar_1998,infinite}. We thus obtain general results with $\rho$ scaling as $O(1/\sqrt{n})$, capturing all possible nonsmoothness and coinciding with previous study for robust linear models \citep{shafieezadeh2019linear}.
	
	Moreover, our approach is systematic enough to (i) provide estimates of the excess errors quantifying by how much the robust objective may exceed the true risk, and (ii) extend to the recent versions of Wasserstein/Sinkhorn distributionally robust problems that involve (double) regularizations \citep{azizian2023regularization, wang2023sinkhorn}. We thus complete the only existing analysis of regularized WDRO  \citep{azizian2023exact} by obtaining generalization results for double regularization \citep{azizian2023regularization} when dealing with arbitrary costs and nonsmooth objectives.

	The paper is structured as follows. 
	First, \Cref{sec:ex_ml} introduces and illustrates the setting of this work. Then \Cref{section:contributions} presents and discusses the main results: the generalization guarantees (\Cref{th:generalization_standard} and \Cref{th:generalization_regularized}), the excess risk bounds (\Cref{th:excess_risk_standard} and \Cref{th:excess_risk_regularization}) and the specific case of linear models (\Cref{subsection:linearmodels}). This section ends with \Cref{sec:limitations} discussing the limitations of our study and potential extensions. Finally, \Cref{sec:sketch_proof} highlights our proof techniques, combining classical concentration lemma and advanced nonsmooth analysis.

	\subsection{Related work}
	\label{related_work}

	The majority of papers studying generalization bounds of Wasserstein distributionally robust models
	establish \emph{approximate} generalization bounds.
	These approximate bounds introduce vanishing terms depending on $n$ and $\rho$ which embody the bias of WDRO. One of the first papers on such approximate bounds is \cite{leeragisnky2018}, and important results in this direction include \cite{blanchet2021confidenceregions,blanchet2023statistical} about asymptotical results for smooth losses, and \cite{chen2018linear} about non-asymptotically results for linear models and for smooth loss functions. Let us also mention \cite{Yang2022WassersteinRF} which deals with 0-1 loss, and \cite{gaofinitesample2022} which focuses on Wasserstein-1 uncertainty and the connection with 
	Lipschitz norm regularization. In this paper, we rather focus of \emph{exact} bounds of the form of \eqref{eq:generalization_bound_intro}, 
	which are out of reach of ERM-based models, and thus capture the essence of WDRO.

	The literature about exact bounds is scarcer than the one about approximate bounds and significantly different in terms of proof techniques. Let us mention \cite{shafieezadeh2019linear} which establishes exact guarantees for linear regression models, and \cite{gaofinitesample2022} which proposes tighter results for linear regression and Wasserstein-1 uncertainty. The closest work to our paper is \cite{azizian2023exact}, which initiates a general study on exact bounds. There, the authors establish generalization results similar to ours, namely: exact bounds \eqref{eq:generalization_bound_intro} in a regime where $\rho > O(1/\sqrt{n})$. In sharp contrast with our work, the results from \cite{azizian2023exact} rely on restrictive assumptions to overcome the nonsmoothness of the robust objective: the squared norm for the cost $c$, a Gaussian reference distribution, additional growth conditions, and abstract compactness conditions. We will further compare the setting and the results, in \Cref{sec:ex_ml} and \Cref{section:contributions} and in the supplemental. In our work, we directly deal with nonsmoothness, thanks to tools from nonsmooth analysis, and thus we are able to alleviate extra assumptions and capture nonsmooth losses.
	
	The only other work regarding nonsmooth objectives is \cite{gaoNonsmooth2021} which derives results on piece-wise smooth losses, at the price of abstract approximating constants. We underline that none of the existing results properly covers nonsmooth losses, in particular deep learning objectives with nonsmooth activations. 
	
	Finally, 
	let us mention that there exist many works studying 
	generalization guarantees for other distributionally robust models, involving different uncertainty quantification. For instance, \cite{zeng2022generalization} studies nonparametric families and divergence-based ambiguity, and \cite{bennouna2023certified} considers deep learning models with ambiguity sets that combine KL divergence and adversarial corruptions. Though duality is always an important tool, we face in our framework to the specific difficulty of dealing with Wasserstein distances, so that the technicalities as well as the results of our paper are essentially different and disjoint from these works.

	\subsection{Notations} 
	
	Throughout the paper $(\Xi,d)$ is a metric space, where $d$ is a distance, $\F$ is a family of loss functions $f : \Xi \to \R$ and $c : \Xi \times \Xi \to \R$ is a cost function.
	
	\paragraph{On probability spaces.}  We denote the space of probability measures on $\Xi$ by $\mathcal{P}(\Xi)$. For all $\pi \in \mathcal{P}(\Xi \times \Xi)$,  $i \in \{1,2\}$, we denote the $i^{\text{th}}$  marginal of $\pi$ by $[\pi]_i$. We denote the Dirac mass at $\xi \in \Xi$ by $\delta_\xi$. Given a measurable function $g : \Xi \to \R$, we denote the expectation of $g$ with respect to $Q \in \PXi$ by $\E_{\xi \sim Q}[g(\xi)]$ and we may also use the shorthand $\E_{Q}[g]$. 

	\paragraph{On function spaces.} For a function $f : \Xi \to \R$, we denote the uniform norm by $\|f\|_\infty := \sup_{\xi \in \Xi} |f(\xi)|$. By extension, we use the notation $\|\F\|_\infty := \sup_{f \in \F} \|f\|_\infty$. Whenever well-defined, we denote the set of maximizers of $f$ on $\Xi$ by $\argmax_\Xi f := \{\zeta \in \Xi \ : \  f(\zeta) = \max_{\xi \in \Xi} f(\xi)\}$.
	
	We say $f$ is \emph{Lipschitz} with constant $L > 0$ 
	if for all $\xi, \zeta \in \Xi$, $|f(\xi) - f(\zeta)| \leq L d(\xi,\zeta)$. 
	For a function $\phi$, we denote $\partial^+_\lambda \phi$ the right-sided derivative with respect to $\lambda \in \R$, and $\partial_\lambda \phi$ its derivative, whenever well-defined.

	\section{Assumptions and examples} \label{sec:ex_ml}

	In this section, we present the general framework and illustrate it by standard examples. We make the following assumptions on the sample space $\Xi$, the space of loss functions $\F$ and the cost $c$.
	\begin{assumption}
		\label{ass:general_assumptions}
		\begin{enumerate}
			\item[]
			\item $(\Xi, d)$ is compact.
			\item $c$ is jointly continuous with respect to $d$, non-negative, and $c(\xi,\zeta) = 0$ if and only if $\xi = \zeta$.
			\item Every $f \in \F$ is continuous and $(\mathcal{F}, \|\cdot\|_\infty)$ is compact. Furthermore, if $N(t, \mathcal{X}, \|\cdot\|_\infty)$ is the $t$-packing number of $\F$\footnote{The maximal number of functions in $\F$ that are at least at a distance $t$ from each other.}, then Dudley's entropy of $\mathcal{F}$, $$\mathcal{I}_\F := \int_{0}^\infty \sqrt{\log N(t, \mathcal{X}, \|\cdot\|_\infty)} dt,$$ is finite.

		\end{enumerate}
	\end{assumption}


	Our setting encompasses many machine learning scenarios, with parametric models, general loss functions, and general transport costs as illustrated in the two paragraphs below. In \Cref{sec:limitations}, we will also come back  on these assumptions to discuss their reach and limitations.
	
	Before this, let us underline that this set of assumptions is quite general and allows us to conduct a unified study and to relieve several restrictions found in previous works. In particular, we mention that the results of the closest work \cite{azizian2023exact} require convexity of $\Xi$, differentiability of losses $f \in \F$, restriction to the squared Euclidean distance, together with a strong\footnote{We show in \Cref{prop:continuity_argmax} in the appendix that the compactness assumptions \cite[Assumption 5]{azizian2023exact} hide strong conditions on the maximizers.} 
	structural property  on $\mathcal{F}$ \cite[Assumption 5]{azizian2023exact}, aimed at overcoming the nonsmoothness of the WDRO objective. In our sketch of proof in \Cref{sec:sketch_proof}, we explain how we deal directly with nonsmoothness.

	\paragraph{Parametric models and loss functions.} 
	Our setting covers a wide range of machine learning models. 
	Consider a parametric family  
	$\mathcal{F} = \accol{f(\theta, \cdot) \ : \  \theta \in \Theta}$,
	where the parameter space $\Theta \subset \R^p$ is compact and  the loss function $f\colon\Theta \times \Xi \to \R$ is jointly Lipschitz continuous. Since $\Xi$ is compact, such a family is compact regarding $\|\cdot\|_\infty$ and $\mathcal{I}_\mathcal{F}$ is finite, proportional to $\sqrt{p}$. This situation covers regression models, k-means clustering, and neural networks. 
	For example: least-squares regression
	$$f(\theta, (x, y)) = (\langle \theta, x \rangle - y)^2, \quad \Xi \subset \R^m \times \R,$$
	logistic regression
	$$f(\theta, (x, y)) = \log \parx{1 + e^{-y \langle \theta,x\rangle}}, \quad \Xi \subset \R^m \times \{-1, 1\},$$
	and support vector machines with hinge loss
	$$f(\theta, (x, y)) = \max \accol{0,1 - y\langle \theta, x\rangle}, \hspace{0.8em}  \Xi \subset \R^m \times \{-1, 1\}.$$
	Note that 
	the latter is not differentiable, due to the max term. The k-means model also introduces a non-differentiable loss function: 
	$$
	f(\theta, x) = \min_{i \in \accol{1, \ldots, K}} \|\theta_i- x\|_2^2, \quad \Theta \subset \R^{K \times m}\!, ~\Xi \subset \R^m.$$
	Finally, most deep learning models fall in our setting. Indeed, they involve loss functions of the form 
	$$f(\theta, (x, y)) = \ell(h(\theta,x),y),$$ 
	where $\ell$ is a dissimilarity measure and $h$ is a parameterized prediction function, built as a composition of affine transformations (which are the parameters to train) with activation functions (see e.g.\;\cite{krizhevsky,lecun2015deeplearning,yolo}). Our setting is general enough to encompass all continuous activation functions, even non-differentiable ones (as $\operatorname{ReLU}= \max(0, \cdot)$)
	as well as other nonsmooth elementary blocks (as max-pooling \citep{resnet}, sorting procedures \citep{sander}, and optimization layers \citep{amos2017optnet}).
	As already underlined in introduction, these examples involving non-differentiable terms are not covered by existing results.

	\paragraph{Sample space and transport costs.} 
	The choice of the transport cost $c$ depends on the nature of the data and of the potential data uncertainty. For instance, if the variables are continuous with $\Xi \subset \R^m$, we consider the distance $d = \|\cdot - \cdot\|_p$ induced by $\ell_p$-norm ($p \in [1, \infty]$) and the cost as a power ($q \in [1,\infty)$) of the distance
	$$
	c(\xi,\xi') = \|\xi - \xi'\|_p^q.
	$$
	If the variables are discrete with $\Xi \subset \accol{1, \ldots, J}^{m}$, we consider the distance 
	$$
	d(\xi, \xi') = \sum_{i = 1}^m \mathds{1}_{\accol{\xi_i \neq \xi_i'}}
	$$
	and the cost as a power of this distance.
	If we deal with mixed data, i.e. they contain both continuous and discrete variables, a sum of the previous costs can be considered. In classification, for instance, with the samples composed of features $x \in \R^m$ and a target $y \in \accol{-1, 1}$, we may take
	\begin{equation*}
		c((x,y), (x',y')) = \|x-x'\|_p^{q} + \kappa \mathds{1}_{\accol{y \neq y'}}
	\end{equation*}
	for a chosen $\kappa > 0$. This cost is obviously continuous with respect to the natural distance
	\begin{equation*}
		d((x,y),(x',y')) = \|x - x'\|_p + \mathds{1}_{\accol{y \neq y'}}.
	\end{equation*}
	This extends to mixed data with categorical, binary and continuous variables; see e.g.\;\cite{belbasi2023s}.

	\section{Main results}
	\label{section:contributions}

	\subsection{Wasserstein robust models}
	\label{subsec:wdro_results}
	Our main result establishes a generalization bound \eqref{eq:generalization_bound_intro} for Wasserstein distributionally robust optimization (WDRO). Given a distribution $Q \in \mathcal{P}(\Xi)$ and a loss $f \in \F$, the robust risk around $Q$ with radius $\rho > 0$ is defined as
	\begin{equation}
		\label{eq:wdro0}
		R_{\rho,Q}(f) := \sup_{Q' \in \mathcal{P}(\Xi), W_c(Q,Q') \leq \rho} \E_{\xi \sim Q'}[f(\xi)].
	\end{equation}
	In particular, taking $Q= \widehat{P}_n$ and $Q = P$ in the above expression, we consider the empirical robust risk, $\widehat{R}_{\rho}(f)$, and the true robust risk, $R_\rho(f)$:
	\begin{equation*}
		\widehat{R}_{\rho}(f):= R_{\rho,\widehat{P}_n}(f) \text{ \quad and \quad } R_\rho(f) := R_{\rho, P}(f).
	\end{equation*}
	We also introduce the following constant, called the \emph{critical radius} $\radiuscrit$, 
	\begin{equation}
		\label{eq:crit}
		\radiuscrit := \inf_{f \in \F} \ExiP\bracket{\min \accol{c(\xi, \zeta) \ : \ \zeta \in \argmax_\Xi f}}.
	\end{equation}
	Note that $\radiuscrit$ is defined from the true distribution $P$, which makes it a deterministic quantity. 
	In our results, we will make the further assumption that $\radiuscrit > 0$, which excludes\footnote{See \Cref{prop:radius_lossfluctuation_appendix}, in \Cref{app:crit} about the interpretation of the critical radius.} losses that remain constant across all samples from the ground truth distribution $P$. This assumption reasonably aligns with practice and is also in line with the previous works \cite{gaoNonsmooth2021,azizian2023exact}. For instance, obtaining a predictor that precisely interpolates the ground truth distribution (leading to a loss equal to zero everywhere) is unrealistic. In this context, our main result then establishes the generalization bound 
	when $n$ is large enough, for $\rho$ scaling with the standard $1/\sqrt{n}$ rate.
	
	\begin{theorem}[Generalization guarantee for Wasserstein robust models] \label{th:generalization_standard} If \Cref{ass:general_assumptions} holds and $\radiuscrit>0$, then there exists $\lambdalow>0$ such that when $n > \frac{16(\alpha + \beta)^2}{\radiuscrit^2}$ and $\rho > \frac{\alpha}{\sqrt{n}}$, we have with probability at least $1 - \delta$, 
		\begin{equation*}
			\widehat{R}_{\rho}(f) ~\geq~ \ExiP[f(\xi)] \qquad \text{for all $f \in \F$},
		\end{equation*}
		where $\alpha$ and $\beta$ are the two constants
		$$
		\alpha = 48 \left(\maxF\!+\!\frac{1}{\lambdalow}\!\right) \left(\!\mathcal{I}_\F + \frac{2}{\lambdalow}\!\right) + \frac{2 \maxF}{\lambdalow}\sqrt{2 \log \frac{2}{\delta}},
		\qquad 
		\beta = \frac{96 \mathcal{I}_\F}{\lambdalow} +  \frac{4\maxF}{\lambdalow} \sqrt{2 \log \frac{4}{\delta}}.
		$$
		
	\end{theorem}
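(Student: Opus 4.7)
My plan is to combine strong Wasserstein DRO duality with a nonsmooth variational analysis of the dual multiplier, followed by a uniform empirical-process concentration bound. Step~1 will rewrite the problem as a one-dimensional dual; Step~2 will establish a uniform lower bound on the dual optimum (the main obstacle); Step~3 will concentrate the envelope and conclude.

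\emph{Step 1 (dualize).} I would start from the standard Wasserstein DRO duality identity
\begin{equation*}
\widehat{R}_\rho(f) \,=\, \inf_{\lambda \geq 0} g_f(\lambda), \qquad g_f(\lambda) \,:=\, \lambda\rho + \E_{\xi \sim \widehat{P}_n}\!\left[\phi_\lambda(\xi,f)\right], \qquad \phi_\lambda(\xi,f) \,:=\, \sup_{\zeta \in \Xi}\{f(\zeta) - \lambda c(\xi,\zeta)\}.
\end{equation*}
Note that $g_f$ is convex in $\lambda$, and since $c(\xi,\xi)=0$, the key pointwise inequality $\phi_\lambda(\xi,f) \geq f(\xi)$ holds, whence $\ExiP[\phi_\lambda(\cdot,f)] \geq \ExiP[f]$ for every $\lambda \geq 0$. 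If the infimum is attained at $\lambda^*(f)=0$, the bound $\widehat{R}_\rho(f) = \sup_\Xi f \geq \ExiP[f]$ is immediate. Otherwise, with $\lambda^*(f) > 0$, it suffices to show $\lambda^*(f)\rho + \E_{\widehat{P}_n}[\phi_{\lambda^*(f)}(\cdot,f)] \geq \ExiP[\phi_{\lambda^*(f)}(\cdot,f)]$, at which point the pointwise inequality closes the argument.

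\emph{Step 2 (uniform lower bound on $\lambda^*$, the main obstacle).} The crux of the proof, and the only genuinely delicate ingredient, is to produce a deterministic $\lambdalow > 0$ such that, with high probability, either $\lambda^*(f) = 0$ or $\lambda^*(f) \geq \lambdalow$, uniformly in $f \in \F$. A Danskin-type envelope computation, phrased in the Clarke subdifferential calculus (essential since neither $f$ nor the inner supremum is assumed differentiable), will give
\begin{equation*}
\partial^+_\lambda g_f(0) \,=\, \rho \,-\, \E_{\widehat{P}_n}\!\left[\min\{c(\xi,\zeta) \,:\, \zeta \in \argmax_\Xi f\}\right].
\end{equation*}
Under $P$ the expectation on the right is at least $\radiuscrit$ by definition, and uniform concentration of the class $\{\xi \mapsto \min_{\zeta \in \argmax_\Xi f} c(\xi,\zeta)\,:\,f \in \F\}$ (this is what accounts for the $\beta/\sqrt{n}$ piece) will upgrade the inequality to the empirical measure. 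The hypothesis $n > 16(\alpha+\beta)^2/\radiuscrit^2$ is exactly what ensures this empirical lower bound still dominates $\rho$ by a fixed fraction of $\radiuscrit$. To turn this qualitative strict negativity of $\partial^+_\lambda g_f(0)$ into a quantitative $\lambda^*(f) \geq \lambdalow$, I will invoke an upper-Lipschitz estimate on the multifunction $\lambda \mapsto \argmax_\zeta\{f(\zeta) - \lambda c(\xi,\zeta)\}$, obtained via variational analysis using compactness of $(\Xi,d)$ and $(\F,\|\cdot\|_\infty)$ together with the separation $c(\xi,\zeta) = 0 \Leftrightarrow \xi = \zeta$. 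The hard part is quantifying how the subdifferential of $g_f$ varies with $\lambda$ without any smoothness, since the usual implicit function theorem is unavailable.

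\emph{Step 3 (concentration and conclusion).} For $\lambda \geq \lambdalow$, the family $\{\phi_\lambda(\cdot,f):f \in \F\}$ is uniformly bounded by $\maxF$ and $1$-Lipschitz in $f$ under $\|\cdot\|_\infty$, while $\lambda \mapsto \phi_\lambda$ is Lipschitz with constant $\sup_{\Xi \times \Xi} c < \infty$. I will chain Dudley's entropy integral $\mathcal{I}_\F$ in the $f$-variable with a cover of the admissible range of $\lambda$ (whose size introduces the $1/\lambdalow$ factors visible in $\alpha$), closed by McDiarmid's inequality, to obtain with probability at least $1 - \delta/2$:
\begin{equation*}
\sup_{f \in \F,\ \lambda \geq \lambdalow}\, \bigl|\, \E_{\widehat{P}_n}[\phi_\lambda(\cdot,f)] - \ExiP[\phi_\lambda(\cdot,f)] \,\bigr| \,\leq\, \frac{\alpha}{\sqrt{n}}.
\end{equation*}
On the intersection event of probability $\geq 1 - \delta$, either $\lambda^*(f) = 0$ (trivial case) or $\lambda^*(f) \geq \lambdalow$, in which case
\begin{equation*}
\widehat{R}_\rho(f) \,\geq\, \lambda^*(f)\rho + \ExiP[\phi_{\lambda^*(f)}(\cdot,f)] - \frac{\alpha}{\sqrt{n}} \,\geq\, \ExiP[f] + \lambdalow\,\rho - \frac{\alpha}{\sqrt{n}} \,\geq\, \ExiP[f]
\end{equation*}
by the hypothesis $\rho > \alpha/\sqrt{n}$, with the $1/\lambdalow$ factor already absorbed into $\alpha$. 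The entire plan thus rests on Step~2; once the uniform gap on $\lambda^*(f)$ is in hand, Steps~1 and~3 are classical.
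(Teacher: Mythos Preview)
Your overall architecture (dualize, secure a uniform lower bound on the dual multiplier, then concentrate) is exactly the paper's, but two of your implementation choices will not go through as stated, and both are precisely the places where the paper makes a non-obvious move.

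\textbf{Step 2.} You propose to concentrate the class $\{\xi \mapsto \min\{c(\xi,\zeta):\zeta\in\argmax_\Xi f\} : f\in\F\}$ uniformly in $f$, and then to extract a quantitative $\lambdalow$ from an ``upper-Lipschitz estimate'' on $\lambda\mapsto\argmax_\zeta\{f(\zeta)-\lambda c(\xi,\zeta)\}$. Neither is available: the $\argmax$ multifunction is only \emph{outer} semicontinuous in $(\lambda,f)$, so $f\mapsto \min c(\xi,\argmax_\Xi f)$ is merely lower semicontinuous, not Lipschitz (a near-constant $f$ perturbed slightly can make its $\argmax$ collapse to a single boundary point), and Dudley-type chaining does not apply. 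The paper sidesteps this completely: $\lambdalow$ is defined \emph{at the population level} via $\radiusmax(\lambda):=\inf_{f}\E_P[-\partial^+_\lambda\phi(\lambda,f,\cdot)]$, showing only that $\radiusmax(\lambda)\to\radiuscrit$ as $\lambda\to 0^+$ (which needs nothing beyond outer semicontinuity and compactness of $\F$). The transfer to $\widehat P_n$ never touches the derivative at $0$: it uses that the \emph{secant slope} $\tfrac{1}{\lambdalow}\bigl(\E_Q[\phi(2\lambdalow,f,\cdot)]-\E_Q[\phi(\lambdalow,f,\cdot)]\bigr)$ involves $\phi$ at two \emph{fixed} $\lambda$-values, and $\phi(\lambda,\cdot,\xi)$ is $1$-Lipschitz in $f$, so two applications of uniform concentration yield the $\beta$ constant. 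Convexity of $\lambda\mapsto\phi$ then converts the slope inequality into $\lambda^*(f)\ge\lambdalow$.

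\textbf{Step 3.} You plan to chain over $\lambda\in[\lambdalow,\infty)$ using that $\lambda\mapsto\phi_\lambda$ is $\|c\|_\infty$-Lipschitz; but this interval is unbounded, so the covering number is infinite and the chaining bound diverges (no upper bound on $\lambda^*$ is claimed, and indeed in the standard case the paper takes $\lambdaup=\infty$). The paper's device is to factor out $\lambda$ first: writing $\lambda\rho+\E_{\widehat P_n}[\phi_\lambda]=\lambda\bigl(\rho-\tfrac{1}{\lambda}(\E_P-\E_{\widehat P_n})[\phi_\lambda]\bigr)+\E_P[\phi_\lambda]$ shows that the quantity to control is $(\E_P-\E_{\widehat P_n})[\psi(\mu,f,\cdot)]$ with $\psi(\mu,f,\xi):=\mu\,\phi(\mu^{-1},f,\xi)=\sup_\zeta\{\mu f(\zeta)-c(\xi,\zeta)\}$ and $\mu=1/\lambda\in(0,\lambdalow^{-1}]$. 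On this \emph{bounded} domain $\psi$ is $(\maxF+\lambdalow^{-1})$-Lipschitz in $(\mu,f)$ and bounded by $\maxF/\lambdalow$, which is exactly what produces the $(\maxF+\lambdalow^{-1})(\mathcal I_\F+2/\lambdalow)$ and $\maxF/\lambdalow$ factors in $\alpha$. Your final chain $\widehat R_\rho(f)\ge \E_P[f]+\lambdalow\rho-\alpha/\sqrt n$ then requires $\rho>\alpha/(\lambdalow\sqrt n)$; absorbing $1/\lambdalow$ into $\alpha$ post hoc does not reproduce the stated constant, whereas the $\psi$-reparametrization gives it directly and yields the stronger intermediate conclusion $\widehat R_\rho(f)\ge R_{\rho-\alpha/\sqrt n}(f)$.
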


	This result with $\rho$ scaling with the dimension-free $1/\sqrt{n}$ rate is similar to \cite[Theorem\;3.1]{azizian2023exact}, but guaranteed now in the wide setting of \Cref{ass:general_assumptions}. We achieve this result through a novel proof technique that combines nonsmooth analysis rationale with classical concentration results; as depicted in \Cref{sec:sketch_proof}.
	
	The critical radius $\radiuscrit$ can be interpretated as a degeneracy threshold of the robust problem; we discuss it below in \Cref{rem:rhocrit}.
	The quantity $\lambdalow$ is a positive constant related to the geometry of the Wasserstein ambiguity set; we discuss it in \Cref{subsection:lowerbound}. 
	Interestingly, in the case of linear and logistic regressions, we can establish
	estimates of these two quantities; see\;\Cref{subsection:linearmodels}.
	
	We now extend the previous result to derive the following excess risk bound.
	
	\begin{proposition}[Excess risk for Wasserstein robust models] \label{th:excess_risk_standard}
		Let $\alpha$ be given by \Cref{th:generalization_standard}. Under \Cref{ass:general_assumptions}, if $\radiuscrit > 0$, $n > \frac{16 \alpha^2}{\radiuscrit^2}$ and $\rho \leq \frac{\radiuscrit}{4} - \frac{\alpha}{\sqrt{n}}$, then with probability at least $1 - \delta$, 
		\begin{equation*}
			\widehat{R}_{\rho}(f) \leq R_{\rho + \frac{\alpha}{\sqrt{n}}}(f)\qquad \text{for all $f \in \F$}.
		\end{equation*}
		In particular, if $c=d(\cdot,\cdot)^p$ with $p \in [1, \infty)$ and every $f \in \mathcal{F}$ is $\operatorname{Lip}_{\F}$-Lipschitz, then
		\begin{equation*}
			\widehat{R}_{\rho}(f) \leq \E_{\xi \sim P}[f(\xi)]+\operatorname{Lip}_{\F} \left(\rho + \frac{\alpha}{\sqrt{n}}\right)^{\frac{1}{p}}.
		\end{equation*}
	\end{proposition}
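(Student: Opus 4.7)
The plan is to mirror the proof of \Cref{th:generalization_standard} but run the key inequality in the reverse direction, reusing the same dual formulation and uniform concentration machinery. Under \Cref{ass:general_assumptions}, WDRO admits the strong duality
\[
R_{\rho,Q}(f) \;=\; \inf_{\lambda\geq 0}\Bigl\{\lambda\rho + \E_{\xi\sim Q}[\phi^f_\lambda(\xi)]\Bigr\},\qquad \phi^f_\lambda(\xi):=\sup_{\zeta\in\Xi}\{f(\zeta)-\lambda c(\xi,\zeta)\},
\]
so both $\widehat{R}_\rho(f)$ and $R_{\rho+\alpha/\sqrt n}(f)$ are infima over the same one-parameter family, and the comparison reduces to controlling the gap $\E_{\widehat P_n}[\phi^f_\lambda]-\E_P[\phi^f_\lambda]$ at a well-chosen $\lambda$.

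The main step is to pick a minimizer $\lambda^\star$ of the true dual at the inflated radius $\rho':=\rho+\alpha/\sqrt n$, so that $R_{\rho'}(f)=\lambda^\star\rho'+\E_P[\phi^f_{\lambda^\star}]$. The assumption $\rho'\leq \radiuscrit/4$ places $\rho'$ strictly below the critical threshold, which allows me to invoke the nonsmooth variational argument developed in the proof of \Cref{th:generalization_standard} (and outlined in \Cref{sec:sketch_proof}) to obtain the uniform lower bound $\lambda^\star\geq\lambdalow$. Combining weak duality for the empirical problem with the same uniform concentration used to establish \Cref{th:generalization_standard}, which yields with probability $1-\delta$
\[
\sup_{f\in\F,\,\lambda\geq\lambdalow}\bigl|\E_{\widehat P_n}[\phi^f_\lambda]-\E_P[\phi^f_\lambda]\bigr| \;\leq\; \lambdalow\,\tfrac{\alpha}{\sqrt n},
\]
I conclude
\[
\widehat{R}_\rho(f)\leq \lambda^\star\rho+\E_{\widehat P_n}[\phi^f_{\lambda^\star}] \leq \lambda^\star\rho + \E_P[\phi^f_{\lambda^\star}] + \lambda^\star\tfrac{\alpha}{\sqrt n} \;=\; R_{\rho+\alpha/\sqrt n}(f),
\]
where the second inequality uses $\lambda^\star\geq\lambdalow$ to absorb the concentration error.

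For the particular Lipschitz case, I would complement the first bound with a direct estimate on $R_{\rho'}(f)$. For any coupling $\pi$ of $P$ and $Q'$, Jensen's inequality applied to the concave map $t\mapsto t^{1/p}$ gives $\E_\pi[d(\xi,\zeta)]\leq \E_\pi[d(\xi,\zeta)^p]^{1/p}$, so $W_1(P,Q')\leq W_c(P,Q')^{1/p}$. By Kantorovich--Rubinstein duality, $|\E_{Q'}[f]-\E_P[f]|\leq \operatorname{Lip}_\F\,W_1(P,Q')\leq \operatorname{Lip}_\F\,W_c(P,Q')^{1/p}$, and maximizing over $Q'$ in the Wasserstein ball of radius $\rho'$ around $P$ yields $R_{\rho'}(f)\leq \E_P[f]+\operatorname{Lip}_\F\,(\rho')^{1/p}$. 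Chaining with the first claim finishes the proof.

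The only technical hurdle is the lower bound $\lambda^\star\geq\lambdalow$, which is a subtle statement about the nonsmooth dual function $\lambda\mapsto\lambda\rho'+\E_P[\phi^f_\lambda]$ near its minimum, and which ties the critical radius to the dual geometry uniformly in $f\in\F$. Since exactly this property is established en route to \Cref{th:generalization_standard}, I plan to reuse it as a black box here; the remainder of the proof is the routine chaining displayed above and, for the corollary, a standard transport-Lipschitz estimate.
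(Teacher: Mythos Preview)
Your strategy coincides with the paper's: exploit the dual lower bound $\lambda\geq\lambdalow$ on the \emph{true} problem at the inflated radius $\rho'=\rho+\alpha/\sqrt n\leq\radiuscrit/4$, then push the concentration estimate in the reverse direction to pass from $\E_P[\phi^f_\lambda]$ to $\E_{\widehat P_n}[\phi^f_\lambda]$; the paper works with infima directly rather than selecting a minimizer $\lambda^\star$, but this is cosmetic. One slip to correct: the uniform concentration used en route to \Cref{th:generalization_standard} is on $\psi(\mu,f,\xi)=\mu\,\phi(\mu^{-1},f,\xi)$ over $\mu\in(0,\lambdalow^{-1}]$, which yields $\bigl|\E_{\widehat P_n}[\phi^f_\lambda]-\E_P[\phi^f_\lambda]\bigr|\leq \lambda\,\alpha/\sqrt n$ for each $\lambda\geq\lambdalow$, \emph{not} the $\lambda$-uniform bound $\lambdalow\,\alpha/\sqrt n$ you wrote (a uniform-in-$\lambda$ bound on $\phi$ itself would require covering the unbounded ray $[\lambdalow,\infty)$); with the correct form, the $\lambda^\star\alpha/\sqrt n$ term is immediate and the condition $\lambda^\star\geq\lambdalow$ serves only to place $\lambda^\star$ in the domain where the $\psi$-concentration applies, not to ``absorb'' anything. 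Your Lipschitz corollary via $W_1\leq W_c^{1/p}$ and Kantorovich--Rubinstein is exactly the paper's direct Lipschitz-plus-Jensen argument in different packaging.
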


	\begin{remark}[The critical radius as a degeneracy threshold]\label{rem:rhocrit}
		The critical radius
		$\radiuscrit$ defined in \eqref{eq:crit} plays the role of a degeneracy threshold for the problem, as follows.
		We can show that if  $\rho \geq \radiuscrit$, there exists $f \in \F$ satisfying $R_\rho(f) = \max_{\xi \in \Xi} f(\xi)$. Furthermore, for $\rho \geq \radiuscrit + \frac{\alpha}{\sqrt{n}}$, with high probability, there exists $f \in \F$ such that $\widehat{R}_\rho (f) = \max_{\xi \in \Xi} f(\xi)$. In other words, if the radius is chosen too high compared to $\radiuscrit$, both generalization and excess bounds (\Cref{th:generalization_standard} and \Cref{th:excess_risk_standard}) are vacuous. A proof of this result is found in Appendix, \Cref{prop:interpretation_radiuscritWDRO_appendix}.
	\end{remark}

	\subsection{Generalization guarantees of Wasserstein robust linear models}
	\label{subsection:linearmodels}
	
	We now illustrate how our generalization guarantees from \Cref{subsec:wdro_results} apply to linear models. In this part, we assume the support of $P$ to be contained in a ball of diameter $D$ centered at $0$.

	We recover estimates similar to the ones from the study of linear models \citep{shafieezadeh2019linear}, hence showing the tightness of our approach. We consider the setting from \cite{shafieezadeh2019linear} where the parameter space is assumed to be bounded away from zero \cite[Assumption 4.5]{shafieezadeh2019linear}:
	
	\begin{assumption}[Hypothesis domain]  \label{ass:hypothesisdomain}$\F = \left\{f(\theta,\cdot) \ : \ \theta \in \Theta \right\}$, where $\Theta \subset \R^p$ is a compact subset and $c = \|\cdot - \cdot\|^2$. There exists $\omega > 0$ satisfying one of the following:
		\begin{enumerate}
			\item (Linear regression). $f(\theta,(x,y)) = (\langle \theta,x\rangle - y)^2$, $\inf_{\theta \in \Theta} \|(\theta,-1)\|^2 \geq \omega$ and $\omega \geq 1$.
			\item (Logistic regression). $f(\theta,(x,y)) = \log (1+e^{- y \langle \theta, x\rangle})$ and $\inf_{\theta \in \Theta} \|\theta\|^2 \geq \omega$.
		\end{enumerate}
		\label{ass:linear_models_domain}
	\end{assumption}

	Under this assumption, we obtain estimates for the  constants $\lambdalow$ and $\radiuscrit$.

	\begin{proposition}[Linear models dual bound and critical radius] \label{prop:linearmodels}
		
		Under \Cref{ass:linear_models_domain}, let $\Omega := \sup_{\theta \in \Theta} \|\theta\|^2$. \Cref{th:generalization_standard} and \Cref{th:excess_risk_standard} hold with  $\radiuscrit \geq D^2$ and
		\begin{enumerate}
			\item (Linear regression) $\lambdalow \geq \frac{\omega}{2}$ under \Cref{ass:linear_models_domain}.1.
			\item (Logistic regression) $\lambdalow \geq \frac{\omega}{8 \big(1 + e^{D\Omega}\big)}$ under \Cref{ass:linear_models_domain}.2.
		\end{enumerate}

	\end{proposition}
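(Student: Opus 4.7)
The strategy is to establish the three claims of the proposition---$\radiuscrit \geq D^2$ and the two lower bounds on $\lambdalow$---separately, exploiting in each case the explicit algebraic structure of the loss $f$ and the quadratic cost $c=\|\cdot-\cdot\|^2$. The critical-radius claim is a purely geometric statement about the location of $\argmax_\Xi f(\theta,\cdot)$, whereas the $\lambdalow$ claims require opening the proof sketch of \Cref{th:generalization_standard} in \Cref{sec:sketch_proof} to identify exactly which quantitative lower bound on the optimal Wasserstein dual multiplier is at stake.

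For $\radiuscrit\geq D^2$, I would observe that $f(\theta,\cdot)$ is convex in $\xi$ in both settings: it is the squared affine form $\langle\tilde\theta,\cdot\rangle^2$ with $\tilde\theta=(\theta,-1)$ in the regression case, and the log-sum-exp of the affine form $-y\langle\theta,x\rangle$ in the logistic case. The maximum over the convex sample space $\Xi$ is therefore attained at extreme points aligned with the direction $\pm\tilde\theta$ (respectively $-y\theta$). Combined with the hypothesis that $\operatorname{supp}(P)$ is contained in the centered ball of diameter $D$ while $\Xi$ extends further in those privileged directions, a direct geometric estimate produces $\|\xi-\zeta\|^2\geq D^2$ for every $\xi\in\operatorname{supp}(P)$ and every $\zeta\in\argmax_\Xi f(\theta,\cdot)$; taking expectation in $\xi$ and infimum over $f\in\F$ yields the claim.

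For $\lambdalow$, I would return to the proof sketch of \Cref{th:generalization_standard}: $\lambdalow$ plays the role of a uniform lower bound on the optimal Wasserstein dual multiplier, ensuring that the inner concave program $\sup_\zeta\{f(\zeta)-\lambda c(\xi,\zeta)\}$ is well-posed and has a suitably controlled $\lambda$-derivative. In the linear-regression case the inner supremum admits a closed-form (Sherman--Morrison-type) expression valid whenever $\lambda>\|\tilde\theta\|^2$, and combining this with \Cref{ass:linear_models_domain}.1 (which forces $\|\tilde\theta\|^2\geq\omega\geq 1$) transports the validity of the dual analysis down to any $\lambda\geq\omega/2$, yielding $\lambdalow\geq\omega/2$. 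In the logistic case, there is no closed form, but the Hessian of $f$ in $\xi$ equals $\sigmoid(u)(1-\sigmoid(u))\,\theta\theta^\top$ with $u=-y\langle\theta,x\rangle$, and on the bounded region carved out by $\operatorname{supp}(P)$ and $\Theta$ one has a uniform lower bound of the order $1/(1+e^{D\Omega})$ on $\sigmoid(u)(1-\sigmoid(u))$; propagating this curvature estimate through the dual analysis and using \Cref{ass:linear_models_domain}.2 ($\|\theta\|^2\geq\omega$) then delivers $\lambdalow\geq\omega/\bigl(8(1+e^{D\Omega})\bigr)$.

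The main obstacle is the logistic step: unlike the regression case, the regularity of the inner sup cannot be read off a closed form, so one must translate a uniform curvature lower bound on $f$ into the desired uniform lower bound on the optimal dual multiplier, all while keeping track of the multiplicative constants prescribed by \Cref{ass:linear_models_domain}.2 and by the diameter of $\operatorname{supp}(P)$.
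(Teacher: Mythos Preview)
Your argument for $\radiuscrit\geq D^2$ is essentially the paper's: convexity of $f(\theta,\cdot)$ pushes maximizers to the boundary of $\Xi$ (taken as the ball of diameter $3D$), so every $\zeta\in\argmax_\Xi f$ satisfies $\|\xi-\zeta\|^2\geq D^2$ for $\xi\in\operatorname{supp}(P)$.

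However, your $\lambdalow$ arguments miss the actual mechanism, and in the linear case the direction is backwards. Recall from \Cref{prop:continuity_radius_sketch} that $\lambdalow$ is defined through $\radiusmax$: it is any value such that $\radiusmax(\lambda)\geq\radiuscrit/4$ for all $\lambda\in[0,2\lambdalow]$. Since $\radiusmax(\lambda)=\inf_f\E_P[c(\xi,\zeta^*(\lambda,f,\xi))]$ with $\zeta^*\in\argmax_\zeta\{f(\zeta)-\lambda c(\xi,\zeta)\}$, bounding $\lambdalow$ amounts to showing that the maximizers of $f-\lambda c$ stay on $\partial\Xi$ for all \emph{small} $\lambda$. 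In the linear case, the paper observes that $f(\theta,\zeta)-\lambda\|\xi-\zeta\|^2$ is a quadratic with leading matrix $\tilde\theta\tilde\theta^\top-\lambda I$; for $\lambda\leq\omega\leq\|\tilde\theta\|^2$ this matrix has a positive eigenvalue, so the maximum over the ball $\Xi$ is attained on $\partial\Xi$, giving $\radiusmax(\lambda)\geq D^2$ for $\lambda\in[0,\omega]$. Combined with the trivial upper bound $\radiuscrit\leq(3D/2+D/2)^2=4D^2$, this yields $D^2\geq\radiuscrit/4$ and hence $2\lambdalow\geq\omega$. Your Sherman--Morrison remark concerns the \emph{concave} regime $\lambda>\|\tilde\theta\|^2$, where the unconstrained maximizer has a closed form; but this is the wrong end of the $\lambda$-axis and contributes nothing to $\lambdalow$.

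For logistic regression, the ingredients you list are correct (Hessian $\sigmoid'(u)\,\theta\theta^\top$ with $\sigmoid'(u)\geq 1/\bigl(2(1+e^{D\Omega})\bigr)$ on $\Xi$), but the conclusion again comes from the boundary argument rather than an abstract ``propagation through the dual analysis'': one writes $\zeta=s\theta+v$ and checks that the second derivative in $s$ of $f-\lambda c$ stays positive as long as $\lambda<\omega/\bigl(4(1+e^{D\Omega})\bigr)$, forcing maximizers to $\partial\Xi$ and yielding $\radiusmax(\lambda)\geq D^2$ on that interval; the same comparison with $\radiuscrit/4\leq D^2$ then gives $2\lambdalow\geq\omega/\bigl(4(1+e^{D\Omega})\bigr)$.
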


	These specific results show that we retrieve the constants from \cite{shafieezadeh2019linear}, for normalized data in the case of logistic regression. In particular, our constant $\alpha$ is proportional to $1/\omega^2$ for the linear regression. Remark that the tails parameters of $f(\xi)$ and $\xi \sim P$ from \cite{shafieezadeh2019linear} correspond in our case to $||\mathcal{F}||_\infty$ and $D$ respectively, and Dudley's constant is proportional to $\sqrt{p}$. In the case of linear regression, the dual lower bound is directly related to the parameter bound $\omega$ from \cite{shafieezadeh2019linear}. In more advanced settings (e.g. deep learning), the positivity of $\lambdalow$ can be seen as an implicit definition of the hypothesis bound $\omega$.
	
	\subsection{Regularized Wasserstein robust models}\label{sec:reg}
	
	Part of the success of optimal transport in machine learning is the use of regularization, and specifically entropic regularization, opening the way to nice properties and efficient computational schemes \citep{cuturi2013lightspeed,peyre2019comput}. Recall that the entropy-regularized optimal transport cost writes, for a reference coupling $\pi_0 \in \mathcal{P}(\Xi\!\times\!\Xi)$ as 
	\begin{equation}\label{eq:Wtau}
		W^{\tau}_c(P,Q) = \min_{\substack{\pi \in \PXixi \\ [\pi]_1 = P, [\pi_2] = Q}} \accol{\E_\pi[c] + \tau \KL(\pi \| \pi_0)}
	\end{equation}
	where  $\KL(\cdot\|\pi_0)$ is the Kullback-Leibler  divergence 
	w.r.t.\;$\pi_0$:
	\begin{equation}
		\label{eq:kldivergence}
		\operatorname{KL}(\pi \| \pi_0) = 
		\begin{cases} 
			\int_{\Xi \times \Xi} \log \frac{\di \pi}{\di \pi_0} \, \di\pi & \text{when } \pi \ll \pi_0 \\
			\infty & \text{otherwise.}
		\end{cases}
	\end{equation}
	Given the definition of $\operatorname{KL}(\cdot \| \pi_0)$, note that the minimum \eqref{eq:Wtau} is well-defined, attained at some coupling $\pi^{P,Q} \ll \pi_0$, see e.g.\;\cite{peyre2019comput}. Regularization have been recently studied in the context of WDRO:  \cite{wang2023sinkhorn} introduces an entropic regularization in constraints for computational interests,  \cite{azizian2023exact} considers an entropic regularization in the objective for generalization, 
	and \cite{azizian2023regularization} studies a general regularization in both constraints and objective. 
	
	Following the most general case \cite{azizian2023regularization} we consider
	the robust risk with double regularization 
	\begin{equation*}
		R_{\rho, Q}^{\tau,\epsilon}(f) := \!\!\!\sup_{\substack{\pi \in \mathcal{P}(\Xi \times \Xi), ~ [\pi]_1 = Q \\ \E_{\pi}[c] + \tau \operatorname{KL}(\pi \| \pi_0) \leq \rho}}\!\!\! \accol{\E_{[\pi]_2}[f] - \epsilon \KL(\pi\|\pi_0)}
	\end{equation*}
	with two parameters $\epsilon > 0$ and $\tau \geq 0$. We introduce the conditional moment\footnote{E.g.,
		if $c(\xi,\zeta) = \frac{1}{2}\|\xi - \zeta\|^2$ and $\pi_0(\cdot|\xi)$ is a truncated Gaussian $\pi_0(\cdot | \xi) \propto e^{-\frac{\|\cdot- \xi\|^2}{2 \sigma^2}} \mathds{1}_{\Xi}$, we have $m_c \propto \sigma^2$. 
	} of $\pi_0$
	\begin{equation*}
		m_c := \max_{\xi \in \Xi} \Ezetaxi[c(\xi,\zeta)], 
	\end{equation*}
	and the \emph{regularized critical radius}
	\begin{align}
		\label{eq:reg_crit}
		\radiuscrit^{\tau,\epsilon} := \inf_{f \in \F} \E_{\xi \sim P} \left[  \E_{\zeta \sim \pi_0^{f/\epsilon}(\cdot | \xi)}\left[\frac{\tau}{\epsilon} f(\zeta) + c(\xi,\zeta)\right] - \tau \log \Ezetaxi \left[e^{\frac{f(\zeta)}{\epsilon}}\right] \right],
	\end{align}
	where $\di \pi_0^{f / \epsilon}(\cdot | \xi) \propto e^{f / \epsilon} \di \pi_0(\cdot | \xi)$. In this setting, the generalization guarantee states as follows.
	\begin{theorem}[Generalization for double regularization] \label{th:generalization_regularized} Under \Cref{ass:general_assumptions}, there exist $\alpha^{\tau,\epsilon} > 0$ and $\beta^{\tau,\epsilon} > 0$ depending on $\mathcal{F}, \Xi, c, \epsilon, \tau$ and $\delta$, such that if $\radiuscrit^{\tau,\epsilon} > 4m_c$, when $n > \frac{16(\alpha^{\tau,\epsilon} + \beta^{\tau,\epsilon})^2}{(\radiuscrit^{\tau,\epsilon} - 4m_c)^2}$ and $\rho > \max \left\{m_c, \frac{\alpha^{\tau,\epsilon}}{\sqrt{n}}\right\}$, we have with probability at least $1 - \delta$,  for all $Q \in \PXi$ such that $W^{\tau}_c(P,Q) \leq \rho$,
		\begin{equation*}
			\widehat{R}^{\tau,\epsilon}_{\rho}(f) \geq \E_{\zeta \sim Q}[f(\zeta)] - \epsilon \KL(\pi^{P,Q}\| \pi_0) \qquad \text{for all }  f \in \F,
		\end{equation*}
		where $\pi^{P,Q}$ is the optimal coupling 
		in \eqref{eq:Wtau}.\footnote{Due to the definition of the Kullback-Leibler  divergence \eqref{eq:kldivergence}, we necessarily have $\pi^{P,Q} \ll \pi_0$.}
	\end{theorem}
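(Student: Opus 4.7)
The plan is to extend the proof of \Cref{th:generalization_standard} to the doubly-regularized setting. The key difference is that the entropic regularization replaces the nonsmooth Moreau-envelope dual of the unregularized case with a smooth, strictly convex dual potential, so I can follow the same four-step blueprint as in \Cref{sec:sketch_proof} (strong duality, two-sided bounds on the dual multiplier, Dudley-chaining concentration, and combination) without needing the advanced variational-analysis tools.

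I would first invoke Fenchel--Rockafellar duality to rewrite, for any $Q \in \PXi$,
\begin{equation*}
R^{\tau,\epsilon}_{\rho,Q}(f) \;=\; \inf_{\lambda\geq 0}\psi_Q(\lambda,f), \qquad \psi_Q(\lambda,f) := \lambda\rho + \E_{\xi\sim Q}\!\left[(\epsilon+\lambda\tau)\log\Ezetaxi\!\left[\expfzetalambdaepsilon\right]\right],
\end{equation*}
where $\psi_Q(\cdot,f)$ is $C^1$ and strictly convex on $[0,\infty)$ thanks to strict convexity of $\KL(\cdot\|\pi_0)$. A direct differentiation at $\lambda=0$ gives $\partial_\lambda\psi_P(0,f) = \rho - \rho_f^{\tau,\epsilon}$, where $\rho_f^{\tau,\epsilon}$ is the integrand in the definition \eqref{eq:reg_crit} of $\radiuscrit^{\tau,\epsilon}$; hence the assumption $\rho < \radiuscrit^{\tau,\epsilon}$ forces the population minimizer $\lambda^*_{P,f}$ to be strictly positive for every $f$. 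Combining continuity of $f\mapsto\lambda^*_{P,f}$ (Berge maximum theorem, exploiting the strict convexity in $\lambda$) with the compactness of $(\F,\|\cdot\|_\infty)$ then yields uniform bounds $0<\lambda^{\tau,\epsilon}_{\rm low}\leq\lambda^*_{P,f}\leq\lambda^{\tau,\epsilon}_{\rm up}<\infty$, and the buffer $\radiuscrit^{\tau,\epsilon}>4m_c$ is what guarantees these bounds still trap the empirical minimizer after the transfer from $P$ to $\widehat P_n$.

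Setting $h_\lambda(f)(\xi) := (\epsilon+\lambda\tau)\log\Ezetaxi\!\left[e^{(f(\zeta)-\lambda c(\xi,\zeta))/(\epsilon+\lambda\tau)}\right]$, the collection $\{h_\lambda(f)\colon \lambda\in[\lambda^{\tau,\epsilon}_{\rm low},\lambda^{\tau,\epsilon}_{\rm up}],\, f\in\F\}$ is uniformly bounded and jointly Lipschitz in $(\lambda,f)$ via log-sum-exp smoothing, with packing number inherited from $\mathcal{I}_\F$ augmented by a mild $\log(\lambda^{\tau,\epsilon}_{\rm up}/\lambda^{\tau,\epsilon}_{\rm low})$ term. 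Classical symmetrization plus Dudley's entropy integral --- identical in spirit to the concentration lemma underlying \Cref{th:generalization_standard} --- then produce constants $\alpha^{\tau,\epsilon},\beta^{\tau,\epsilon}$ such that, on an event of probability at least $1-\delta$, the empirical and population dual objectives differ by at most $\lambda^{\tau,\epsilon}_{\rm low}\alpha^{\tau,\epsilon}/\sqrt n$ uniformly over the class. Absorbing this fluctuation into the $\lambda\rho$ term using $\lambda\geq\lambda^{\tau,\epsilon}_{\rm low}$ and taking the infimum in $\lambda$ yields $\widehat R^{\tau,\epsilon}_\rho(f)\geq R^{\tau,\epsilon}_{\rho-\alpha^{\tau,\epsilon}/\sqrt n,\,P}(f)$. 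For any $Q$ with $W^\tau_c(P,Q)\leq\rho$, the optimal coupling $\pi^{P,Q}$ is primal-feasible for $R^{\tau,\epsilon}_{\rho,P}$; the slack $\rho>\max\{m_c,\alpha^{\tau,\epsilon}/\sqrt n\}$ together with the $m_c$-buffer propagating from the critical-radius gap guarantees feasibility survives at the reduced radius, so plugging $\pi^{P,Q}$ into the primal closes the chain $\widehat R^{\tau,\epsilon}_\rho(f)\geq \E_{\zeta\sim Q}[f(\zeta)]-\epsilon\KL(\pi^{P,Q}\|\pi_0)$.

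The main obstacle I expect is the derivation of the uniform positive lower bound $\lambda^{\tau,\epsilon}_{\rm low}>0$. Although smoothness of $\psi_P$ already sidesteps the variational-analysis difficulty of \Cref{th:generalization_standard}, the minimizer $\lambda^*_{P,f}$ couples to $f$ through the Gibbs-type measure $\pi_0^{f/\epsilon}$, and extracting a constant valid uniformly over the entropy-compact class $\F$ while being large enough to absorb the $m_c$-scale fluctuations --- whence the assumption $\radiuscrit^{\tau,\epsilon}>4m_c$ --- is the technical crux that ultimately determines the explicit scaling of $\alpha^{\tau,\epsilon}$ and $\beta^{\tau,\epsilon}$ in terms of $\tau,\epsilon,\mathcal{I}_\F,\|\F\|_\infty$ and the gap $\radiuscrit^{\tau,\epsilon}-4m_c$.
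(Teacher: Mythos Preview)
Your overall four-step blueprint (duality, two-sided dual bounds, Dudley concentration on the compact $\lambda$-interval, combination) matches the paper, and the key inequality $\widehat R^{\tau,\epsilon}_\rho(f)\geq R^{\tau,\epsilon}_{\rho-\alpha^{\tau,\epsilon}/\sqrt n,\,P}(f)$ is exactly the target. However, you mislocate the main technical difficulty and, as a result, misattribute the roles of the hypotheses $\rho>m_c$ and $\radiuscrit^{\tau,\epsilon}>4m_c$.

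You write that smoothness of $\psi_P$ ``sidesteps the variational-analysis difficulty'' and identify the lower bound $\lambda^{\tau,\epsilon}_{\rm low}$ as the crux. In the paper it is the opposite: the lower bound is \emph{easier} here (the regularized maximal-radius function is Lipschitz, giving an explicit $\lambdalow^{\tau,\epsilon}$), while the \emph{upper} bound is the new obstacle specific to regularization. The reason is that $(\mu,f)\mapsto \mu\,\phi^{\tau,\epsilon}(\mu^{-1},f,\xi)$ fails to be Lipschitz as $\mu\to 0$ (equivalently, as $\lambda\to\infty$); the paper exhibits a concrete counterexample. Without an a~priori upper bound on $\lambda$, the concentration step cannot be run uniformly. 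The paper resolves this not via Berge/compactness on the population problem, but by a direct derivative estimate showing that for \emph{any} distribution $Q$ and any $f$, the dual objective has nonnegative slope at $\lambdaup=\tfrac{2\maxF}{\rho-m_c}$, so the infimum may be restricted to $[0,\lambdaup]$ deterministically. This is precisely where $\rho>m_c$ enters: it makes $\lambdaup$ finite and ensures the Slater-type condition for strong duality.

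Your Berge argument yields $\lambda^{\tau,\epsilon}_{\rm up}$ only for the population minimizer $\lambda^*_{P,f}$; you then assert that ``the buffer $\radiuscrit^{\tau,\epsilon}>4m_c$ guarantees these bounds still trap the empirical minimizer,'' but no mechanism is given, and this is not how the paper uses that hypothesis. In the paper, $\radiuscrit^{\tau,\epsilon}>4m_c$ (together with the lower bound on $n$) simply makes the admissible interval $\max\{m_c,\alpha^{\tau,\epsilon}/\sqrt n\}<\rho\leq \tfrac{\radiuscrit^{\tau,\epsilon}}{4}-\tfrac{\beta^{\tau,\epsilon}}{\sqrt n}$ non-empty; it has nothing to do with trapping the empirical dual. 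Likewise, your reading of $\rho>m_c$ as ensuring ``feasibility survives at the reduced radius'' for $\pi^{P,Q}$ is off: once $W^\tau_c(P,Q)\leq\rho$, the coupling $\pi^{P,Q}$ is directly feasible for the population robust risk, and the final inequality follows without any $m_c$-slack argument.
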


	This result is similar to the one of \Cref{th:generalization_standard} and is also similar to the only other generalization result existing for regularized WDRO \citep{azizian2023exact}. Let us explicit below the main differences.

	Unlike Wasserstein robust models (\Cref{th:generalization_standard}), regularization leads to an {\em inexact} generalization guarantee, where the regularized empirical robust risk bounds a proxy for the true risk $\E_P[f]$. This is in line with the regularization in optimal transport that induces a bias in the Wasserstein metric, preventing $W^\tau_c(P, P)$ from being null. In particular, given an arbitrary $\tau > 0$, $W_c^{\tau}(P,P)$ may not be lower than $\rho$.
	
	The coefficients $\alpha^{\tau,\epsilon}$ and $\beta^{\tau,\epsilon}$ exhibit similar relations with $\lambdalow^{\tau,\epsilon}$, $\maxF$, and $\mathcal{I}_\F$ to their counterparts $\alpha$ and $\beta$ from \Cref{th:generalization_standard}. Their complete expressions can be found in the extended version of \Cref{th:generalization_regularized} (in Appendix, \Cref{corr:generalization_regularized}). 
	In particular, the expression of $\alpha^{\tau,\epsilon}$ suggests $m_c$, $\epsilon$, $\tau$ and $\rho$ should be of comparable order. Compared to the standard setting, we have an estimate of the lower bound $\lambdalow^{\tau,\epsilon}$ (\Cref{prop:continuity_radius_regularized}) showing dependence on the loss family: ${\lambdalow^{\tau,\epsilon}}  = O \big( e^{-\frac{\maxF}{\epsilon}} \big)$.

	Compared to \cite{azizian2023exact}, we underline that our result covers the double regularization case. Moreover, it is valid for an arbitrary $\pi_0$ whereas the one from \cite{azizian2023exact} relies on the specific form of $\pi_0$ involving a Gaussian term. Our result is thus more flexible, allowing freedom in the choice of $\pi_0$. 
	
	As for the standard case (\Cref{th:excess_risk_standard}), we obtain an excess risk bound. The main difference in this setting is that we lose the explicit control of the true risk. This is mainly due to the inexactness brought by regularization.
	
	\begin{proposition}[Excess risk for doubly regularized robust models] \label{th:excess_risk_regularization}
		Let $\alpha^{\tau,\epsilon}$ be given by \Cref{th:generalization_regularized}. Under \Cref{ass:general_assumptions}, if $\radiuscrit^{\tau,\epsilon} > 4 m_c$, $n > \frac{16 {\alpha^{\tau,\epsilon}}^2}{(\radiuscrit^{\tau,\epsilon} - 4 m_c)^2}$ and $m_c < \rho \leq \frac{\radiuscrit^{\tau,\epsilon}}{4} - \frac{\alpha^{\tau,\epsilon}}{\sqrt{n}}$, then with probability at least $1 - \delta$,
		\begin{equation*}
			\widehat{R}_{\rho}^{\tau,\epsilon}(f) \leq R_{\rho + \frac{\alpha^{\tau,\epsilon}}{\sqrt{n}}}^{\tau,\epsilon}(f) \qquad \text{for all $f \in \F$}.
		\end{equation*}
	\end{proposition}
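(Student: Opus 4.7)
The argument mirrors that of Proposition~\ref{th:excess_risk_standard} but uses the dual of the doubly regularized robust risk that underlies Theorem~\ref{th:generalization_regularized}. Introducing the cumulant functional
\begin{equation*}
\Phi_\lambda(f,Q):=(\epsilon+\lambda\tau)\,\E_{\xi\sim Q}\bracket{\log \Ezetaxi\bracket{\expfzetalambdaepsilon}},
\end{equation*}
strong duality gives $\widehat{R}^{\tau,\epsilon}_\rho(f)=\inf_{\lambda\geq 0}\accol{\lambda\rho+\Phi_\lambda(f,\widehat{P}_n)}$ and, for any $\Delta\geq 0$, $R^{\tau,\epsilon}_{\rho+\Delta}(f)=\inf_{\lambda\geq 0}\accol{\lambda(\rho+\Delta)+\Phi_\lambda(f,P)}$. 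Plugging the empirical optimizer $\hat\lambda(f)$ as a feasible candidate into the population dual yields
\begin{equation*}
\widehat{R}^{\tau,\epsilon}_\rho(f)-R^{\tau,\epsilon}_{\rho+\Delta}(f)\leq \Phi_{\hat\lambda(f)}(f,\widehat{P}_n)-\Phi_{\hat\lambda(f)}(f,P)-\hat\lambda(f)\,\Delta,
\end{equation*}
so the task reduces to dominating the $\Phi$-gap by $\hat\lambda(f)\,\Delta$ for the choice $\Delta=\alpha^{\tau,\epsilon}/\sqrt{n}$.

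The plan is threefold. First, I would reuse the sensitivity machinery behind Theorem~\ref{th:generalization_regularized} to show that the hypothesis $\rho \leq \radiuscrit^{\tau,\epsilon}/4-\alpha^{\tau,\epsilon}/\sqrt{n}$ combined with the sample-size lower bound forces $\hat\lambda(f)\geq \lambdalow^{\tau,\epsilon}>0$ uniformly in $f\in\F$ on a high-probability event. The factor $1/4$ and the gap $\radiuscrit^{\tau,\epsilon}-4m_c$ enter here, as one needs both the regularization bias $m_c$ to be strictly dominated by $\rho$ and the empirical critical radius to approximate $\radiuscrit^{\tau,\epsilon}$ uniformly over $\F$. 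Second, by compactness of $\F$ in $\|\cdot\|_\infty$ and smoothness of the log-moment-generating function, the map $(f,\lambda)\mapsto\Phi_\lambda(f,\cdot)$ is bounded and Lipschitz on $\F\times[\lambdalow^{\tau,\epsilon},\lambdaup^{\tau,\epsilon}]$, where an upper bound on $\hat\lambda(f)$ also follows from the compactness of $\F$. Dudley chaining driven by the entropy integral $\mathcal{I}_\F$, combined with a McDiarmid-type bounded-differences argument in $\lambda$, then delivers
\begin{equation*}
\sup_{f\in\F}\bigl|\Phi_{\hat\lambda(f)}(f,\widehat{P}_n)-\Phi_{\hat\lambda(f)}(f,P)\bigr|\leq \frac{\lambdalow^{\tau,\epsilon}\,\alpha^{\tau,\epsilon}}{\sqrt{n}}
\end{equation*}
with probability at least $1-\delta$, for exactly the constant $\alpha^{\tau,\epsilon}$ produced by Theorem~\ref{th:generalization_regularized}. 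Third, combining these three displays makes the right-hand side of the first inequality nonpositive, which is the claim.

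The main obstacle is the uniform lower bound $\hat\lambda(f)\geq \lambdalow^{\tau,\epsilon}$: one must control the empirical KKT multiplier simultaneously for \emph{all} $f\in\F$ and transfer information between the empirical and population critical radii. Because the entropic regularization introduces a strictly positive bias $m_c$, and because $\Phi_\lambda$ is not strongly convex in $\lambda$ in general, this step requires both the strict gap $\radiuscrit^{\tau,\epsilon}>4m_c$ and the threshold $n>16(\alpha^{\tau,\epsilon})^2/(\radiuscrit^{\tau,\epsilon}-4m_c)^2$ to guarantee with high probability that $\hat\lambda(f)$ stays in a compact subinterval of $(0,\infty)$ on which the chaining and concentration estimates can be safely applied.
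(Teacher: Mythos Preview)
Your strategy reverses the direction the paper takes, and this reversal creates a real gap under the stated hypotheses. You plug the \emph{empirical} dual minimizer $\hat\lambda(f)$ into the population dual, which obliges you to establish $\hat\lambda(f)\geq\lambdalow^{\tau,\epsilon}$ uniformly over $\F$ on a high-probability event. The paper does the opposite: it starts from the \emph{population} robust risk $R^{\tau,\epsilon}_{\rho'}(f)$ at $\rho'=\rho+\alpha^{\tau,\epsilon}/\sqrt{n}$, where the restriction $\lambda\geq\lambdalow^{\tau,\epsilon}$ in the dual is a purely \emph{deterministic} consequence of $\rho'\leq\radiuscrit^{\tau,\epsilon}/4$ (this is \Cref{prop:lambda_star}, the very definition of $\lambdalow^{\tau,\epsilon}$ via $\radiusmax^{\tau,\epsilon}$). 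From there a single concentration event on $\psi^{\tau,\epsilon}$ --- the one that produces $\alpha^{\tau,\epsilon}$ --- lets one swap $P$ for $\widehat P_n$ inside the infimum and then relax $\lambda\geq\lambdalow^{\tau,\epsilon}$ to $\lambda\geq 0$, giving $R^{\tau,\epsilon}_{\rho'}(f)\geq\widehat R^{\tau,\epsilon}_{\rho}(f)$ directly. This is exactly the computation of \Cref{th:excess_risk_WDROappendix}, transported verbatim to the regularized dual via \Cref{prop:duality_regularized}.

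The gap in your route is concrete. The high-probability empirical lower bound $\hat\lambda(f)\geq\lambdalow^{\tau,\epsilon}$ is precisely \Cref{prop:bound_dual_variable}, and that result requires $\rho\leq\radiuscrit^{\tau,\epsilon}/4-C(\delta)/\sqrt{n}$ with $C(\delta)=\beta^{\tau,\epsilon}$, not $B(\delta)=\alpha^{\tau,\epsilon}$. The proposition you are proving assumes only $\rho\leq\radiuscrit^{\tau,\epsilon}/4-\alpha^{\tau,\epsilon}/\sqrt{n}$, and there is no general inequality $\alpha^{\tau,\epsilon}\geq\beta^{\tau,\epsilon}$ (compare the $\sqrt{2\log(2/\delta)}$ versus $\sqrt{2\log(4/\delta)}$ tails and the different prefactors in \Cref{corr:generalization_regularized}). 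So your ``main obstacle'' is not merely technical: under the given hypotheses it cannot be discharged with the available machinery. The paper's reversal sidesteps this entirely because the population-side dual lower bound costs no probability and needs only the deterministic condition $\rho+\alpha^{\tau,\epsilon}/\sqrt{n}\leq\radiuscrit^{\tau,\epsilon}/4$, which is exactly the assumed upper bound on $\rho$. A small side correction: the concentration you want is $\bigl|\Phi_{\hat\lambda(f)}(f,\widehat P_n)-\Phi_{\hat\lambda(f)}(f,P)\bigr|\leq \hat\lambda(f)\,\alpha^{\tau,\epsilon}/\sqrt{n}$, not $\lambdalow^{\tau,\epsilon}\,\alpha^{\tau,\epsilon}/\sqrt{n}$, since the uniform control is on $\psi^{\tau,\epsilon}(\mu,f,\cdot)=\mu\,\Phi_{\mu^{-1}}(f,\cdot)$; the $\hat\lambda(f)$ factor is what cancels against $-\hat\lambda(f)\Delta$.
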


	\subsection{Limitations and potential extensions}
	\label{sec:limitations}
	
	In the previous sections, we presented our results and their universality, to underline that they are widely applicable in machine learning. In this section, we discuss three relative limitations of our results: the assumption of the compactness of sample space, the assumption of finite Dudley entropy, and the expression of constants. 
	
	Compactness of the sample space $\Xi$ (\Cref{ass:general_assumptions}.1) is essential to control worst-case distributions of the robust objective \eqref{eq:WDRO0}, given our level of generality. This assumption is in line with some recent studies \cite{azizian2023exact,blanchet2023statistical}; see also \cite{gao2022variationregularization} which uses bounded growth assumptions. Such assumptions are reasonable, as standard statistical frameworks involving Gaussian or heavy tail distributions could be covered by truncating. 
	
	In our study, considering loss families with finite Dudley's entropy (\Cref{ass:general_assumptions}.3) is crucial to limit the dependence on the sample dimension. This assumption is satisfied for Lipschitz parametric losses with bounded parameter space, and it is not clear if a dimension-free generalization could be established for non-parametric losses. For instance, \cite{zeng2022generalization} dealing with non-parametric losses, exhibits generalization guarantees with exponential dependence in the dimension.
	
	Finally, regarding the generalization constants, we could improve them in several ways. For instance, leveraging the structure of specific models would allow to obtain estimates of the constants $\lambdalow$, $\radiuscrit$; this is what we did for the linear models in \Cref{subsection:linearmodels}. Taking into account the optimization procedure which selects a small set of solutions could also be interesting in order to have sharper constants on the class $\F$.

	\section{Sketch of the proof}
	\label{sec:sketch_proof}
	
	This section presents our strategy to prove the generalization results of Section\;\ref{section:contributions}\;(Theorems\;\ref{th:generalization_standard} and \ref{th:generalization_regularized}). The strength of our approach is to use flexible nonsmooth analysis arguments, able to cover the general situation of arbitrary (continuous) cost and objective functions. We present the main approach in \Cref{subsec:main_approach}, based on a duality formula and a lower bound $\lambdalow$ on the dual variable. In \Cref{subsection:lowerbound}, we focus on the latter and shed lights on its role. Finally, we explain in \Cref{subsection:extension_regularized} the extension to regularized models.

	\subsection{Main approach}
	\label{subsec:main_approach}
	
	Compared to the original formulation \eqref{eq:wdro0}, the dual representation significantly diminishes the problem's degrees of freedom, and is usually the starting point of most studies of WDRO; see e.g.\;\cite{kuhn2019wasserstein,azizian2023regularization}. Given any distribution $Q \in \PXi$ and radius $\rho > 0$, it holds
	\begin{equation*}
		R_{\rho,Q}(f) = \inf_{\lambda \geq 0} \accol{\lambda \rho + \E_{\xi \sim Q}[\phi(\lambda, f, \xi)]},
	\end{equation*}
	where the \emph{dual generator} $\phi$ is a convex function with respect to $\lambda$, and Lipschitz continuous with respect to $f$. For Wasserstein robust models, $\phi$ has the expression (see e.g.\;\cite{blanchet2019quantifying})
	\begin{equation*}
		\phi(\lambda, f, \xi) = \sup_{\zeta \in \Xi} \accol{f(\zeta) - \lambda c(\xi, \zeta)}.
	\end{equation*}
	Observe that $\phi$ is naturally convex in $\lambda$, but also nonsmooth. The originality of our approach is to build on this nonsmoothness by using a rationale of nonsmooth analysis, which allows us to cover the case of other dual generators as for the regularized versions; see next section.
	
	Let us then outline the main steps to establish \Cref{th:generalization_standard}:

	\begin{enumerate}
		\item[\textbf{1.}] \textbf{Dual lower bound.} Given $\beta > 0$ appearing in \Cref{th:generalization_standard}, We establish the existence of a dual lower bound $\lambdalow > 0$, which holds with high probability for all $f \in \F$, for a small enough radius $\rho \leq \frac{\radiuscrit}{4}-\frac{\beta}{\sqrt{n}}$:
		\begin{equation*}
			\widehat{R}_\rho(f) = \inf_{\lambda \in [\lambdalow, \infty)} \accol{\lambda \rho + \E_{\xi \sim \widehat{P}_n}[\phi(\lambda,f,\xi)]}.
		\end{equation*}
		This is done in \Cref{appendix:dualboundempirical}.

		\item[\textbf{2.}] \textbf{Concentration of the radius.} Let us write for all $\lambda \geq \lambdalow$ and $f \in \F$ 
		\begin{align}   
			\label{eq:sketch_ineq_dual}
			\lambda \rho + \E_{\widehat{P}_n} [\phi(\lambda,f)] & \geq \lambda \parx{\rho  - \!\parx{\frac{\E_P[\phi(\lambda,f)]- \E_{\widehat{P}_n}[\phi(\lambda,f)]}{\lambda}}\!\!}\!+ \E_P[\phi(\lambda,f)] \nonumber \\ 
			& \geq  \lambda (\rho- \alpha_n) + \E_P[\phi(\lambda,f)],
		\end{align}
		where we define the uniform gap $\alpha_n$ by
		\begin{align}
			\label{eq:alpha_n}
			\alpha_n = \sup  \left\{  \E_{P}[\mu \phi(\mu^{-1}\!,\!f)] - \E_{\widehat{P}_n}[\mu \phi(\mu^{-1}\!,\!f)]  \right. : \left. (\mu,f) \in (0, \lambdalow^{-1}]\!\times\!\F \right\}.
		\end{align}
		This quantity can be bounded with high probability by $\frac{\alpha}{\sqrt{n}}$ -- where $\alpha > 0$ is the constant from \Cref{th:generalization_standard}. To obtain such a bound, we rely on known uniform concentration theorems for Lipschitz functions \cite{boucheron}. Concentration constants are computed in \Cref{sec:concentrationconstants}.
		
		\item[\textbf{3.}] \textbf{Generalization bound.} We can now obtain the result. Taking the infimum over $\lambda \geq \lambdalow$ in \eqref{eq:sketch_ineq_dual}, we obtain with high probability for all $f \in \F$,
		\begin{equation*}
			\widehat{R}_{\rho}(f) \geq R_{\rho - \alpha / \sqrt{n}}(f) \geq \E_{\xi \sim P}[f(\xi)],
		\end{equation*}
		whenever $ \frac{\alpha}{\sqrt{n}} < \rho \leq \frac{\radiuscrit}{4} - \frac{\beta}{\sqrt{n}}$. This interval is nonempty if $n > 16 (\alpha + \beta)^2 / \radiuscrit^2$. Since $\widehat{R}_{\rho}(f)$ is non-decreasing with respect to $\rho$, we have $\widehat{R}_{\rho}(f) \geq \E_{\xi \sim P}[f(\xi)]$ for any $\rho > \frac{\alpha}{\sqrt{n}}$ as long as $n > 16 (\alpha + \beta)^2 / \radiuscrit^2$.

	\end{enumerate}

	\subsection{Definition of the lower bound}
	\label{subsection:lowerbound}
	
	\begin{wrapfigure}[13]{r}{0.435\textwidth}
		\vspace{-1cm}
		\hspace{-1.5cm}
		\includegraphics[scale=1]{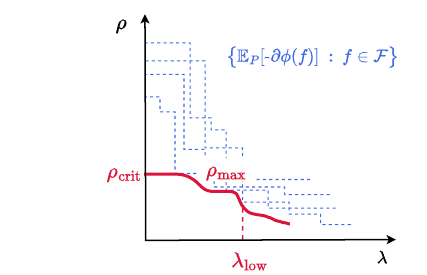}
		\caption{\footnotesize A central object of our analysis: the maximal radius $\radiusmax$, defined from the lower envelope of derivatives of $\phi$. }
		\label{fig:gamma}
	\end{wrapfigure}
	$\lambdalow$ defines a dual lower bound on the true risk $R_\rho(f)$, making it independent from samples randomness.
	In our proof, we then show that this lower bound holds with high probability on the empirical robust risk $\widehat{R}_\rho(f)$ using the convexity of $\phi$. This is done in \Cref{prop:bound_dual_variable} in Appendix.
	
	We now explain the definition of $\lambdalow$ more precisely. We consider the \emph{maximal radius} function
	\begin{equation*}
		\radiusmax(\lambda) = \inf_{f \in \mathcal{F}} \ExiP[-\partial_\lambda^+ \phi(\lambda, f, \xi)].
	\end{equation*}
	
	At a given $\lambda$, this function indicates the maximum value $\rho$ can take for the dual solution of $R_\rho(f)$ to be higher than $\lambda$. In particular, by convexity of $\phi$, we can easily verify that: if $\rho \leq \radiusmax(\lambda)$ for all $\lambda \in [0, 2\lambdalow]$, then the dual bound $2 \lambdalow$ holds on the true robust risk, and we have:
	\begin{equation}\label{eq:Rrho}
		R_\rho(f) = \inf_{\lambda \geq 2\lambdalow} \left\{ \lambda \rho + \E_{\xi \sim P}[\phi(\lambda,f,\xi)] \right\}.
	\end{equation}
	As illustrated by \Cref{fig:gamma}, $\radiusmax$ reaches its highest value at zero, which is actually the \emph{critical radius}, $\radiuscrit$. The crux of the proof is to show there exists a value $\lambdalow$ allowing to choose radius values of order $\radiuscrit$. This comes by passing to the limit in $\radiusmax(\lambda)$.
	
	\begin{lemma}\label{prop:continuity_radius_sketch} We have $\lim_{\lambda \to 0^+} \radiusmax(\lambda) = \radiuscrit$. In particular, there exists $\lambdalow> 0$ such that if $\rho \leq \frac{\radiuscrit}{4}$, then \eqref{eq:Rrho} holds for all $f \in \F$. 
	\end{lemma}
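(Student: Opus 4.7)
The plan is to apply Danskin's envelope theorem to get an explicit formula for $-\partial_\lambda^+ \phi(\lambda,f,\xi)$, exploit convexity of $\phi$ in $\lambda$ to show $\radiusmax$ is non-increasing (which handles the easy direction of the limit), then combine compactness of $\F$ in $\|\cdot\|_\infty$ with Fatou's lemma to obtain the reverse inequality, and finally deduce the ``in particular'' statement by a direct dual-convexity argument.

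Concretely, since $\phi(\lambda,f,\xi) = \sup_{\zeta\in\Xi}\{f(\zeta) - \lambda c(\xi,\zeta)\}$ is a supremum of affine functions of $\lambda$ over the compact set $\Xi$ with jointly continuous integrand, Danskin's theorem yields
$$-\partial_\lambda^+ \phi(\lambda,f,\xi) \;=\; \min_{\zeta\in M(\lambda,f,\xi)} c(\xi,\zeta), \qquad M(\lambda,f,\xi) := \argmax_{\zeta\in\Xi}\{f(\zeta) - \lambda c(\xi,\zeta)\}.$$
At $\lambda=0$, $M(0,f,\xi)=\argmax_\Xi f$, so $\radiusmax(0)=\radiuscrit$ by definition. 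Convexity of $\phi(\cdot,f,\xi)$ makes $\partial_\lambda^+\phi$ non-decreasing in $\lambda$, hence $\lambda\mapsto\E_P[-\partial_\lambda^+\phi(\lambda,f,\xi)]$ non-increasing for every $f$ and therefore $\radiusmax$ non-increasing, which immediately gives $\limsup_{\lambda\to 0^+}\radiusmax(\lambda)\le\radiuscrit$.

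For the reverse inequality, I would fix $\lambda_n\searrow 0$ and pick $f_n\in\F$ with $\E_P[-\partial_\lambda^+\phi(\lambda_n,f_n,\xi)]\le\radiusmax(\lambda_n)+1/n$. By compactness of $(\F,\|\cdot\|_\infty)$ (\Cref{ass:general_assumptions}.3), extract a uniformly convergent subsequence $f_n\to f^*$. For each $\xi$, let $\zeta_n^*\in M(\lambda_n,f_n,\xi)$ attain the inner minimum; by compactness of $\Xi$, any limit point $\zeta^*$ of $\{\zeta_n^*\}$ satisfies $\zeta^*\in\argmax f^*$, since passing to the limit in $f_n(\zeta_n^*)-\lambda_n c(\xi,\zeta_n^*)\ge f_n(\zeta)-\lambda_n c(\xi,\zeta)$, with $f_n\to f^*$ uniformly, $\lambda_n\to 0$ and $c$ bounded, gives $f^*(\zeta^*)\ge f^*(\zeta)$ for every $\zeta\in\Xi$. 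A standard subsequence argument then yields $\liminf_n(-\partial_\lambda^+\phi(\lambda_n,f_n,\xi))\ge -\partial_\lambda^+\phi(0,f^*,\xi)$ pointwise in $\xi$, and since the integrands are uniformly bounded by $\max_{\Xi\times\Xi} c$, Fatou's lemma gives
$$\liminf_n \radiusmax(\lambda_n) \;\ge\; \E_P[-\partial_\lambda^+\phi(0,f^*,\xi)] \;\ge\; \inf_{f\in\F}\E_P[-\partial_\lambda^+\phi(0,f,\xi)] \;=\; \radiuscrit.$$

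For the ``in particular'' part, the proven limit provides $\lambdalow>0$ such that $\radiusmax(2\lambdalow)\ge\radiuscrit/2$. For any $\rho\le\radiuscrit/4$ and any $f\in\F$, the convex dual objective $h(\lambda):=\lambda\rho+\E_P[\phi(\lambda,f,\xi)]$ has right derivative $h^+(\lambda)=\rho-\E_P[-\partial_\lambda^+\phi(\lambda,f,\xi)]\le\rho-\radiusmax(\lambda)\le\rho-\radiusmax(2\lambdalow)\le-\radiuscrit/4<0$ on $[0,2\lambdalow]$ (by monotonicity of $\radiusmax$), so convexity of $h$ forces its infimum on $[0,\infty)$ to be attained on $[2\lambdalow,\infty)$, which is exactly \eqref{eq:Rrho}. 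I expect the main obstacle to be the joint passage to the limit in $(\lambda_n,f_n)$ establishing $\zeta^*\in\argmax f^*$: this upper-semicontinuity of the argmax at the corner $\lambda=0$ is where the uniform-norm compactness of $\F$ and the compactness of $\Xi$ both come into play, and it must be carried out without any smoothness assumption on $f$ or $c$.
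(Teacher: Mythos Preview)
Your proposal is correct and follows essentially the same approach as the paper: the envelope formula for $-\partial_\lambda^+\phi$, monotonicity of $\radiusmax$ from convexity for the easy direction, then compactness of $\F$ plus a Fatou argument for the reverse inequality, and finally the convexity of the dual objective to localize the infimum. The only cosmetic differences are that the paper phrases the reverse inequality as a contradiction and invokes abstract outer-semicontinuity results for the argmax (Lemmas 17.30--17.31 in their reference) where you carry out the sequential argument by hand, and the paper chooses the threshold $\radiuscrit/4$ rather than your $\radiuscrit/2$ for $\radiusmax(2\lambdalow)$; neither affects the substance.
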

	
	Such a result may be surprising since $\phi$ is a nonsmooth function and $\radiusmax$, defined from the lower envelope of (discontinuous) derivatives of $\phi$ is in general highly discontinuous. In order to establish it, we use tools from nonsmooth analysis (\Cref{subsec:nonsmoothanalysis}) and leverage compactness of the class $\F$.

	\subsection{Extension to (double) regularization} \label{subsection:extension_regularized}
	
	The strategy of \Cref{subsec:main_approach} is flexible enough to be extended to the regularized setting of \Cref{sec:reg}. Indeed, the regularized problem also has a dual representation, with a dual generator defined by
	\begin{equation*}
		\phi^{\tau, \epsilon}(\lambda, f, \xi) = (\epsilon + \lambda \tau) \log \Ezetaxi \left[e^{\frac{f(\zeta) - \lambda c(\xi,\zeta)}{\epsilon + \lambda \tau}}\right],
	\end{equation*}
	where $\epsilon > 0$ and $\tau \geq 0$. Strong duality has been shown in \cite{azizian2023regularization}. We explain in \Cref{sec:duality}, \Cref{prop:duality_regularized} how it applies to our general setting. This regularized dual generator leads to a smooth counterpart of the key function $\radiusmax$ from the proof and to the regularized critical radius \eqref{eq:reg_crit}. In particular, we can show the regularized version of \Cref{prop:continuity_radius_sketch}.
	
	\begin{lemma} $\lim_{\lambda \to 0^+} \radiusmax^{\tau,\epsilon}(\lambda) = \radiuscrit^{\tau,\epsilon}$. In particular, there exists $\lambdalow^{\tau,\epsilon}> 0$ such that if $\rho \leq \frac{\radiuscrit^{\tau,\epsilon}}{4}$,
		\begin{equation*}
			R_\rho^{\tau,\epsilon}(f) = \inf_{\lambda \geq 2\lambdalow^{\tau,\epsilon}} \left\{ \lambda \rho + \E_{\xi \sim P}[\phi^{\tau,\epsilon}(\lambda,f,\xi)] \right\} \qquad \text{for all } f \in \F.
		\end{equation*}
	\end{lemma}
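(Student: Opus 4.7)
The plan is to mirror the strategy of \Cref{prop:continuity_radius_sketch}, now exploiting the fact that $\phi^{\tau,\epsilon}(\lambda, f, \xi) = (\epsilon + \lambda\tau)\log \Ezetaxi[e^{(f(\zeta)-\lambda c(\xi,\zeta))/(\epsilon+\lambda\tau)}]$ is \emph{smooth} in $\lambda$ for all $\lambda \geq 0$, since $\epsilon > 0$ keeps the temperature $\epsilon + \lambda\tau$ bounded away from zero. The nonsmooth-analysis machinery of \Cref{subsec:nonsmoothanalysis} thus reduces to standard differentiation under the integral sign. Concretely I would (i) compute $\partial_\lambda \phi^{\tau,\epsilon}$ at $\lambda = 0$ and verify it recovers the integrand of $\radiuscrit^{\tau,\epsilon}$, (ii) pass the limit $\lambda \to 0^+$ through the infimum over $f$ using joint continuity and compactness of $\F$, and (iii) deduce the dual representation by convexity.

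For step (i), differentiating under the integral (justified by compactness of $\Xi$ and uniform boundedness of the exponent near $\lambda = 0$) and simplifying gives
\begin{equation*}
-\partial_\lambda \phi^{\tau,\epsilon}(0, f, \xi) = \E_{\zeta \sim \pi_0^{f/\epsilon}(\cdot|\xi)}\!\left[c(\xi,\zeta) + \tfrac{\tau}{\epsilon} f(\zeta)\right] - \tau \log \Ezetaxi\!\left[e^{f(\zeta)/\epsilon}\right],
\end{equation*}
whose expectation under $P$ and infimum over $f \in \F$ is precisely $\radiuscrit^{\tau,\epsilon}$ from \eqref{eq:reg_crit}. For step (ii), I would show that $g(\lambda, f) := \E_{\xi \sim P}[-\partial_\lambda \phi^{\tau,\epsilon}(\lambda, f, \xi)]$ is jointly continuous on $[0, \Lambda] \times (\F, \|\cdot\|_\infty)$ for some $\Lambda > 0$: continuity in $\lambda$ follows from dominated convergence with integrable envelopes coming from $\maxF < \infty$, $\max_{\Xi\times\Xi} c < \infty$ and $\epsilon + \lambda\tau \geq \epsilon$; continuity in $f$ follows because $f \mapsto e^{f/(\epsilon+\lambda\tau)}$ is Lipschitz on bounded uniform balls, so the softmax-type densities $\pi_0^{f/\epsilon}(\cdot|\xi)$ and their normalizers depend continuously on $f$. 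Compactness of $\F$ then ensures $\radiusmax^{\tau,\epsilon}(\lambda) = \inf_{f \in \F} g(\lambda, f)$ is itself continuous at $\lambda = 0$, which yields $\radiusmax^{\tau,\epsilon}(0^+) = \radiuscrit^{\tau,\epsilon}$.

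For step (iii), I would pick $\lambdalow^{\tau,\epsilon} > 0$ small enough that $\radiusmax^{\tau,\epsilon}(\lambda) \geq \radiuscrit^{\tau,\epsilon}/2$ for all $\lambda \in [0, 2\lambdalow^{\tau,\epsilon}]$. Then for any $f \in \F$ and $\rho \leq \radiuscrit^{\tau,\epsilon}/4$, the right-derivative of $\lambda \mapsto \lambda\rho + \E_P[\phi^{\tau,\epsilon}(\lambda,f,\xi)]$ equals $\rho - g(\lambda, f) \leq \rho - \radiusmax^{\tau,\epsilon}(\lambda) \leq -\radiuscrit^{\tau,\epsilon}/4 < 0$ throughout $[0, 2\lambdalow^{\tau,\epsilon}]$, so by convexity and the strong duality of \Cref{prop:duality_regularized} the minimum in the dual formula lies in $[2\lambdalow^{\tau,\epsilon}, \infty)$, giving the claimed representation. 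The main obstacle is securing joint continuity of $g$ with uniform control in $f$; this is handled cleanly because, unlike the bare $\phi$, the regularization $\epsilon > 0$ smooths the dependence on $f$ and prevents any blow-up in the weights $e^{f/(\epsilon+\lambda\tau)}$.
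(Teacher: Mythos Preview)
Your proposal is correct and establishes the stated lemma. The paper's proof (\Cref{prop:continuity_radius_regularized}) takes a more quantitative route: rather than arguing joint continuity of $g(\lambda,f)$ and invoking compactness of $\F$ to deduce continuity of the infimum, it computes the second derivative $\partial_\lambda^2 \phi^{\tau,\epsilon}$ explicitly, identifies it as a variance term
\[
\partial_\lambda^2 \phi^{\tau,\epsilon}(\lambda,f,\xi) = \frac{1}{\epsilon+\lambda\tau}\,\mathrm{Var}_{\zeta \sim \pi_0^{(f-\lambda c(\xi,\cdot))/(\epsilon+\lambda\tau)}(\cdot|\xi)}\!\left(\frac{\tau f(\zeta)+\epsilon c(\xi,\zeta)}{\epsilon+\lambda\tau}\right),
\]
and bounds it uniformly in $(\lambda,f,\xi)$ to obtain a global Lipschitz constant $L$ for $\radiusmax^{\tau,\epsilon}$. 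This yields an explicit formula for $\lambdalow^{\tau,\epsilon}$ (see \eqref{eq:res}), which the paper later exploits to quantify the dependence of the generalization constants on $\epsilon$ and $\maxF$. Your approach is cleaner for the bare existence claim---joint continuity plus compactness sidesteps the lengthy derivative computation---but forfeits this explicit estimate. Steps (i) and (iii) of your plan coincide with the paper's; the divergence is entirely in step (ii).
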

	Then we obtain \Cref{th:generalization_regularized} by repeating the proof scheme of \Cref{subsec:main_approach}. The core results that simultaneously lead to Theorems~\ref{th:generalization_standard} and \ref{th:generalization_regularized} are gathered in \Cref{appendix:dualboundempirical}. Due to the smoothness of $\radiusmax^{\tau,\epsilon}$ an expression of $\lambdalow^{\tau,\epsilon}$ can also be obtained; see \Cref{prop:continuity_radius_regularized}.
	
	The key difference brought by regularization is that Lipschitzness of $\mu \phi(\mu^{-1}\!,\!f,\xi)$ is lost when $\mu \to 0$. This prevents us from using the concentration result -- essential to bound the gap $\alpha_n$ \eqref{eq:alpha_n} -- unless we can set a lower bound on $\mu$, or equivalently an upper bound on $\lambda$. This issue is inherent to the regularized setting and may occur over the whole family $\F$ and the space $\Xi$; we provide an example in  \Cref{prop:nonlipschitz_psi} to illustrate this. 
	The next lemma aims to overcome this issue by establishing the existence of such an upper bound for any distribution (see \Cref{prop:upper_bound} for a proof).

	\begin{lemma} \label{lem:upper_bound_sketch} Let $Q \in \PXi$, $\rho > m_c$ and $\lambdaup := \frac{2 \maxF}{\rho - m_c}$. Then for all $f \in \F$, 
		\begin{equation*}
			R_{\rho, Q}^{\tau,\epsilon}(f) = \inf_{\lambda \in [0,\lambdaup]} \accol{\lambda \rho + \E_{\xi \sim Q}[\phi^{\tau,\epsilon}(\lambda,f,\xi)]}.
		\end{equation*}
	\end{lemma}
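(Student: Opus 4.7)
My plan is to start from the strong duality formula for the doubly regularized robust risk (Proposition~\ref{prop:duality_regularized}),
\begin{equation*}
R_{\rho, Q}^{\tau,\epsilon}(f) \;=\; \inf_{\lambda \geq 0} G(\lambda), \qquad G(\lambda) := \lambda \rho + \E_{\xi \sim Q}\bracket{\phi^{\tau,\epsilon}(\lambda,f,\xi)},
\end{equation*}
and to argue that every $\lambda > \lambdaup$ is strictly worse than $\lambda = 0$, uniformly in $f \in \F$. This will trap the infimum in $[0, \lambdaup]$ and yield the claimed identity. The strategy is to derive a linear lower bound on $G$ with positive slope $\rho - m_c$, and match it against a uniform upper bound on $G(0)$; the value $\lambdaup = 2\maxF/(\rho-m_c)$ will emerge as the crossing point.

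\textbf{Key steps.} First, I would Jensen-bound the log-moment-generating expression inside $\phi^{\tau,\epsilon}$: since $\exp$ is convex,
\begin{equation*}
\phi^{\tau,\epsilon}(\lambda,f,\xi) \;\geq\; \Ezetaxi\bracket{f(\zeta)-\lambda c(\xi,\zeta)} \;\geq\; -\maxF - \lambda\,\Ezetaxi[c(\xi,\zeta)],
\end{equation*}
using $f \geq -\maxF$ pointwise. Taking $\E_{\xi \sim Q}$ and invoking $m_c = \max_\xi \Ezetaxi[c(\xi,\zeta)]$ gives the linear bound $G(\lambda) \geq \lambda(\rho - m_c) - \maxF$. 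Second, I would upper bound $G(0)$: since $f \leq \maxF$,
\begin{equation*}
\phi^{\tau,\epsilon}(0,f,\xi) \;=\; \epsilon \log \Ezetaxi\bracket{e^{f(\zeta)/\epsilon}} \;\leq\; \maxF,
\end{equation*}
hence $G(0) \leq \maxF$. Third, I would plug $\lambda = \lambdaup$ into the linear bound to get $\lambdaup(\rho - m_c) - \maxF = \maxF$, so that for any $\lambda > \lambdaup$ one has $G(\lambda) > \maxF \geq G(0)$. Since $0 \in [0, \lambdaup]$, the infimum over $[0, \infty)$ therefore coincides with the infimum over $[0, \lambdaup]$.

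\textbf{Main difficulty.} I do not anticipate a real obstacle here: the crux is noticing that Jensen produces a first-order Taylor-like lower bound on $\phi^{\tau,\epsilon}$ in $\lambda$, which suffices despite the nonlinear $(\epsilon + \lambda\tau)$ prefactor and the same quantity in the denominator of the exponent. All bounds remain uniform in $f \in \F$ because they only involve $\maxF$ and the $f$-independent moment $m_c$. The only mildly delicate point is requiring $\rho > m_c$ so that the slope $\rho - m_c$ is positive and $\lambdaup$ is finite --- which is exactly the hypothesis of the lemma, and which is the reason the assumption $\rho > \max\{m_c, \alpha^{\tau,\epsilon}/\sqrt{n}\}$ appears in \Cref{th:generalization_regularized}.
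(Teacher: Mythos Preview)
Your argument is correct and in fact more elementary than the paper's. Both proofs ultimately rely on the same two-sided estimate $\phi^{\tau,\epsilon}(\lambda,f,\xi)\in[-\maxF-\lambda m_c,\,\maxF]$ (this is exactly Lemma~\ref{lem:constant_regularized}.1(i) in the appendix), but they exploit it differently. The paper computes the derivative $\partial_\lambda\phi^{\tau,\epsilon}$ explicitly, uses the auxiliary inequality $\log\E[e^g]\le \E[g e^g]/\E[e^g]$ (Lemma~\ref{lem:ineq_logexp}) together with Jensen to obtain the pointwise bound $-\partial_\lambda\phi^{\tau,\epsilon}(\lambda,f,\xi)\le 2\maxF/\lambda+m_c$, and then invokes convexity of $G$ to conclude that the infimum lies in $[0,\lambdaup)$. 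You bypass both the derivative computation and the convexity argument: the linear lower bound $G(\lambda)\ge\lambda(\rho-m_c)-\maxF$ combined with $G(0)\le\maxF$ immediately forces any $\lambda>\lambdaup$ to satisfy $G(\lambda)>G(0)$, trapping the infimum in $[0,\lambdaup]$. Your route is shorter and needs strictly fewer tools; the paper's route has the mild advantage of locating where the dual objective starts increasing (a first-order statement), which fits their broader derivative-based analysis of $\radiusmax^{\tau,\epsilon}$, but for the lemma as stated your approach is cleaner.
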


	\section{Concluding remarks}
	In this work, we provide exact generalization guarantees of (regularized) Wasserstein robust models, 
	without restrictive assumptions (on the transport cost or the class of functions). We achieve these universal results by directly addressing the intrinsic nonsmoothness of robust problems. Our results thus give users freedom when choosing the radius\;$\rho$: 
	it is not necessary to consider specific regimes for $\rho$ in order to expect good generalization from robust models. Further research can now focus on practical aspects: it would be of premier interest to design efficient 
	procedures for selecting $\rho$, and more generally, scalable algorithms for solving distributionally robust optimization problems.

	\bibliography{references}
	\bibliographystyle{iclr2025_conference}
	
	\newpage
	\appendix

	\noindent {\Large \textbf{Supplemental material}}
	
	\vspace{0.5cm}
	
	This supplemental gathers recalls, technical lemmas and definitions, examples, and detailed proofs of the results from the main text. The core of our contributions is presented in Appendices\;\ref{sec:dualbounds} and\;\ref{proofmainresults}.
	The whole supplemental 
	is organized as follows:
	
	-- In \Cref{sec:technical_preliminaries}, we recall some essential mathematical tools. They include continuity notions in nonsmooth analysis, the envelope formula to differentiate supremum functions (\Cref{th:envelope_formula}) and
	a uniform concentration inequality (\Cref{lem:concentration}).

	-- In \Cref{sec:duality}, we present strong duality results for WDRO and its regularized version. We explain in particular how the duality theorem from \cite{azizian2023regularization} can be easily adapted to our setting.
	
	-- \Cref{sec:concentrationconstants} contains preliminary computations in view of applying the uniform concentration theorem.
	
	-- In \Cref{sec:dualbounds}, we demonstrate the existence of a dual lower bound in the standard and regularized cases. In particular, the proofs involve the maximal radius introduced in \Cref{subsection:lowerbound}.
	
	-- By using these preliminary results, in \Cref{proofmainresults}, we prove our main results. They include the generalization theorems (\Cref{th:generalization_standard} and \ref{th:generalization_regularized}), the excess bounds (\Cref{th:excess_risk_standard} and \Cref{th:excess_risk_regularization}) and the constants of the linear models (\Cref{prop:linearmodels}). 
	
	-- \Cref{sideremarks} contains results supporting several remarks of the main. 
	They include the interpretation of the critical radius in the regularized case, a counter-example justifying the upper bound in the regularized case and the interpretation of the restrictive compactness assumptions used in \cite{azizian2023exact}.

	\section*{Notations}
	
	Throughout the proofs will use the following notations:

	\paragraph{On probability spaces.} Given the metric space $(\Xi,d)$ where $d$ is a distance, we denote the space of probability measures on $\Xi$ by $\mathcal{P}(\Xi)$. For all $\pi \in \mathcal{P}(\Xi \times \Xi)$,  $i \in \{1,2\}$, we denote the $i^{\text{th}}$  marginal of $\pi$ by $[\pi]_i$. We denote the Dirac mass at $\xi \in \Xi$ by $\delta_\xi$. Given a measurable function $g : \Xi \to \R$, we denote the expectation of $g$ with respect to $Q \in \PXi$ by $\E_{\xi \sim Q}[g(\xi)]$ and we may also use the shorthand $\E_{Q}[g]$. For $\pi \in \PXi$, we use the notation $\pi^{g}$ for the Gibbs distribution  $\di \pi^{g} \propto e^{g} \di \pi$.

	\paragraph{On function spaces.}

	For  a continuous function $f : \Xi \to \R$, we denote the uniform norm by $\|f\|_\infty = \sup_{\xi \in \Xi} |f(\xi)|$. By extension, if $\mathcal{F}$ is a set of functions we denote $\|\F\|_\infty := \sup_{f \in \F} \|f\|_\infty$. Whenever well-defined, we denote the set of maximizers of $f$ on $\Xi$ by $\argmax_\Xi f := \{\zeta \in \Xi \ : \  f(\zeta) = \max_{\xi \in \Xi} f(\xi)\}$. Given a metric space $(\mathcal{X}, \dist)$, we say a function $h : \mathcal{X} \to \R$ is \emph{Lipschitz} with constant $L > 0$ 
	if for all $x, y \in \mathcal{X}$, $|h(x) - h(y)| \leq L \dist(x,y)$. 
	For $\phi\colon \R \times \mathcal{X} \to \R$, we denote $\partial^+_\lambda \phi$ the right-sided derivative with respect to $\lambda \in \R$, and $\partial_\lambda \phi$ the derivative, whenever well-defined.

	\paragraph{In Wasserstein robust models.}
	
	\begin{itemize}
		\item $\phi(\lambda,f,\xi) = \sup_{\zeta \in \Xi} \accol{f(\zeta) - \lambda c(\xi,\zeta)}$
		\item $\psi(\mu,f,\xi) = \mu \phi(\mu^{-1}, f, \xi)$
		\item $\radiuscrit = \inf_{f \in \F} \ExiP\bracket{\min \accol{c(\xi, \zeta) \ : \ \zeta \in \argmax_\Xi f}}.$
	\end{itemize}
	
	\paragraph{In Wasserstein robust models with double regularization.}
	
	\begin{itemize}
		\item $\phi^{\tau,\epsilon}(\lambda, f, \xi) = (\epsilon + \lambda \tau) \log \Ezetaxi \bracket{e^{\frac{f(\zeta) - \lambda c(\xi,\zeta)}{\epsilon + \lambda \tau} }}$ 
		\item $\psi^{\tau,\epsilon}(\mu,f,\xi) = \mu \phi^{\tau,\epsilon}(\mu^{-1}, f, \xi)$
		\item     $\radiuscrit^{\tau,\epsilon} = \inf_{f \in \F} \E_{\xi \sim P} \left[  \E_{\zeta \sim \pi_0^{f/\epsilon}(\cdot | \xi)}\left[\frac{\tau}{\epsilon} f(\zeta) + c(\xi,\zeta)\right] - \tau \log \Ezetaxi \left[e^{\frac{f(\zeta)}{\epsilon}}\right] \right].$
	\end{itemize}

	\section{Recalls and technical preliminaries}
	\label{sec:technical_preliminaries}
	
	\subsection{Nonsmooth analysis}
	\label{subsec:nonsmoothanalysis}
	In this part, we use the notation $G : \mathcal{X} \rightrightarrows \mathcal{Y}$ to denote a function $G$ defined on $\mathcal{X}$ and valued in the set of subsets of $\mathcal{Y}$.
	
	Semicontinuity notions will be necessary to understand the proof of \Cref{prop:continuity_radius}. They are regularity notions recurrently arising when manipulating nonsmooth convex functions.

	\begin{definition}[Lower and upper semicontinuity {\cite[2.42]{infinite}}] \label{def:lower_upper_semi} Let $(\mathcal{X}, \dist)$ be a metric space and let $f : \mathcal{X} \to \R$. Then
		\begin{enumerate}
			\item $f$ is called \emph{lower semicontinuous} if for all $x \in \mathcal{X}$, $\liminf_{y \to x} f(y) \geq f(x)$.
			\item $f$ is called \emph{upper semicontinuous} if for all $x \in \mathcal{X}$, $\limsup_{y \to x} f(y) \leq f(x)$.
		\end{enumerate}
		
		In particular, if $f$ is lower semicontinuous, then $-f$ is upper semicontinuous.
	\end{definition}

	\emph{Outer semicontinuity} can be seen as the set-valued counterpart of upper semicontinuity:
	
	\begin{definition}[Outer semicontinuity] \label{def:outer_semi} Let $\mathcal{X}$ and $\mathcal{Y}$ two metric spaces. Then a measurable and compact-valued map $G : \mathcal{X} \rightrightarrows \mathcal{Y}$ is called \emph{outer semicontinuous} at $x \in \mathcal{X}$ if for all open subset $V \subset \mathcal{Y}$ containing $G(x)$, there exists a neighborhood $U$ of $x$ which is such that for all $w \in U$, $G(w) \subset V$.    
	\end{definition}

	\bigskip

	Semicontinuity of maximum and $\argmax$ functions are central to the proof of \Cref{prop:continuity_radius}:

	\begin{lemma}[Semicontinuity of maximum value {\cite[17.30]{infinite}}] \label{lem:max_ush_is_usc} Let $\mathcal{X}$ and $\Xi$ be two metric spaces and let $G : \mathcal{X} \rightrightarrows \Xi$ be outer semicontinuous with nonempty compact values, $h : \Xi \times \Xi \rightarrow \R$ continuous. Then the function 
		
		\begin{equation*}
			x \mapsto \max \{h(x,v) \ : \ v \in G(x) \}
		\end{equation*}
		
		is upper semicontinuous. In particular, $u \mapsto \min \{h(u,v) \ : \ v \in G(x) \}$ is lower semicontinuous.
	\end{lemma}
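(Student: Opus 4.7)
The statement is a form of Berge's maximum theorem, proved in \cite[17.30]{infinite}. My plan is to prove it directly by a sequential argument that combines the outer semicontinuity of $G$ with the compactness of its values. The main obstacle will be extracting a convergent subsequence of maximizers without assuming any local compactness of $\Xi$.

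I would fix $x_0 \in \mathcal{X}$ and show that $M(x) := \max\{h(x,v) : v \in G(x)\}$ --- which is well-defined since $G(x)$ is compact and $h(x,\cdot)$ is continuous, so the supremum is attained --- satisfies $\limsup_{x_n \to x_0} M(x_n) \leq M(x_0)$ for every sequence $x_n \to x_0$. Passing to a subsequence, I may assume $M(x_n) \to \limsup M(x_n)$, and for each $n$ select a maximizer $v_n \in G(x_n)$ so that $M(x_n) = h(x_n, v_n)$.

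The key step is then to extract a further subsequence along which $v_n$ converges to some $v^\star \in G(x_0)$. For this I would use a projection onto $G(x_0)$: for every $\delta > 0$ the open $\delta$-neighborhood $V_\delta = \{v \in \Xi : d(v, G(x_0)) < \delta\}$ contains $G(x_0)$, so outer semicontinuity (\Cref{def:outer_semi}) yields some $N_\delta$ such that $G(x_n) \subset V_\delta$, and hence $d(v_n, G(x_0)) < \delta$, for all $n \geq N_\delta$. A diagonal argument gives $d(v_n, G(x_0)) \to 0$, so one can pick $w_n \in G(x_0)$ with $d(v_n, w_n) \to 0$; compactness of $G(x_0)$ then delivers a subsequence $w_{n_k} \to v^\star \in G(x_0)$, whence $v_{n_k} \to v^\star$ by the triangle inequality.

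Continuity of $h$ finishes the upper-semicontinuity proof: $M(x_{n_k}) = h(x_{n_k}, v_{n_k}) \to h(x_0, v^\star) \leq M(x_0)$, which gives $\limsup M(x_n) \leq M(x_0)$ as required. The second claim, about the minimum, follows immediately by applying the result to $-h$, since $\min\{h(\cdot,v) : v \in G(\cdot)\} = -\max\{-h(\cdot,v) : v \in G(\cdot)\}$ and negating a function swaps upper and lower semicontinuity in the sense of \Cref{def:lower_upper_semi}.
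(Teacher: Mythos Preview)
The paper does not supply its own proof of this lemma; it is simply recalled from the cited reference \cite[17.30]{infinite} as a standard tool from nonsmooth analysis. Your proposal therefore goes beyond what the paper does and gives a correct, self-contained argument.

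The sequential strategy you outline is the natural one, and the projection-onto-$G(x_0)$ trick is precisely what is needed to extract a convergent subsequence of maximizers without any ambient compactness of $\Xi$: outer semicontinuity forces $d(v_n,G(x_0))\to 0$, compactness of $G(x_0)$ then yields the limit $v^\star\in G(x_0)$, and joint continuity of $h$ closes the argument. One minor remark: the ``diagonal argument'' you mention is superfluous --- the statement that for every $\delta>0$ one has $d(v_n,G(x_0))<\delta$ for all large $n$ is already the definition of $d(v_n,G(x_0))\to 0$. Also note that the lemma as stated in the paper has $h:\Xi\times\Xi\to\R$, which is a typo (the first factor should be $\mathcal{X}$, as is clear from its use in the proof of \Cref{prop:continuity_radius}); you have implicitly read it in the intended way.
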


	\begin{lemma}[Semicontinuity of maximizers {\cite[17.31]{infinite}}] \label{lem:argmax_ush} If $\mathcal{X}$ is a metric space, $(\Xi, d)$ is a compact metric space, and $h : \mathcal{X} \times \Xi \to \R  $ is continuous, then the function $x \mapsto \max_{z \in \Xi} h(x,z)$ is continuous, and the set-valued map $x \mapsto \argmax_{z \in \Xi} h(x,z)$ is outer semicontinuous.
	\end{lemma}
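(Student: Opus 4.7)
The plan is to treat the two conclusions separately. Write $m(x) := \max_{z \in \Xi} h(x,z)$ and $M(x) := \argmax_{z \in \Xi} h(x,z)$. Compactness of $\Xi$ and continuity of $h(x,\cdot)$ ensure that the maximum is attained and that $M(x)$ is a nonempty closed, hence compact, subset of $\Xi$, so both objects are well-defined.

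For the continuity of $m$, I would fix $x_0 \in \mathcal{X}$ and an arbitrary sequence $x_n \to x_0$, then bound $\limsup_n m(x_n)$ and $\liminf_n m(x_n)$ separately. For the upper bound, I pick any $z_n \in M(x_n)$ and use compactness of $\Xi$ to extract a convergent subsequence $z_{n_k} \to z^\star$; joint continuity of $h$ yields $m(x_{n_k}) = h(x_{n_k}, z_{n_k}) \to h(x_0, z^\star) \leq m(x_0)$. For the lower bound, I fix any $z^\star \in M(x_0)$ and observe $m(x_n) \geq h(x_n, z^\star) \to h(x_0, z^\star) = m(x_0)$. Combining gives $m(x_n) \to m(x_0)$.

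For outer semicontinuity of $M$, I would first translate the topological definition (\Cref{def:outer_semi}) into the equivalent sequential statement: whenever $x_n \to x_0$ and $z_n \in M(x_n)$ satisfies $z_n \to z^\star$, then $z^\star \in M(x_0)$. This sequential statement is then immediate from the previous step, since $h(x_n, z_n) = m(x_n) \to m(x_0)$ by continuity of $m$ and $h(x_n, z_n) \to h(x_0, z^\star)$ by joint continuity of $h$, forcing $h(x_0, z^\star) = m(x_0)$, i.e., $z^\star \in M(x_0)$.

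The main technical obstacle is the equivalence between the open-set definition of outer semicontinuity and the sequential characterization. I would prove the nontrivial direction by contradiction: if some open $V \supset M(x_0)$ admits no neighborhood $U$ with $M(x) \subset V$ for all $x \in U$, one builds $x_n \to x_0$ together with $z_n \in M(x_n) \setminus V$, extracts a convergent subsequence $z_{n_k} \to z^\star$ by compactness of $\Xi$, and notes that $z^\star$ lies in the closed set $\Xi \setminus V$ while simultaneously $z^\star \in M(x_0) \subset V$ by the sequential property, a contradiction. This is the only place where metrizability (and compactness) of $\Xi$ is used beyond the existence of maximizers.
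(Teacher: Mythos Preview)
The paper does not give its own proof of this lemma; it is stated as a recall from the literature (cited as [17.31] in the reference \emph{infinite}, i.e.\ Aliprantis--Border) and used as a black box in the proofs of \Cref{prop:continuity_radius} and \Cref{prop:radius_lossfluctuation_appendix}. So there is nothing to compare against, and the relevant question is simply whether your argument is correct. It is.

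One small tightening: in the upper-bound step for $\limsup_n m(x_n)$, the clean way is to first pass to a subsequence $(x_{n_j})$ along which $m(x_{n_j}) \to \limsup_n m(x_n)$, and only then extract a convergent subsequence of the corresponding maximizers $z_{n_j}$; as written, you only bound the limit along one particular subsequence, which does not a priori control the full $\limsup$. This is a standard and easy fix. The rest---the lower bound via evaluation at a fixed maximizer, the sequential criterion for outer semicontinuity, and the contradiction argument linking that criterion to \Cref{def:outer_semi}---is correct and is exactly the textbook route.
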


	\bigskip

	We recall the definition of gradient for a nonsmooth convex function. This the \emph{subdifferential}.

	\begin{definition}[Subdifferential of convex function] Let $\phi : \R^m \to \R$ be a convex function. Then we call subdifferential of $\phi$ the set-valued map $\partial \phi : \R^m \rightrightarrows \R^m$ such that for all $x \in \R^m$ and $y \in \R^m$,
		\begin{equation*}
			\phi(y) \geq \phi(x) + \langle v, y - x \rangle \text{ for all } v \in \partial \phi(x).
		\end{equation*}
		
	\end{definition}
	
	In particular, we may apply the envelope formula to compute the subdifferential of a maximum function:
	
	\begin{theorem}[Envelope formula {\cite[Cor. 1, Chapter 2.8]{clarke1990optimization}}] \label{th:envelope_formula} Let $(\Xi, d)$ be a compact metric space and $g : \R^m \times \Xi \to \R$ such that 
		
		\begin{enumerate}
			\item For all $x \in \R^m$, $g(x, \cdot)$ is continuous.
			\item For all $\zeta \in \Xi$, $g(\cdot, \zeta)$ is convex with subdifferential $\partial_x g(\cdot, \zeta)$.
		\end{enumerate}
		Then $G := \sup_{\zeta \in \Xi} g(\cdot,\zeta)$ is convex on $\R^m$, and its subdifferential is given for all $x \in \R^m$ by
		\begin{equation*}
			\partial G(x) := \operatorname{conv} \{v \ : \  v \in \partial_x g(x, \zeta), \, \zeta \in \argmax_{\Xi} g(x, \cdot) \}.
		\end{equation*}
		where $\operatorname{conv}$ denotes the convex hull of a set.
	\end{theorem}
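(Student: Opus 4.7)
The plan is to establish the two claims separately: convexity of $G$ first, then the subdifferential formula by double inclusion. Convexity is immediate because a pointwise supremum of convex functions is convex, and $G(x)$ is finite everywhere since $g(x,\cdot)$ is continuous on the compact set $\Xi$; a finite convex function on $\R^m$ is automatically continuous, so $G$ is in particular subdifferentiable everywhere.

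Write $C(x) := \operatorname{conv}\{v : v \in \partial_x g(x,\zeta),\ \zeta \in \argmax_\Xi g(x,\cdot)\}$. The inclusion $C(x) \subseteq \partial G(x)$ is the easy direction, obtained from the subgradient inequality applied pointwise: for any $\zeta^* \in \argmax_\Xi g(x,\cdot)$ and any $v \in \partial_x g(x,\zeta^*)$,
\begin{equation*}
G(y) \geq g(y,\zeta^*) \geq g(x,\zeta^*) + \langle v, y - x\rangle = G(x) + \langle v, y - x\rangle \quad \text{for all } y \in \R^m,
\end{equation*}
so $v \in \partial G(x)$; taking convex combinations and using convexity of $\partial G(x)$ yields $C(x) \subseteq \partial G(x)$.

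For the hard direction $\partial G(x) \subseteq C(x)$, I would argue by contradiction via strict separation in $\R^m$. A preliminary step is to verify $C(x)$ is compact: $\argmax_\Xi g(x,\cdot)$ is a closed, hence compact, subset of $\Xi$; each $\partial_x g(x,\zeta)$ is compact in $\R^m$; using outer semicontinuity of the subdifferential in $\zeta$ together with the local Lipschitz bound inherited from joint continuity of $g$, the union $\bigcup_{\zeta \in \argmax} \partial_x g(x,\zeta)$ is compact, and its convex hull is compact by Carath\'eodory's theorem. Now if $v^* \in \partial G(x)\setminus C(x)$, strict separation produces a direction $w \in \R^m$ with $\langle v^*, w\rangle > \sup_{v \in C(x)}\langle v, w\rangle$. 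The contradiction comes by computing the directional derivative $G'(x;w)$ in two ways: on one hand $v^* \in \partial G(x)$ gives $G'(x;w) \geq \langle v^*, w\rangle$; on the other hand, for $t > 0$ small, picking $\zeta_t \in \argmax_\Xi g(x+tw,\cdot)$ and extracting via compactness a limit $\zeta_{t_k} \to \zeta^*$, outer semicontinuity of $\argmax$ (\Cref{lem:argmax_ush}) forces $\zeta^* \in \argmax_\Xi g(x,\cdot)$, and a standard convex-analysis limit argument on the difference quotients $t^{-1}(g(x+tw,\zeta_t) - g(x,\zeta_t))$ yields $G'(x;w) \leq g'_x(x,\zeta^*;w) \leq \sup_{v \in C(x)}\langle v,w\rangle$, contradicting the separation inequality.

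The main obstacle is precisely this last limit argument: exchanging $t \to 0^+$ with the subgradient/directional-derivative structure of $g(\cdot,\zeta_t)$ requires either joint upper semicontinuity of $(\zeta,w)\mapsto g'_x(x,\zeta;w)$ or uniform (in $\zeta$ near the $\argmax$) local Lipschitz control of $g(\cdot,\zeta)$ around $x$. Both follow from the hypotheses of the theorem but must be carefully assembled, and one must be mindful that $g(x,\zeta_t) \leq G(x)$ while $g(x,\zeta_t) \to g(x,\zeta^*) = G(x)$ in order to discard a spurious positive contribution when passing to the limit. Once this bound on $G'(x;w)$ is in place, combining it with the lower bound $G'(x;w) \geq \langle v^*, w\rangle$ contradicts strict separation, completing the proof.
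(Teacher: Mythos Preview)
The paper does not prove this theorem; it is stated as a recall from Clarke's textbook (the citation \cite[Cor.~1, Chapter~2.8]{clarke1990optimization}) with no accompanying argument. There is therefore nothing to compare your proposal against, but it can be assessed on its own terms.

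Your outline is the standard route to such envelope/Danskin results and is essentially sound: the easy inclusion $C(x)\subseteq\partial G(x)$ is correct as written, and the separation-plus-directional-derivative strategy for the reverse inclusion is the right idea. One caution: in two places you invoke ``joint continuity of $g$'', which is \emph{not} among the hypotheses---only continuity in $\zeta$ for each $x$ and convexity in $x$ for each $\zeta$ are assumed. In particular, \Cref{lem:argmax_ush} as stated requires joint continuity, so you cannot cite it directly to conclude $\zeta^*\in\argmax_\Xi g(x,\cdot)$. Both gaps can be closed using convexity alone: for the $\argmax$ limit, the midpoint inequality gives $g(x,\zeta_{t_k})\geq 2g(x+t_kw,\zeta_{t_k})-g(x+2t_kw,\zeta_{t_k})\geq 2G(x+t_kw)-G(x+2t_kw)\to G(x)$, forcing $g(x,\zeta^*)=G(x)$; for the key bound $G'(x;w)\leq g_x'(x,\zeta^*;w)$, monotonicity of convex difference quotients yields $[g(x+t_kw,\zeta_{t_k})-g(x,\zeta_{t_k})]/t_k\leq [g(x+sw,\zeta_{t_k})-g(x,\zeta_{t_k})]/s$ for any fixed $s>0$, after which continuity in $\zeta$ and then $s\to0^+$ finish the job. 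Likewise, compactness of $C(x)$ follows from the stated hypotheses alone (the union of subdifferentials over the compact $\argmax$ set is closed by passing to the limit in the subgradient inequality via continuity in $\zeta$, and bounded because it sits inside $\partial G(x)$ by the easy inclusion). With these patches your argument goes through.
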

	
	\subsection{Uniform concentration inequality}
	
	We recall concentration inequalities that gives a high probability uniform bound for a family of bounded and Lipschitz functions. 
	We refer the reader to \cite{boucheron} for a complete reference on concentration inequalities, and Lemma G.2 in \cite{azizian2023exact} for the proof of such a result.

	\begin{theorem}[Uniform concentration {\cite[Lem. G.2]{azizian2023exact}}] \label{lem:concentration}
		Let $(\mathcal{X}, \text{dist})$ be a (totally bounded) separable metric space, $P$ a probability distribution on a probability space $\Xi$, and $\widehat{P}_n = \frac{1}{n} \sum_{i=1}^{n} \delta_{\xi_i}$ with $\xi_1, \ldots, \xi_n \overset{\text{i.i.d.}}{\sim} P$. Consider a measurable mapping $X : \mathcal{X} \times \Xi \rightarrow \mathbb{R}$ and assume that,
		\begin{enumerate}[label=(\roman*)]
			\item There is a constant $L > 0$ such that, for each $\xi \in \Xi$, $x \mapsto X(x, \xi)$ is $L$-Lipschitz.
			\item $X(\cdot, \xi)$ almost surely belongs to $[a, b]$.
		\end{enumerate}
		Then, for any $\delta \in (0, 1)$, we respectively have

		\begin{enumerate}
			\item  With probability at least $1-\delta$,
			\begin{equation*}
				\sup_{x \in \mathcal{X}}  \left\{ \E_{\xi \sim \widehat{P}_n}[X(x, \xi)] - \E_{\xi \sim P}[X(x, \xi)] \right\}\leq \frac{48 L \mathcal{I(\mathcal{X, \dist})}}{\sqrt{n}} +  (b-a)\sqrt{2\frac{\log \frac{1}{\delta}}{n}}.
			\end{equation*}
			\item With probability at least $1-\delta$,
			\begin{equation*}
				\sup_{x \in \mathcal{X}}  \left\{ \E_{\xi \sim P}[X(x, \xi)] -  \E_{\xi \sim \widehat{P}_n}[X(x, \xi)] \right\} \leq \frac{48 L \mathcal{I(\mathcal{X, \dist})}}{\sqrt{n}} +  (b-a)\sqrt{2\frac{\log \frac{1}{\delta}}{n}}.
			\end{equation*}
		\end{enumerate}
	\end{theorem}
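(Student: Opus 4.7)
The plan is to prove this classical uniform concentration inequality via the three-step combination of McDiarmid's bounded-differences inequality, Rademacher symmetrization, and Dudley's chaining bound. Since the two statements are symmetric (applying (1) to the process $-X$ yields (2)), it suffices to prove part (1).

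First I would apply McDiarmid's inequality to the functional
\begin{equation*}
F(\xi_1,\ldots,\xi_n) := \sup_{x \in \mathcal{X}} \left\{ \E_{\xi \sim \widehat{P}_n}[X(x,\xi)] - \E_{\xi \sim P}[X(x,\xi)] \right\}.
\end{equation*}
Since $X(\cdot,\xi) \in [a,b]$, replacing a single coordinate $\xi_i$ by some $\xi_i'$ alters $\E_{\widehat{P}_n}[X(x,\cdot)]$ by at most $(b-a)/n$ pointwise in $x$, and the bound survives when passing to the supremum. McDiarmid then yields, with probability at least $1-\delta$,
\begin{equation*}
F \leq \E[F] + (b-a)\sqrt{\frac{2 \log(1/\delta)}{n}},
\end{equation*}
producing the sub-Gaussian term of the bound.

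Next I would control $\E[F]$ by introducing a ghost sample $(\xi_i')$ and i.i.d.\ Rademacher signs $\sigma_i$ independent of the data. The standard symmetrization estimate gives $\E[F] \leq (2/n)\, \E \sup_{x \in \mathcal{X}} \sum_{i=1}^n \sigma_i X(x,\xi_i)$. Conditionally on $(\xi_i)$, the Rademacher process $x \mapsto (1/n) \sum_i \sigma_i X(x,\xi_i)$ is sub-Gaussian with increment parameter of order $L \, \dist(x,y)/\sqrt{n}$, thanks to assumption (i); Dudley's chaining inequality then yields $\E_\sigma[\sup_x \cdot] \leq \tilde C \, L \, \mathcal{I}(\mathcal{X},\dist)/\sqrt{n}$ for some absolute constant $\tilde C$. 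Combining with the factor $2$ from symmetrization produces the leading $48 L \mathcal{I}(\mathcal{X},\dist)/\sqrt{n}$ once $\tilde C$ is tracked through the argument.

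The main obstacle is the bookkeeping of constants needed to arrive at exactly $48$; this depends on the precise normalization of Dudley's inequality (packing vs.\ covering numbers, and whether the chaining uses dyadic scales or a continuous integral) combined with the symmetrization factor. Separability of $(\mathcal{X},\dist)$ is needed to ensure that the supremum is measurable, while total boundedness ensures that the entropy integral $\mathcal{I}(\mathcal{X},\dist)$ is the meaningful controlling quantity. Since the stated result is exactly \cite[Lemma G.2]{azizian2023exact}, one can either appeal to their verification to fix the absolute constant or carry out the standard derivation found in, e.g., \cite{boucheron}.
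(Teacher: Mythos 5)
Your outline is correct: the paper does not prove this statement itself but merely recalls it as \cite[Lem.~G.2]{azizian2023exact}, and the McDiarmid--symmetrization--Dudley chaining route you describe is exactly the standard derivation used there (the constant $48$ arising as the symmetrization factor $2$ times the Dudley constant $24$ for packing numbers). Your reduction of part (2) to part (1) via $-X$ and your identification of the bounded-differences constant $(b-a)/n$ surviving the supremum are both sound, so no gap remains beyond the constant bookkeeping you already flag.
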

	
	The quantity $\mathcal{I}(\mathcal{X}, \dist)$ is defined as follows:

	\begin{definition} \label{def:dudley_integral} Given a compact metric space $(\mathcal{X}, \dist)$, \emph{Dudley's entropy integral}, $\mathcal{I}(\mathcal{X},\dist)$, is defined as
		\begin{equation*}
			\mathcal{I}(\mathcal{X},\dist) := \int_{0}^\infty \sqrt{\log N(t, \mathcal{X}, \dist)} dt
		\end{equation*}
		where $N(t, \mathcal{X}, \text{dist})$ denotes the $t$-packing number of $\mathcal{X}$, which is the maximal number of points in $\mathcal{X}$ that are at least at a distance $t$ from each other.
	\end{definition}

	We may recall some properties of Dudley's entropy for Cartesian products and segments from $\R$. These are known results, see e.g. \cite{wainwright_2019} and Lemmas G.3 and G.4 from \cite{azizian2023exact} for proofs.
	
	\begin{lemma}[Dudley's integral estimates] 
		\label{lem:dudley}
		\begin{enumerate}
			\item[]
			\item  (on Cartesian products) Let $(\mathcal{X}_1, \dist_1)$ and $(\mathcal{X}_2, \dist_2)$ be two metric spaces. Consider the product space $\mathcal{X} := \mathcal{X}_1 \times \mathcal{X}_2$ equipped with the distance $\dist := \dist_1 + \dist_2$. Then we have the inequality
			\begin{equation*}
				\mathcal{I}(\mathcal{X}, \dist) \leq \mathcal{I}(\mathcal{X}_1, \dist_1) + \mathcal{I}(\mathcal{X}_2, \dist_2).
			\end{equation*}
			
			\item  (on $\R$) Let $c > 0$. Then we have the inequality 
			\begin{equation*}
				\mathcal{I}([0,c], |\cdot|) \leq \frac{3c}{2}.
			\end{equation*}
		\end{enumerate}
	\end{lemma}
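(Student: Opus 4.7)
The two statements are independent and use very different techniques, so I would address them separately.

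For part (1) (the Cartesian product bound), the core fact is submultiplicativity of covering numbers under products: a $t_1$-covering of $\mathcal{X}_1$ times a $t_2$-covering of $\mathcal{X}_2$ is a $(t_1+t_2)$-covering of $\mathcal{X}_1\times\mathcal{X}_2$ under $\dist_1+\dist_2$. Translating between packing and covering via the classical sandwich $N_{\mathrm{cov}}(t)\leq N(t)\leq N_{\mathrm{cov}}(t/2)$, taking logs, and applying the subadditivity $\sqrt{a+b}\leq \sqrt{a}+\sqrt{b}$, one obtains a pointwise inequality of the form
\begin{equation*}
\sqrt{\log N(t,\mathcal{X},\dist)}\leq \sqrt{\log N(\alpha t,\mathcal{X}_1,\dist_1)}+\sqrt{\log N(\alpha t,\mathcal{X}_2,\dist_2)}
\end{equation*}
for an appropriate scaling factor $\alpha$. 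Integrating this over $t\in(0,\infty)$ and rescaling the dummy variable on each side yields the additive bound on Dudley's integrals. The main obstacle is bookkeeping the factors of $2$ coming from the packing-versus-covering conversion; one must perform the substitution carefully so as to recover a clean additive bound rather than an additive bound multiplied by a stray constant.

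For part (2) (the interval estimate), the packing number of $[0,c]$ can be written down explicitly: placing points equally in $[0,c]$, one checks that $N(t,[0,c],|\cdot|)=1$ for $t>c$, and $N(t,[0,c],|\cdot|)=k$ for $t\in(c/k,\,c/(k-1)]$ with $k\geq 2$. Dudley's integral then collapses into the series
\begin{equation*}
\mathcal{I}([0,c],|\cdot|)=\sum_{k=2}^{\infty}\Bigl(\tfrac{c}{k-1}-\tfrac{c}{k}\Bigr)\sqrt{\log k}=c\sum_{k=2}^{\infty}\frac{\sqrt{\log k}}{k(k-1)}.
\end{equation*}
The task thus reduces to the numerical estimate $\sum_{k\geq 2}\sqrt{\log k}/(k(k-1))\leq 3/2$. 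The dominant contribution is the leading term $\sqrt{\log 2}/2\approx 0.42$, and the tail is easily controlled by an integral comparison $\sum_{k\geq K}\sqrt{\log k}/k^2\leq \int_{K-1}^{\infty}\sqrt{\log t}/t^2\,dt$, which decays fast enough to comfortably close the estimate well below $3/2$.

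I expect the constant-bookkeeping in part (1) to be the main obstacle, since the additive form with no extra multiplicative constant is delicate given that the packing/covering sandwich naively produces an extraneous factor. Part (2), once the closed-form packing number is in hand, is essentially a routine numerical computation.
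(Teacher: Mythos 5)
The paper offers no internal proof of this lemma: it is quoted as a known result with pointers to \cite{wainwright_2019} and to Lemmas G.3--G.4 of \cite{azizian2023exact}, so the comparison can only be against the standard arguments those references contain. Your part (2) is correct and complete in outline: the exact packing number $N(t,[0,c],|\cdot|)=\lfloor c/t\rfloor+1$ gives $\mathcal{I}([0,c],|\cdot|)=c\sum_{k\ge 2}\sqrt{\log k}/(k(k-1))$, and the first few terms plus an integral-comparison tail bound place this sum near $1.05$, comfortably below $3/2$.

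Part (1), however, has a genuine gap, and it sits exactly where you flag it. The packing/covering sandwich unavoidably rescales $t$ inside the logarithm: the best pointwise inequality your route produces is $\sqrt{\log N(t,\mathcal{X},\dist)}\le\sqrt{\log N(t/4,\mathcal{X}_1,\dist_1)}+\sqrt{\log N(t/4,\mathcal{X}_2,\dist_2)}$ (a factor $t/2$ if one works with covering numbers throughout), and after integrating and substituting this yields $\mathcal{I}(\mathcal{X},\dist)\le 4\,(\mathcal{I}(\mathcal{X}_1,\dist_1)+\mathcal{I}(\mathcal{X}_2,\dist_2))$, not the constant-free bound. No careful bookkeeping removes that factor, because the same-scale inequality $N(t,\mathcal{X},\dist)\le N(t,\mathcal{X}_1,\dist_1)\,N(t,\mathcal{X}_2,\dist_2)$ is simply false: take $\mathcal{X}_1=\mathcal{X}_2=\{0,1\}$ with $\dist_i(0,1)=1$; for $t\in(1,2]$ the product contains the $t$-separated pair $\{(0,0),(1,1)\}$ while each factor has packing number $1$. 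This example even violates the integral inequality as stated, since there $\mathcal{I}(\mathcal{X},\dist)=(1+\sqrt{2})\sqrt{\log 2}>2\sqrt{\log 2}=\mathcal{I}(\mathcal{X}_1,\dist_1)+\mathcal{I}(\mathcal{X}_2,\dist_2)$. So with the packing-number convention used here, the constant-free product bound cannot be established in full generality; you should either prove and propagate the version with an explicit multiplicative constant (harmless for the paper, where it only inflates $\alpha$ and $\beta$), or exploit the specific structure of the factors actually needed, namely $[0,c]\times\F$.
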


	\section{Strong duality}
	\label{sec:duality}
	
	In this section, we recall duality results for WDRO \cite{blanchet2019quantifying,gao2023wassersteindistance,zhang2022simple} and its regularized version \cite{azizian2023regularization}. We recall the optimal transport cost with cost $c$ for $(Q,Q') \in \PXi \times \PXi$:
	\begin{equation*}
		W_c(Q,Q') = \inf \accol{\E_{(\xi,\zeta) \sim \pi}[c(\xi,\zeta)] \ : \ \pi \in \mathcal{P}(\Xi \times \Xi),  [\pi]_1 = Q, [\pi]_2 = Q'}.
	\end{equation*}

	\begin{proposition}[Strong duality, standard WDRO] \label{prop:duality_standard} Under \Cref{ass:general_assumptions}, for any $Q \in \PXi$ and $\rho > 0$, then
		\begin{equation*}
			\sup_{W_c(Q,Q') \leq \rho} \E_{\xi \sim Q'}[f(\xi)] = \inf_{\lambda \geq 0} \accol{\lambda \rho + \E_{\xi \sim Q}[\phi(\lambda, f, \xi)]}.
		\end{equation*}
	\end{proposition}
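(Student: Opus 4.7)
The plan is to prove the two inequalities separately: weak duality ($\leq$) via an elementary Lagrangian bound, and strong duality ($\geq$) via a minimax argument enabled by the compactness of $\Xi$.

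For weak duality, I would take any coupling $\pi$ with $[\pi]_1 = Q$ and $\E_\pi[c]\leq\rho$, and any $\lambda\geq 0$. Using the pointwise bound $f(\zeta)-\lambda c(\xi,\zeta)\leq\phi(\lambda,f,\xi)$,
\[
\E_{[\pi]_2}[f] \;=\; \E_\pi[f(\zeta)-\lambda c(\xi,\zeta)] + \lambda\E_\pi[c(\xi,\zeta)] \;\leq\; \lambda\rho + \E_{\xi\sim Q}[\phi(\lambda,f,\xi)].
\]
Taking sup on the left over all $Q' = [\pi]_2$ with $W_c(Q,Q')\leq\rho$ and inf over $\lambda\geq 0$ on the right yields $\leq$.

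For strong duality, I would introduce the Lagrangian $L(\pi,\lambda) := \E_\pi[f(\zeta) - \lambda c(\xi,\zeta)] + \lambda\rho$ on $\Pi_Q \times [0,\lambda_{\max}]$, where $\Pi_Q := \{\pi\in\mathcal{P}(\Xi\times\Xi) : [\pi]_1=Q\}$ is convex and weakly compact (by Prokhorov, using compactness of $\Xi$). I would choose $\lambda_{\max}$ large enough that the dual infimum is attained in $[0,\lambda_{\max}]$: this follows from $\phi(\lambda,f,\xi)\geq f(\xi)$ (take $\zeta=\xi$ and use $c(\xi,\xi)=0$), so $\lambda\rho + \E_Q[\phi(\lambda,f,\xi)] \geq \lambda\rho + \E_Q[f] \to \infty$ as $\lambda\to\infty$. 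The map $L$ is affine in each variable and weakly continuous in $\pi$ (by continuity and boundedness of $f, c$ on the compact set $\Xi\times\Xi$), so Sion's minimax theorem applies, giving $\inf_\lambda\sup_{\pi\in\Pi_Q} L = \sup_{\pi\in\Pi_Q}\inf_\lambda L$. The RHS equals the primal objective after explicitly computing $\inf_\lambda L$ (which is $+\infty$ when $\E_\pi[c] > \rho$ and $\E_{[\pi]_2}[f]$ otherwise); the LHS matches $\inf_\lambda\{\lambda\rho + \E_Q[\phi(\lambda,f,\xi)]\}$ once we establish the identity $\sup_{\pi\in\Pi_Q}\E_\pi[f(\zeta)-\lambda c(\xi,\zeta)] = \E_Q[\phi(\lambda,f,\xi)]$.

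This identity is the main technical step and the chief obstacle: it requires a measurable selection. Decomposing $\pi\in\Pi_Q$ as $Q(d\xi)\otimes K(\xi,d\zeta)$, the supremum over kernels $K$ reduces to pointwise maximization, achieved by a measurable selector $\zeta^*(\xi)\in\argmax_\zeta\{f(\zeta) - \lambda c(\xi,\zeta)\}$. Existence of $\zeta^*$ follows from the Kuratowski--Ryll-Nardzewski selection theorem applied to the outer semicontinuous argmax map (\Cref{lem:argmax_ush}), relying on compactness of $\Xi$ and joint continuity of $c$ and $f$ (\Cref{ass:general_assumptions}). An alternative route, bypassing Sion's theorem, would be a Blanchet--Murthy-style primal construction: extract a dual minimizer $\lambda^*\in[0,\lambda_{\max}]$, and combine the envelope formula (\Cref{th:envelope_formula}) with complementary slackness to build the primal-optimal coupling $\pi^* = Q\otimes\delta_{\zeta^*}$ directly.
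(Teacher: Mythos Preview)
Your proposal is essentially correct and substantially more detailed than the paper's proof, which is a one-line citation: the paper simply invokes Theorem~1 of \cite{blanchet2019quantifying} and notes that \Cref{ass:general_assumptions} implies their Assumptions~1 and~2. So you are reproving a known duality result from scratch, whereas the paper defers entirely to the literature. Your Sion-based argument is a legitimate alternative to the Blanchet--Murthy construction (which you also sketch at the end), and your identification of the measurable-selection step as the crux is exactly right; indeed, the paper makes the same observation in the proof of \Cref{prop:duality_regularized}, where it appeals to the measurable maximum theorem \cite[18.19]{infinite} to obtain the selector $\zeta^*$.

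Two small technical slips to clean up. First, in computing $\inf_\lambda L(\pi,\lambda)$ you write ``$+\infty$ when $\E_\pi[c]>\rho$''; this should be $-\infty$ (you are minimizing an affine function with negative slope), and in any case on the truncated interval $[0,\lambda_{\max}]$ the infimum is finite. Second, and relatedly, the truncation to $[0,\lambda_{\max}]$ creates a mismatch on the $\sup_\pi\inf_\lambda$ side: for infeasible $\pi$ the inner infimum is $\E_{[\pi]_2}[f]-\lambda_{\max}(\E_\pi[c]-\rho)$, which need not be dominated by the primal value uniformly in $\pi$. The fix is easy: either apply Sion directly on $\Pi_Q\times[0,\infty)$ (Sion only requires compactness on one side, and $\Pi_Q$ is weakly compact), or keep the truncation and observe that $\inf_{\lambda\in[0,\lambda_{\max}]}L\geq\inf_{\lambda\geq 0}L$ trivially, so Sion yields $\text{dual}=\sup_\pi\inf_{[0,\lambda_{\max}]}L\geq\sup_\pi\inf_{[0,\infty)}L=\text{primal}$, which together with weak duality closes the gap.
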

	\begin{proof}
		This is an application of Theorem 1 from \cite{blanchet2019quantifying}. In particular,  Assumptions 1 and 2 from \cite{blanchet2019quantifying} are satisfied through \Cref{ass:general_assumptions}.
	\end{proof}
	
	\begin{proposition}[Strong duality, regularized WDRO] \label{prop:duality_regularized} Under \Cref{ass:general_assumptions}, for any $Q \in \PXi$ and $\rho > 0$,  if there exists $\pi \in \PXixi$ such that $\E_\pi[c] + \tau \KL(\pi\|\pi_0) < \rho$, then
		\begin{equation}
			\label{eq:dualityregularized}
			\sup_{\substack{\pi \in \mathcal{P}(\Xi \times \Xi), [\pi]_1 = Q \\ \E_{\pi}[c] + \tau \operatorname{KL}(\pi \| \pi_0) \leq \rho}} \accol{\E_{\xi \sim [\pi]_2} [f(\zeta)] - \epsilon \operatorname{KL}(\pi \| \pi_0)} =  \inf_{\lambda \geq 0} \accol{\lambda \rho + \E_{\xi \sim Q}[\phi^{\tau,\epsilon}(\lambda,f,\xi)]}.
		\end{equation}
		In particular, if $\rho > m_c$, \eqref{eq:dualityregularized} holds.
	\end{proposition}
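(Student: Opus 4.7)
The plan is to reduce the equality to the regularized WDRO strong duality theorem of \cite{azizian2023regularization}, adapted to our continuous-compact framework, and then handle the Slater sufficient condition separately.

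\textbf{Main duality via existing result.} First, I would invoke the strong duality theorem from \cite{azizian2023regularization} for doubly-regularized WDRO, which establishes the sought equality under regularity conditions together with a Slater-type qualification on the primal constraint. To apply it, I must verify that \Cref{ass:general_assumptions} implies those conditions: compactness of $(\Xi,d)$ ensures tightness and boundedness of $c$ on $\Xi \times \Xi$; joint continuity of $c$ and continuity of $f$ guarantee that the integrand $e^{(f(\zeta) - \lambda c(\xi,\zeta))/(\epsilon + \lambda \tau)}$ is bounded and jointly continuous, so that $\phi^{\tau,\epsilon}(\lambda, f, \xi)$ is finite-valued, continuous in both $\lambda \geq 0$ and $\xi$, and integrable under any $Q \in \PXi$. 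The Slater condition assumed in the statement provides the constraint qualification allowing the exchange of $\sup_\pi$ and $\inf_\lambda$.

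\textbf{Slater from $\rho > m_c$.} For the second claim, when $\rho > m_c$, I would exhibit a strictly feasible coupling built from the regular conditional kernel $\pi_0(\cdot\,|\,\xi)$: set $\pi(\mathrm{d}\xi,\mathrm{d}\zeta) := Q(\mathrm{d}\xi)\,\pi_0(\mathrm{d}\zeta\,|\,\xi)$, which has first marginal $Q$ by construction. Under the natural disintegration convention (in which the reference coupling is compatible with first marginal $Q$, so that this construction reproduces $\pi_0$), we have $\KL(\pi\|\pi_0) = 0$ and
\begin{equation*}
\E_\pi[c] \;=\; \E_{\xi \sim Q}\,\E_{\zeta \sim \pi_0(\cdot|\xi)}[c(\xi,\zeta)] \;\leq\; m_c \;<\; \rho,
\end{equation*}
so that $\E_\pi[c] + \tau\KL(\pi\|\pi_0) < \rho$, yielding strict feasibility.

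\textbf{Main obstacle.} The principal technical obstacle is to verify cleanly that the duality theorem of \cite{azizian2023regularization} applies in the full generality of \Cref{ass:general_assumptions}, i.e., for arbitrary compact $\Xi$, arbitrary continuous non-negative cost $c$, and arbitrary reference coupling $\pi_0$ admitting a well-defined regular conditional. The underlying ingredients---a Donsker--Varadhan variational identification of the optimal Gibbs conditional $\pi(\cdot|\xi) \propto e^{(f - \lambda c(\xi,\cdot))/(\epsilon + \lambda\tau)} \pi_0(\cdot|\xi)$, convexity and lower-semicontinuity of the primal objective in $\pi$, and the min-max interchange under Slater---are standard but demand care in this weaker structural setting, especially regarding measurability of the disintegration and uniform integrability as $\lambda \to 0^+$ (where $\phi^{\tau,\epsilon}$ degenerates to $\epsilon \log \E_{\pi_0(\cdot|\xi)}[e^{f/\epsilon}]$). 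Once these points are handled, the rest of the argument is a direct transposition of the proof in \cite{azizian2023regularization}.
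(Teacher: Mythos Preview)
Your overall strategy coincides with the paper's: invoke the strong duality theorem of \cite{azizian2023regularization}, check that \Cref{ass:general_assumptions} suffices for its hypotheses, and verify the Slater condition by exhibiting the reference coupling $\pi_0$ (or equivalently $Q(\di\xi)\,\pi_0(\di\zeta\,|\,\xi)$) as a strictly feasible point when $\rho>m_c$. That part is fine.

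Where your proposal drifts is in the ``main obstacle'' paragraph. The concerns you raise (Donsker--Varadhan identification of the Gibbs conditional, lower semicontinuity in $\pi$, uniform integrability as $\lambda\to 0^+$) are not the actual points at which the proof of \cite{azizian2023regularization} uses Euclidean structure, and none of them is what needs repair. The paper pinpoints exactly two places in \cite{azizian2023regularization} that rely on $\Xi\subset\R^d$ and shows how to bypass them in a general compact metric space:
\begin{itemize}
\item In the proof of their Theorem~2.1, the interchange
\[
\sup_{\zeta(\cdot)\ \text{measurable}} \E_{\xi\sim P}[\varphi(\xi,\zeta(\xi))] \;\geq\; \E_{\xi\sim P}\Big[\sup_{\zeta\in\Xi}\varphi(\xi,\zeta)\Big]
\]
is justified there via normal integrands in the sense of \cite{Rockafellar_1998}. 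In a compact metric space one instead invokes the measurable maximum theorem (e.g.\ \cite[18.19]{infinite}) to obtain a measurable selector $\zeta^*(\xi)\in\argmax_\zeta\varphi(\xi,\zeta)$.
\item In the proof of their Theorem~3.1, the function $g^\varphi(\xi)=\sup_{\zeta\in\Xi}\varphi(\xi,\zeta)$ is approximated by infimal convolutions to ensure regularity. Here $g^\varphi$ is already continuous by compactness of $\Xi$ and joint continuity of $\varphi$ (Berge's maximum theorem, \Cref{lem:argmax_ush}), so that step is unnecessary.
\end{itemize}
The paper also notes that the convexity of $\Xi$ assumed in \cite{azizian2023regularization} is not actually used. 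Once these two substitutions are made, the cited proof goes through verbatim; no separate argument about $\lambda\to 0^+$ or uniform integrability is required.
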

	\begin{proof}
		This is an application of Theorem 3.1 from \cite{azizian2023regularization}, which is a corollary to Theorem 2.1 \cite{azizian2023regularization}. In particular, if $\rho > m_c$ the coupling $\pi_0$ satisfies $\E_{\pi_0}[c] + \tau \KL(\pi_0 \| \pi_0) = \E_{\pi_0}[c] \leq m_c < \rho$.

		Note that the proofs of Theorems 2.1 and 3.1 from \cite{azizian2023regularization} can be easily extended to a general compact metric space $(\Xi,d)$, without being rewritten entirely. Precisely, only two arguments in their proofs rely on the real-valued setting \cite{Rockafellar_1998}  but can be directly extended to a general metric space as follows:
		\begin{itemize}
			\item In the proof of Theorem 2.1 from \cite{azizian2023regularization}, one needs to justify
			
			\begin{equation}
				\label{eq:duality_regularized_sup}
				\sup \left\{ \ExiP \left[\varphi(\xi,\zeta(\xi))\right] \ : \ \zeta : \Xi \to \Xi \text{ measurable} \right\} \geq \ExiP\left[\sup_{\zeta \in \Xi} \varphi(\xi,\zeta)\right].
			\end{equation}
			To this end, the authors use the notion of normal integrand from \cite{Rockafellar_1998}.  Actually, \eqref{eq:duality_regularized_sup} holds true in a compact metric space: if $\varphi$ is continuous, then by compactness of $\Xi$, the set-valued map $\xi \mapsto \argmax_{\zeta \in \Xi} \varphi(\xi,\zeta)$ admits a measurable selection $\zeta^*$, by the measurable maximum theorem, see 18.19 in \cite{infinite}. Such a selection $\zeta^*$ then satisfies $\varphi(\xi, \zeta^*(\xi)) = \sup_{\zeta \in \Xi} \varphi(\xi,\zeta)$ for all $\xi \in \Xi$, hence the result.
			\item In the proof of Theorem 3.1 from \cite{azizian2023regularization}, $g^{\varphi} = \sup_{\zeta \in \Xi} \varphi(\cdot, \zeta)$ is actually continuous by \Cref{lem:argmax_ush} and the approximation by the infimal convolutions $(g_k^\varphi)_{k \in \N}$ need not be done.
		\end{itemize}
		Note that the convexity of $\Xi$ is not required (although stated in Assumption 1 from \cite{azizian2023regularization}).
	\end{proof}

	\section{Concentration constants}
	\label{sec:concentrationconstants}
	
	In this part, we compute some constants in view of applying \Cref{lem:concentration} for the main proofs of \Cref{proofmainresults}.

	\subsection{Standard WDRO}

	For standard WDRO, we compute bounds (i) and global Lipschitz constants (ii) for $\phi$ and $\psi$.

	\begin{lemma}[Concentration conditions for WDRO] \label{lem:constant_standard} we have the following:
		
		\begin{enumerate}
			\item 
			\begin{enumerate}[label=(\roman*)]
				\item For all $\lambda \geq 0$, $f \in \mathcal{F}$ and $\xi \in \Xi$, $\phi(\lambda, f, \xi) \in [- \maxF, \maxF].$
				\item For all  $\lambda \geq 0$ and $\xi \in \Xi$, $f \mapsto \phi(\lambda, f, \xi)$ is Lipschitz continuous on $\mathcal{F}$ with constant $1$. 
			\end{enumerate}

			\item \begin{enumerate}[label=(\roman*)]
				\item  Given $\lambdalow > 0$, for all $\mu \in (0, \lambdalow^{-1}]$ and $f \in \mathcal{F}$,
				$\psi(\mu, f, \xi) \in \left[-\frac{\maxF}\lambdalow, \frac{\maxF}\lambdalow\right]$.
				\item For all $\xi \in \Xi$, $(\mu, f) \mapsto \psi(\mu, f, \xi)$ is Lipschitz continuous on $(0, \lambdalow^{-1}]$ with constant $\maxF + \lambdalow^{-1}$.
			\end{enumerate}

		\end{enumerate}
	\end{lemma}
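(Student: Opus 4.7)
The whole lemma reduces to elementary manipulations of the two auxiliary objects
\[
\phi(\lambda,f,\xi) = \sup_{\zeta \in \Xi}\{f(\zeta) - \lambda c(\xi,\zeta)\}, \qquad \psi(\mu,f,\xi) = \sup_{\zeta \in \Xi}\{\mu f(\zeta) - c(\xi,\zeta)\},
\]
the second identity coming from pulling the factor $\mu$ inside the supremum defining $\psi(\mu,f,\xi) = \mu\,\phi(\mu^{-1},f,\xi)$. I would prove the four claims one by one in the order listed, with the main tools being only non-negativity of $c$, the fact that $c(\xi,\xi)=0$, and the universal inequality $|\sup g - \sup h| \le \sup |g-h|$.

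For part 1.(i), the upper bound follows by dropping the $-\lambda c(\xi,\zeta) \le 0$ term and using $\sup_\zeta f(\zeta) \le \|\F\|_\infty$. The lower bound is obtained by evaluating the supremum at the feasible point $\zeta = \xi$, which gives $\phi(\lambda,f,\xi) \ge f(\xi) - \lambda c(\xi,\xi) = f(\xi) \ge -\|\F\|_\infty$. Part 1.(ii) is a direct application of $|\sup g - \sup h| \le \sup |g - h|$ to the two suprema defining $\phi(\lambda,f,\xi)$ and $\phi(\lambda,f',\xi)$: the $\lambda c(\xi,\zeta)$ terms cancel and one is left with $\sup_\zeta |f(\zeta)-f'(\zeta)| = \|f-f'\|_\infty$.

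For part 2.(i), I would work from the rewriting $\psi(\mu,f,\xi) = \sup_\zeta\{\mu f(\zeta) - c(\xi,\zeta)\}$: the upper bound drops the non-positive $-c(\xi,\zeta)$ to get $\mu\,\|\F\|_\infty \le \|\F\|_\infty/\lambdalow$, and the lower bound again plugs $\zeta=\xi$ to obtain $\mu f(\xi) \ge -\mu\|\F\|_\infty \ge -\|\F\|_\infty/\lambdalow$. Part 2.(ii) is the only step with a small calculation: bound the difference of the two suprema by $\sup_\zeta |\mu_1 f_1(\zeta) - \mu_2 f_2(\zeta)|$, then insert the cross term $\mu_1 f_2(\zeta)$ and apply the triangle inequality, giving
\[
|\psi(\mu_1,f_1,\xi) - \psi(\mu_2,f_2,\xi)| \le \mu_1\,\|f_1-f_2\|_\infty + |\mu_1-\mu_2|\,\|f_2\|_\infty \le \lambdalow^{-1}\|f_1-f_2\|_\infty + \|\F\|_\infty|\mu_1-\mu_2|.
\]
With the product metric $(\mu,f)\mapsto |\mu-\mu'| + \|f-f'\|_\infty$ (the one used in the Dudley estimate of Lemma on Cartesian products), this bound is exactly $(\|\F\|_\infty + \lambdalow^{-1})$-Lipschitz.

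I do not expect any real obstacle: the only point worth being careful about is keeping the variable over which the supremum is taken and the variable along which Lipschitzness is being measured straight, and choosing the right rewriting of $\psi$ so that $\mu$ sits at a single spot inside the supremum. Joint continuity and compactness from \Cref{ass:general_assumptions} ensure that all suprema are attained, so plugging $\zeta=\xi$ for the lower bound is legitimate.
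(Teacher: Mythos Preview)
Your proposal is correct and follows essentially the same approach as the paper's proof: the bounds in 1(i) and 2(i) come from dropping the non-positive cost term and plugging $\zeta=\xi$, the Lipschitz estimate in 1(ii) uses the same $|\sup g-\sup h|\le \sup|g-h|$ argument, and for 2(ii) the paper phrases your cross-term decomposition $\mu_1 f_1-\mu_2 f_2=\mu_1(f_1-f_2)+(\mu_1-\mu_2)f_2$ as a composition $u\circ v$ with $u(h)=\sup_\zeta\{h(\zeta)-c(\xi,\zeta)\}$ and $v(\mu,f)=\mu f$, which is the same computation. One minor remark: you do not need attainment of the supremum to justify plugging $\zeta=\xi$ for the lower bound; any feasible choice bounds a supremum from below regardless.
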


	\begin{proof}

		1. (i) Let $(\lambda, f, \xi) \in \R_+ \times \mathcal{F} \times \Xi$. Recall that $\phi(\lambda, f, \xi) := \sup_{\zeta \in \Xi} \accol{f(\zeta) - \lambda c(\xi, \zeta)}$. Since $c$ is nonnegative, we have $\phi(\lambda, f, \xi) \leq \maxF$. On the other hand, since $c(\xi, \xi) = 0$, we also have $\phi(\lambda, f, \xi) \geq f(\xi)  \geq - \maxF$. Finally, we have $\phi(\lambda, f, \xi) \in [- \maxF, \maxF]$.

		(ii) Let $\lambda \geq 0$, $\xi \in \Xi$ and $(f,f') \in \mathcal{F} \times \mathcal{F}$. For all $\zeta \in \Xi$, we have
		\begin{align*}
			f(\zeta) - \lambda c(\xi, \zeta) - \phi(\lambda, f', \xi) & \leq f(\zeta) - \lambda c(\xi, \zeta) - ( f'(\zeta) - \lambda c(\xi, \zeta)) \\ & \leq f(\zeta) - f'(\zeta) \\ & \leq \|f - f'\|_\infty.
		\end{align*}
		Taking the supremum over $\zeta \in \Xi$ on the left-hand side gives $\phi(\lambda, f, \xi)- \phi(\lambda, f', \xi) \leq \|f - f'\|_\infty$. Permuting the roles of $f$ and $f'$ yields $|\phi(\lambda, f, \xi)- \phi(\lambda, f', \xi)| \leq \|f - f'\|_\infty$. We proved that $\phi(\lambda, \cdot, \xi)$ is $1$-Lipschitz continuous.

		2. (i) Now, let $\lambdalow > 0$ and let $(\mu, f, \xi) \in (0, \lambdalow^{-1}] \times \mathcal{F} \times \Xi$ be arbitrary. Then we have
		
		\begin{equation*}
			\mu \phi(\mu^{-1}, f, \xi) = \sup_{\zeta \in \Xi} \accol{\mu f(\zeta) - c(\xi,\zeta)} \leq \frac{\maxF}\lambdalow.
		\end{equation*}
		
		On the other hand, using $c(\xi, \xi) = 0$ we obtain
		
		\begin{equation*}
			\sup_{\zeta \in \Xi} \accol{\mu f(\zeta) - c(\xi,\zeta)} \geq \mu f(\xi) \geq -\frac{\maxF}\lambdalow,
		\end{equation*}

		whence we have $\mu \phi(\mu^{-1}, f, \xi) \in \left[-\frac{\maxF}\lambdalow,\frac{\maxF}\lambdalow\right]$.

		(ii) Toward a proof of 2. (ii), let $\lambdalow > 0$, and $\xi \in \Xi$ and $\mu \in (0, \lambdalow]$. Remark that $\mu \phi( \mu^{-1},f,\xi) = \sup_{\zeta \in \Xi} \accol{\mu f(\zeta) - c(\xi, \zeta)}$. The function $(\mu, f) \mapsto \mu \phi (\mu^{-1},f,\xi)$ write as a composition $u \circ v$ where $u(h) := \sup_{\zeta \in \Xi} \accol{h(\zeta) - c (\xi, \zeta)}$ for $h \in C(\Xi, \R)$, and $v(\mu, f) := \mu f$ for $\mu \in (0, \lambdalow^{-1}]$. $u$ is 1-Lipschitz continuous with respect to $\|\cdot\|_{\infty}$. As to $v$, we can write 
		\begin{equation*}
			\mu f - \mu' f' = \mu (f - f') + f'(\mu - \mu'),
		\end{equation*}
		whence $v$ is clearly $(\maxF +\lambdalow^{-1})$-Lipschitz continuous on $(0, \lambdalow^{-1}] \times \mathcal{F}$. By composition, $u \circ v$ is Lipschitz continuous with constant $\maxF + \lambdalow^{-1}$.
	\end{proof}
	
	\subsection{Regularized WDRO}
	
	We now compute the analogous constants of the regularized setting. 
	
	We will use the following convexity lemma repeatedly:
	\begin{lemma}[{\cite[Lem. G.7]{azizian2023exact}}] Let $g : \Xi \to \R$ be a measurable bounded function and $Q \in \mathcal{P}(\Xi)$. Then one has the inequality
		\label{lem:ineq_logexp}
		\begin{equation*}
			\log \E_{\zeta \sim Q} \left[e^{g(\zeta)}\right] \leq \frac{\E_{\zeta \sim Q}[g(\zeta) e^{g(\zeta)}]}{\E_{\zeta \sim Q}[e^{g(\zeta)}]}.
		\end{equation*}
	\end{lemma}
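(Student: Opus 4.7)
The plan is to derive the inequality from a single application of Jensen's inequality with a well-chosen convex function. Specifically, consider $\varphi : (0,\infty) \to \R$ defined by $\varphi(x) = x \log x$; this function is strictly convex (its second derivative is $1/x > 0$). Applying Jensen to the random variable $X = e^{g(\zeta)}$ with $\zeta \sim Q$ yields
\[
\varphi\!\left(\E_Q[e^g]\right) \;\leq\; \E_Q\!\left[\varphi(e^g)\right],
\]
which upon unpacking reads
\[
\E_Q[e^g]\,\log \E_Q[e^g] \;\leq\; \E_Q\!\left[e^g \log e^g\right] \;=\; \E_Q[g\,e^g].
\]
Since $g$ is bounded, $e^g$ is bounded away from $0$, so $\E_Q[e^g] > 0$, and dividing both sides by this strictly positive quantity gives exactly the desired inequality.

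Alternatively, the same inequality can be viewed as a repackaging of the non-negativity of the Kullback--Leibler divergence. Introduce the tilted measure $Q'$ defined by $\di Q' = \tfrac{e^g}{\E_Q[e^g]} \di Q$; a direct computation shows
\[
\KL(Q' \| Q) \;=\; \E_{Q'}\!\left[\log \tfrac{\di Q'}{\di Q}\right] \;=\; \frac{\E_Q[g\,e^g]}{\E_Q[e^g]} \;-\; \log \E_Q[e^g],
\]
and then $\KL(Q' \| Q) \geq 0$ rearranges to the statement. I would choose the Jensen route in the write-up as it is the shortest path and requires no auxiliary construction.

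There is essentially no obstacle: boundedness and measurability of $g$ make all integrals finite, and the lower bound $e^g \geq e^{-\|g\|_\infty} > 0$ ensures $\E_Q[e^g] > 0$, so the final division step is justified. The only care required is a one-line remark that $\varphi(x) = x\log x$ is convex on the range of $e^g$, which is a compact subset of $(0,\infty)$ by boundedness of $g$.
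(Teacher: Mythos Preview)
Your proof is correct. The paper does not give its own proof of this lemma; it simply cites it from \cite{azizian2023exact}, so there is no in-paper argument to compare against. Your Jensen route with $\varphi(x)=x\log x$ is the standard one-line justification, and the boundedness check ensuring $\E_Q[e^g]>0$ is exactly the care needed.
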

	
	The following is the regularized version of \Cref{lem:constant_standard}:
	
	\begin{lemma}[Concentration conditions for regularized WDRO] \label{lem:constant_regularized} Let $\xi \in \Xi$. Then
		
		\begin{enumerate}
			\item \begin{enumerate}[label=(\roman*)]
				\item For all $\lambda \geq 0$ and $f \in \mathcal{F}$, $\phi^{\tau,\epsilon}(\lambda, f, \xi) \in \left[- \maxF - \lambda m_c, \maxF \right].$
				\item For all $\lambda \geq 0$, $f \mapsto \phi^{\tau,\epsilon}(\lambda, f, \xi)$ is Lipschitz continuous with constant $1$. 
			\end{enumerate}

			\item \begin{enumerate}[label=(\roman*)]
				\item  Given $\lambdalow > 0$, for all $\mu \in [\lambdaup^{-1}, \lambdalow^{-1}]$ and $f \in \mathcal{F}$, $\psi^{\tau,\epsilon}(\mu,f,\xi) \in \left[- \frac{\maxF}\lambdalow - m_c, \frac{\maxF}\lambdalow \right].$
				\item Given $\lambdaup > 0$, $(\mu, f) \mapsto \psi^{\tau, \epsilon}(\mu, f, \xi)$ is Lipschitz continuous on $[\lambdaup^{-1}, \lambdalow^{-1}] \times \mathcal{F}$ with constant $\maxF + \lambdalow^{-1} + \left(\frac{\lambdaup \epsilon}{\epsilon + \lambdaup \tau} \right) m_c$.
			\end{enumerate}

		\end{enumerate}
	\end{lemma}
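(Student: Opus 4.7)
The plan is to establish Part 1 directly from Jensen's inequality and the nonnegativity of $c$, then obtain Part 2 from Part 1 by rescaling, with the only substantive work being the Lipschitz estimate in $\mu$, where a cancellation trick based on \Cref{lem:ineq_logexp} is the key. Throughout, let $s(\mu) := \mu \epsilon + \tau$ and $h_\mu(\zeta) := \frac{\mu f(\zeta) - c(\xi,\zeta)}{s(\mu)}$, so that $\psi^{\tau,\epsilon}(\mu,f,\xi) = s(\mu)\, \log \Ezetaxi[e^{h_\mu(\zeta)}]$, and denote $\pi_\mu \in \mathcal{P}(\Xi)$ the Gibbs probability measure $d\pi_\mu \propto e^{h_\mu}\, d\pi_0(\cdot|\xi)$.

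\textbf{Part 1.} For the upper bound in 1.(i), use $c \geq 0$ to get $\tfrac{f-\lambda c}{\epsilon+\lambda\tau} \leq \tfrac{\maxF}{\epsilon+\lambda\tau}$, hence $\phi^{\tau,\epsilon}(\lambda,f,\xi) \leq \maxF$. For the lower bound, apply Jensen's inequality $\log \E[e^X] \geq \E[X]$ to the reference $\pi_0(\cdot|\xi)$, which yields $\phi^{\tau,\epsilon}(\lambda,f,\xi) \geq \E_{\pi_0}[f - \lambda c] \geq -\maxF - \lambda m_c$. For 1.(ii), compute the directional derivative $\partial_{f \to h}\phi^{\tau,\epsilon}(\lambda,f,\xi) = \E_{\pi_\lambda}[h]$, which is bounded by $\|h\|_\infty$ since $\pi_\lambda$ is a probability measure, giving $1$-Lipschitzness.

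\textbf{Part 2.(i) and $f$-Lipschitzness of 2.(ii).} Since $\psi^{\tau,\epsilon}(\mu,f,\xi) = \mu\, \phi^{\tau,\epsilon}(\mu^{-1},f,\xi)$, applying 1.(i) with $\lambda = \mu^{-1}$ and $\mu \in [\lambdaup^{-1},\lambdalow^{-1}]$ gives $\psi^{\tau,\epsilon} \leq \mu \maxF \leq \maxF/\lambdalow$ and $\psi^{\tau,\epsilon} \geq -\mu\maxF - m_c \geq -\maxF/\lambdalow - m_c$. Likewise, 1.(ii) immediately implies $\psi^{\tau,\epsilon}(\mu,\cdot,\xi)$ is $\mu$-Lipschitz in $f$, hence $\lambdalow^{-1}$-Lipschitz on the specified interval. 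It remains to bound $\partial_\mu \psi^{\tau,\epsilon}(\mu,f,\xi)$ for fixed $f$.

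\textbf{Part 2.(ii): Lipschitzness in $\mu$.} Direct differentiation yields
\begin{equation*}
\partial_\mu \psi^{\tau,\epsilon}(\mu,f,\xi) \;=\; \epsilon\, \log \Ezetaxi[e^{h_\mu}] \;+\; \frac{1}{s(\mu)}\, \E_{\pi_\mu}\!\left[\tau f(\zeta) + \epsilon c(\xi,\zeta)\right],
\end{equation*}
using $\partial_\mu h_\mu = (\tau f + \epsilon c)/s(\mu)^2$. The naive bound would require controlling $\E_{\pi_\mu}[c]$ on the Gibbs distribution, which is awkward; this is \textbf{the main obstacle}. The trick is to apply \Cref{lem:ineq_logexp} with respect to $\pi_\mu$, giving $\log \Ezetaxi[e^{h_\mu}] \leq \E_{\pi_\mu}[h_\mu] = \frac{1}{s(\mu)}(\mu\, \E_{\pi_\mu}[f] - \E_{\pi_\mu}[c])$. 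Multiplying by $\epsilon$ and adding the second term of the derivative, \emph{the $\E_{\pi_\mu}[c]$ contributions cancel}, leaving $\partial_\mu \psi^{\tau,\epsilon} \leq \frac{\epsilon\mu + \tau}{s(\mu)}\, \E_{\pi_\mu}[f] = \E_{\pi_\mu}[f] \leq \maxF$. For the matching lower bound, apply Jensen in the other direction against $\pi_0(\cdot|\xi)$: $\log \Ezetaxi[e^{h_\mu}] \geq \E_{\pi_0}[h_\mu] \geq -(\mu\maxF + m_c)/s(\mu)$, combined with the trivial lower bounds $\tau \E_{\pi_\mu}[f]/s \geq -\tau \maxF/s$ and $\epsilon \E_{\pi_\mu}[c]/s \geq 0$, yielding $\partial_\mu \psi^{\tau,\epsilon} \geq -\maxF - \epsilon m_c / s(\mu)$. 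Since $s(\mu) \geq \lambdaup^{-1}\epsilon + \tau$ on the given interval, one concludes $|\partial_\mu \psi^{\tau,\epsilon}| \leq \maxF + \tfrac{\lambdaup \epsilon}{\epsilon + \lambdaup \tau} m_c$. Adding the $f$-Lipschitz constant $\lambdalow^{-1}$ via the triangle inequality for the product metric gives the stated global Lipschitz constant.
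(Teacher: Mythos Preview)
Your proposal is correct and follows essentially the same approach as the paper's proof: Jensen for the bounds in 1.(i) and 2.(i), the directional-derivative computation for 1.(ii), and the same cancellation trick via \Cref{lem:ineq_logexp} for the upper bound on $\partial_\mu \psi^{\tau,\epsilon}$ together with Jensen against $\pi_0(\cdot|\xi)$ for the lower bound. The only cosmetic difference is that in 1.(i) you apply Jensen once to the full exponent, while the paper first splits off $-\maxF$ and applies Jensen only to the $c$-part; both yield the same bound.
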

	\begin{proof}

		1. (i) Let $(\lambda, f, \xi) \in \R_+ \times \mathcal{F} \times \Xi$. For all $\zeta \in \Xi$, $e^{\frac{f(\zeta) - \lambda c(\xi, \zeta)}{\epsilon + \lambda \tau}} \leq e^{\frac{\|\mathcal{F}\|_{\infty}}{\epsilon + \lambda \tau}}$. This gives
		\begin{equation}
			\label{eq:boundsup_phi_regularized}
			\phi^{\tau,\epsilon}(\lambda, f, \xi) \leq (\epsilon + \lambda \tau) \log \Ezetaxi \left[e^{\frac{\maxF}{\epsilon + \lambda \tau}}\right] = \maxF.
		\end{equation}
		On the other hand, $
		e^{\frac{f(\zeta) - \lambda c(\xi, \zeta)}{\epsilon + \lambda \tau}} \geq e^{\frac{- \|\mathcal{F}\|_{\infty} - \lambda c(\xi, \zeta)}{\epsilon + \lambda \tau}}$, which gives
		\begin{align}
			\label{eq:boundinf_phi_regularized}
			\phi^{\tau,\epsilon}(\lambda, f, \xi)  & \geq  (\epsilon + \lambda \tau) \log \left( e^{-\frac{\maxF}{\epsilon + \lambda \tau}} \Ezetaxi \left[ e^{-\frac{\lambda c(\xi, \zeta)}{\epsilon + \lambda \tau}} \right] \right) \nonumber \\
			& \geq - \maxF + (\epsilon + \lambda \tau) \log \Ezetaxi \left[e^{-\frac{\lambda c(\xi, \zeta)}{\epsilon + \lambda \tau}}\right] \nonumber \\ 
			& \geq - \maxF - \lambda m_c,
		\end{align}
		where for the last inequality we used Jensen's inequality on the convex function $s \mapsto e^{-\frac{\lambda s}{\epsilon + \lambda \tau}}$.
		
		Combining \eqref{eq:boundsup_phi_regularized} and \eqref{eq:boundinf_phi_regularized} gives 
		\begin{equation*}
			\phi^{\tau,\epsilon}(\lambda, f, \xi) \in [- \maxF - \lambda m_c, \maxF].
		\end{equation*}
		(ii) Let $\xi \in \Xi$ and $\lambda \geq 0$. To compute the Lipschitz constant of $f \mapsto \phi^{\tau, \epsilon}(\lambda, f, \xi)$, we compute the derivative of $h_v: t \mapsto \phi^{\tau, \epsilon}(\lambda, f + t v, \xi)$ where $t \in \R$ and for an arbitrary direction $v \in \mathcal{F}$. We have
		
		$$h_v(t) = (\epsilon + \lambda \tau) \log \E_{\zeta \sim \pi_0(\cdot | \xi)} \left[e^{\frac{f(\zeta) + t v(\zeta) - \lambda c(\xi, \zeta)}{\epsilon + \lambda \tau}}\right].$$
		
		It is easy to verify that $h_v'(t) = \E_{\zeta \sim \pi_0^{\frac{f + tv - \lambda c(\xi, \cdot)}{\epsilon + \lambda \tau}} (\cdot | \xi)} \left[v(\zeta)\right],$ whence $|h'_v(t)| \leq \|v\|_{\infty}$. This means that $\phi^{\tau, \epsilon}(\lambda, \cdot, \xi)$ has Lipschitz constant $1$.

		2. (i) Let $\lambdalow>0$ and $(\mu,f,\xi) \in (0, \lambdalow^{-1}] \times \mathcal{F} \times \Xi$.  $\lambda$.  We deduce from \eqref{eq:boundsup_phi_regularized} and \eqref{eq:boundinf_phi_regularized}, with $\lambda = \mu^{-1}$, that
		\begin{equation*}
			\mu \phi^{\tau,\epsilon}(\mu^{-1}, f, \xi) \in \left[-\frac{\maxF}\lambdalow - m_c, \frac{\maxF}\lambdalow\right].
		\end{equation*}
		(ii) Now, let $\xi \in \Xi$. Our goal is to compute a Lipschitz constant of $(\mu, f) \mapsto \mu \phi^{\tau,\epsilon}(\mu^{-1}, f, \xi)$ on $[\lambdaup^{-1}, \lambdalow^{-1}] \times \mathcal{F}$. We first compute a Lipschitz constant of 
		\begin{equation*}
			h_f : \mu \mapsto \mu \phi^{\tau, \epsilon}(\mu^{-1}, f, \xi) = (\mu\epsilon + \tau) \log \E_{\zeta \sim \pi_0(\cdot | \xi)}\left[ e^{\frac{\mu f(\zeta) - c(\xi, \zeta)}{\mu \epsilon + \tau}} \right] 
		\end{equation*}
		on $[\lambdaup^{-1}, \lambdalow^{-1}]$, for an arbitrary $f \in \mathcal{F}$. The derivative of $h_f$ is 
		\begin{equation*}
			h_f'(\mu) = \frac{1}{\mu \epsilon + \tau} \frac{\E_{\zeta \sim \pi_0(\cdot | \xi)} \left[ (\epsilon c(\xi, \zeta) + \tau f(\zeta)) e^{\frac{\mu f(\zeta) - c(\xi, \zeta)}{\mu \epsilon + \tau}} \right]}{\E_{\zeta \sim \pi_0(\cdot | \xi)}\left[e^{\frac{\mu f(\zeta) - c(\xi, \zeta)}{\mu \epsilon + \tau}}\right]} 
			+ \epsilon \log \E_{\zeta \sim \pi_0(\cdot | \xi)} \left[e^{\frac{\mu f(\zeta) - c(\xi, \zeta)}{\mu \epsilon + \tau}}\right],
		\end{equation*}
		which we write 
		\begin{equation}
			\label{eq:derivative_h_mu}
			h_f'(\mu) = \E_{\pi_0^\frac{\mu f - c(\xi, \cdot)}{\mu \epsilon + \tau} (\cdot | \xi)}   \left[\frac{\epsilon c (\xi,\zeta) + \tau f(\zeta)}{\mu \epsilon + \tau}\right]
			+ \epsilon \log \E_{\zeta \sim \pi_0(\cdot | \xi)} \left[e^{\frac{\mu f(\zeta) - c(\xi, \zeta)}{\mu \epsilon + \tau}} \right].
		\end{equation}

		We bound $h'_f(\mu)$ above. By \Cref{lem:ineq_logexp} with $Q = \pi_0(\cdot|\xi)$ and $g = \frac{\mu f - c(\xi, \cdot)}{\mu \epsilon + \tau}$, we have  that
		\begin{equation*}
			\epsilon \log \E_{\zeta \sim \pi_0(\cdot | \xi)} \left[e^{\frac{\mu f(\zeta) - c(\xi, \zeta)}{\mu \epsilon + \tau}}\right] 
			\leq \E_{\zeta \sim \pi_0^{\frac{\mu f - c(\xi,\cdot)}{\mu \epsilon + \tau}}(\cdot | \xi)} \left[\frac{\epsilon \mu f(\zeta) - \epsilon c(\xi, \zeta)}{\mu \epsilon + \tau}\right]
		\end{equation*}
		which gives $h_f'(\mu) \leq \E_{\zeta \sim \pi_0^{\frac{\mu f - c(\xi, \cdot)}{\mu \epsilon + \tau}}}[f(\zeta)] \leq \|\mathcal{F}\|_{\infty}$.

		Now we bound $h_f'(\mu)$ below. We start with the first term in \eqref{eq:derivative_h_mu}. Since $c$ is nonnegative, we clearly have
		\begin{equation}
			\label{eq:ineq_derivative_below1}
			\E_{\pi_0^\frac{\mu f - c(\xi, \cdot)}{\mu \epsilon + \tau} (\cdot | \xi)}   \left[\frac{\epsilon c (\xi,\zeta) + \tau f(\zeta)}{\mu \epsilon + \tau}\right] \geq \frac{- \tau \|\mathcal{F}\|_{\infty}}{\mu\epsilon + \tau} 
		\end{equation}
		As to the second term of \eqref{eq:derivative_h_mu}, we have by Jensen's inequality,
		\begin{equation}
			\label{eq:ineq_derivative_below2}
			\epsilon \log \E_{\zeta \sim \pi_0(\cdot|\xi)} \left[e^{\frac{\mu f(\zeta) - c(\xi,\zeta)}{\mu \epsilon + \tau}}\right] \geq -\frac{\epsilon \mu \maxF}{\mu\epsilon + \tau} - \frac{\epsilon m_c}{\mu\epsilon + \tau}
		\end{equation}
		Combining \eqref{eq:ineq_derivative_below1} and \eqref{eq:ineq_derivative_below2} gives $h'_f(\mu) \geq -\maxF - \frac{\lambdaup \epsilon m_c}{\epsilon + \lambdaup \tau}$. Finally, $h_f$ has Lipschitz constant $\maxF + \frac{\lambdaup m_c}{\epsilon + \lambdaup \tau}$

		Since $\phi^{\tau, \epsilon} (\mu^{-1}, \cdot, \xi)$ has Lipschitz constant $1$, then $\mu \leq \lambdalow^{-1}$, the function $\mu \phi^{\tau, \epsilon} (\mu^{-1}, \cdot, \xi)$ has Lipschitz constant $\lambdalow^{-1}$.

		Now, we can obtain a Lipschitz constant for
		\begin{equation*}
			h : (\mu, f) \mapsto (\mu \epsilon + \tau) \log \E_{\zeta \sim \pi_0(\cdot | \xi)} \left[e^{\frac{\mu f(\zeta) - c(\xi, \zeta)}{\mu \epsilon + \tau}}\right] = \psi^{\tau,\epsilon}(\mu,f,\xi).
		\end{equation*}
		Indeed, for $(\mu, \mu') \in [\lambdaup^{-1}, \lambdalow^{-1}] \times [\lambdaup^{-1}, \lambdalow^{-1}]$ and $(f,f') \in \mathcal{F} \times  \mathcal{F}$, we can write
		\begin{align*}
			|h(\mu, f) - h(\mu', f')| & \leq |h(\mu, f) - h(\mu', f)| + |h(\mu', f) - h(\mu', f')| \\
			& \leq \left(\|\mathcal{F}\|_{\infty} + \left(\frac{\lambdaup \epsilon}{\epsilon + \lambdaup \tau}\right) \right)  |\mu - \mu'| + \lambdalow^{-1} \|f - f'\|_{\infty}.
		\end{align*}
		hence $h$ has Lipschitz constant $\|\mathcal{F}\|_{\infty}  + \lambdalow^{-1} + \left(\frac{\lambdaup \epsilon}{\epsilon + \lambdaup \tau} \right) m_c$.
	\end{proof}

	\section{Dual bounds and maximal radius}
	\label{sec:dualbounds}
	
	We establish the existence of a dual lower bound on the true robust risk (\Cref{prop:continuity_radius_sketch}), for the standard WDRO problem in \ref{subsection:appendixmaximalradiusstandard} and for regularized WDRO in \ref{subsection:appendixmaximalradiusregularized}. The results involve the maximal radius introduced in \Cref{subsection:lowerbound}. For the regularized case, an estimate of the dual lower bound is provided.

	\subsection{Standard WDRO: continuity at zero of the maximal radius}
	\label{subsection:appendixmaximalradiusstandard}

	For $\lambda \geq 0$, we recall the expression of the maximal radius:
	\begin{equation*}
		\radiusmax(\lambda) = \inf_{f \in \mathcal{F}} \ExiP[-\partial_\lambda^+ \phi(\lambda, f, \xi)].
	\end{equation*}

	\begin{lemma} \label{prop:continuity_radius} $\radiusmax(0) = \radiuscrit$ and $\lim_{\lambda \to 0^+} \radiusmax(\lambda) = \radiuscrit$. In particular, there exists $\lambdalow > 0$ such that $\radiusmax(\lambda) \geq \frac{\radiuscrit}{4}$ for all $\lambda \in [0, 2 \lambdalow]$.
	\end{lemma}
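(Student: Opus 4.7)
The plan is to first use the envelope formula (Theorem~\ref{th:envelope_formula}) to express the right derivative $\partial^+_\lambda \phi(\lambda,f,\xi)$ in terms of the argmax set $S(\lambda,f,\xi) := \argmax_{\zeta \in \Xi}\{f(\zeta) - \lambda c(\xi,\zeta)\}$. Since $\phi(\lambda,f,\xi) = \sup_{\zeta \in \Xi}\{f(\zeta) - \lambda c(\xi,\zeta)\}$ is a supremum of functions that are affine (hence convex) in $\lambda$, the envelope formula gives $\partial_\lambda \phi(\lambda,f,\xi) = \operatorname{conv}\{-c(\xi,\zeta) : \zeta \in S(\lambda,f,\xi)\}$. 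Using that the right derivative of a univariate convex function is the maximum element of its subdifferential, I obtain $-\partial^+_\lambda \phi(\lambda,f,\xi) = \min\{c(\xi,\zeta) : \zeta \in S(\lambda,f,\xi)\}$. Plugging in $\lambda = 0$, we have $S(0,f,\xi) = \argmax_\Xi f$, which matches the integrand in the definition of $\radiuscrit$ and gives $\radiusmax(0) = \radiuscrit$ directly.

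For the limit, I split the argument into two inequalities. Convexity of $\phi$ in $\lambda$ makes $\lambda \mapsto \partial^+_\lambda \phi(\lambda,f,\xi)$ non-decreasing, hence $\lambda \mapsto -\partial^+_\lambda \phi(\lambda,f,\xi)$ is non-increasing; taking expectations and then the infimum over $\F$ preserves monotonicity, so $\radiusmax(\lambda) \leq \radiuscrit$ for all $\lambda \geq 0$, giving $\limsup_{\lambda \to 0^+} \radiusmax(\lambda) \leq \radiuscrit$. The matching lower bound is the nontrivial step and relies on the joint lower semicontinuity of $(\lambda,f,\xi) \mapsto -\partial^+_\lambda \phi(\lambda,f,\xi)$ on $\R_+ \times (\F,\|\cdot\|_\infty) \times \Xi$: the function $(\lambda,f,\xi,\zeta) \mapsto f(\zeta) - \lambda c(\xi,\zeta)$ is jointly continuous, so Lemma~\ref{lem:argmax_ush} yields outer semicontinuity of $S$ with compact nonempty values, and Lemma~\ref{lem:max_ush_is_usc} applied with $h = c$ then delivers lower semicontinuity of the min-value function.

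To upgrade pointwise lower semicontinuity to the integrated version, I exploit that $-\partial^+_\lambda \phi \geq 0$ (since $c \geq 0$) and invoke Fatou's lemma. Given any sequence $\lambda_n \to 0^+$, I select $f_n \in \F$ with $\E_\xi[-\partial^+_\lambda \phi(\lambda_n,f_n,\xi)] \leq \radiusmax(\lambda_n) + 1/n$; by compactness of $(\F,\|\cdot\|_\infty)$ I pass to a subsequence with $f_n \to f^* \in \F$, and Fatou combined with pointwise lower semicontinuity at $(0,f^*,\xi)$ yields $\liminf_n \radiusmax(\lambda_n) \geq \E_\xi[-\partial^+_\lambda \phi(0,f^*,\xi)] \geq \radiuscrit$. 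The final claim is then immediate: since $\radiusmax$ is non-increasing with $\radiusmax(\lambda) \to \radiuscrit > 0$, some $2\lambdalow > 0$ satisfies $\radiusmax(2\lambdalow) \geq \radiuscrit/4$, and monotonicity extends this to the whole interval $[0, 2\lambdalow]$. The main obstacle throughout is the nonsmoothness of $\phi$ at points where $S(\lambda,f,\xi)$ is multivalued: $\partial^+_\lambda \phi$ can jump in $\lambda$ and fails to be continuous in $(f,\xi)$, so naive dominated-convergence arguments break down and the semicontinuity tools of Lemmas~\ref{lem:max_ush_is_usc} and~\ref{lem:argmax_ush}, combined with compactness of $\F$, are what make the limit pass through uniformly over the loss class.
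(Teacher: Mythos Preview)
Your proof is correct and follows essentially the same route as the paper's: both compute $-\partial^+_\lambda\phi$ via the envelope formula, use convexity/monotonicity for the $\limsup$, and for the $\liminf$ combine compactness of $\F$ with outer semicontinuity of the argmax map (Lemma~\ref{lem:argmax_ush}), lower semicontinuity of the resulting min-value (Lemma~\ref{lem:max_ush_is_usc}), and Fatou. The only cosmetic difference is that the paper phrases the $\liminf$ step as a proof by contradiction while you give the direct sequential argument; the underlying ingredients are identical.
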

	
	\begin{proof} For $\xi \in \Xi$, $f - \lambda c(\xi, \cdot)$ is continuous, hence we can apply the envelope formula (\Cref{th:envelope_formula}) and the right-sided derivative of $\phi$ with respect to $\lambda$ is  $\partial^+_\lambda \phi(\lambda, f, \xi) = -\min \left\{c(\xi, \zeta) \ : \ \zeta \in \argmax_{\Xi} \accol{f - \lambda c(\xi, \cdot)} \right\}$. For convenience, we use the shorthand $$c^*(\xi, K) := \min \{ c(\xi, z), z \in K \}$$ 
		whenever $K \subset \Xi$ is compact. By integration and taking the infimum over $\F$ we obtain 
		\begin{equation}
			\label{eq:gamma_standard}
			\radiusmax(\lambda) = \underset{f \in \mathcal{F}}{\inf} \E_{\xi \sim P}[c^*(\xi, \argmax_{\Xi} \accol{f - \lambda c(\xi, \cdot)})].
		\end{equation}
		In particular, $\radiusmax(0) = \radiuscrit$.
		
		To prove the result, it is sufficient to show that $\liminf_{k \to \infty} \radiusmax(\lambda_k) \geq \radiuscrit$ for any positive sequence  $(\lambda_k)_{k \in \N}$ converging to $0$. Indeed, the functions $\ExiP[\phi(\cdot,f,\xi)]$ are convex hence their right-sided derivatives $\ExiP[-\partial^+_\lambda \phi(\cdot,f,\xi)]$ are nonincreasing, and $\radiusmax$ is nonincreasing since it is an infimum over nonincreasing functions. This means $\limsup_{k \to \infty} \radiusmax(\lambda_k) \leq \radiusmax(0)$ for any sequence $\lambda_k \to 0$. 
		
		Now assume toward a contradiction that there exists $\epsilon >0$ and a sequence $(\lambda_{k})_{k \in \N}$ from $\R_+$, such that $\lambda_k \to 0$  as $k \to \infty$, and $\radiusmax(\lambda_k) \leq \radiuscrit - \epsilon$ for all $k \in \N$. From the expression of $\radiusmax$ \eqref{eq:gamma_standard} this means that for each $k$, there exists $f_k$ such that 
		$\E_{\xi \sim P}[c^*(\xi, \argmax_\Xi f_k - \lambda_k c(\xi, \cdot))] \leq \radiuscrit - \frac{\epsilon}{2}$. By compactness of $\mathcal{F}$ with respect to $\|\cdot\|_{\infty}$, we may assume $(f_k)_{k \in \N}$ to converge to some $f^* \in \mathcal{F}$. In particular, for $\xi \in \Xi$, $f_k, - \lambda_k c(\xi, \cdot)$ converges to $f^*$ as $k \to \infty$.

		Let $\xi \in \Xi$ be arbitrary. $(\lambda, f) \mapsto \argmax_{\Xi} \accol{f - \lambda c(\xi, \cdot)}$ is outer semicontinuous with compact values (\Cref{lem:argmax_ush}) and $c$ is jointly continuous, hence $(\lambda, f) \mapsto c^*(\xi, \argmax_{\Xi} \accol{f - \lambda c(\xi, \cdot))}$ is lower semicontinuous, see \Cref{lem:max_ush_is_usc}. We then have $\liminf_{k \to \infty}  c^*(\xi, \argmax_{\Xi} \accol{f_k - \lambda_k c(\xi, \cdot)}) \geq c^*(\xi, \argmax_{\Xi} f^*)$. By integration with respect to $\xi \sim P$, we obtain
		
		\begin{align*}
			\E_{\xi \sim P}[c^*(\xi, \argmax_{\Xi} f^*)] & \leq \E_{\xi \sim P}[\liminf_{k \to \infty} c^*(\xi, \argmax_{\Xi} \accol{f_k - \lambda_k c(\xi, \cdot)})] \\ & \leq \liminf_{k \to \infty} \E_{\xi \sim P}[c^*(\xi, \argmax_{\Xi} \accol{f_k - \lambda_k c(\xi, \cdot)})] \\ & \leq \radiuscrit - \frac{\epsilon}{2}.
		\end{align*}
		
		Since, $\radiuscrit \leq \E_{\xi \sim P}[c^*(\xi, \argmax_{\Xi} f^*)]$, this yields a contradiction, and allows to conclude.
	\end{proof}

	\subsection{Regularized WDRO: Lipschitz maximal radius and upper bound}
	\label{subsection:appendixmaximalradiusregularized}
	
	\subsubsection{Lipschitz continuity of the maximal radius}
	
	For $\lambda \geq 0$, we consider the regularized maximal radius,
	\begin{equation*}
		\radiusmax^{\tau,\epsilon}(\lambda) = \inf_{f \in \mathcal{F}} \ExiP[-\partial_\lambda \phi^{\tau,\epsilon} (\lambda, f, \xi)].
	\end{equation*}

	The following result is the regularized version of \Cref{prop:continuity_radius}. Compared to the standard setting, the maximal radius is Lipschitz continuous, leading to an estimate of the dual lower bound.
	
	\begin{lemma} \label{prop:continuity_radius_regularized} $\radiusmax^{\tau,\epsilon}(0) = \radiuscrit^{\tau,\epsilon}$ and $\radiusmax^{\tau,\epsilon}$ is Lipschitz continuous on $\R_+$ with constant $$\frac{2}{\epsilon} \parx{\frac{\tau^2}{\epsilon^2} \maxF^2 + m_{2,c} e^{\frac{\maxF}{\epsilon} + \min \accol{\frac{m_c}{\tau}, \frac{2 \maxF m_c}{(\rho - m_c)\epsilon}}}}.$$ 
		In particular, setting
		
		\begin{equation}\label{eq:res}
			\lambdalow^{\tau,\epsilon} := \frac{3\epsilon \radiuscrit^{\tau,\epsilon}}{8\parx{\frac{\tau^2}{\epsilon^2} \maxF^2 + m_{2,c} e^{\frac{\maxF}{\epsilon} +\min \accol{\frac{m_c}{\tau}, \frac{2 \maxF m_c}{(\rho - m_c)\epsilon}}}}},
		\end{equation}
		
		then $\radiusmax^{\tau,\epsilon}(\lambda) \geq \frac{\radiuscrit^{\tau,\epsilon}}{4}$ for all $\lambda \in [0, 2 \lambdalow^{\tau,\epsilon}]$.
	\end{lemma}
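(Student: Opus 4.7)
The plan is to reduce Lipschitz continuity of $\radiusmax^{\tau,\epsilon}$ on $\R_+$ to a uniform upper bound on $\partial_\lambda^2\phi^{\tau,\epsilon}(\lambda,f,\xi)$ over $(\lambda,f,\xi)$, and to identify the endpoint value $\radiusmax^{\tau,\epsilon}(0)=\radiuscrit^{\tau,\epsilon}$ by a direct derivative computation at $\lambda=0$. The reduction is legitimate because $\radiusmax^{\tau,\epsilon}$ is an infimum over $f\in\F$ of $\ExiP[-\partial_\lambda\phi^{\tau,\epsilon}(\cdot,f,\xi)]$, so a Lipschitz constant that is uniform in $(f,\xi)$ passes through the expectation against $P$ and through the infimum over $\F$. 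Smoothness of $\phi^{\tau,\epsilon}(\cdot,f,\xi)$ as a log-moment generating function of an affinely perturbed distribution makes this rigorous via dominated convergence.

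For the key computation, set $A(\lambda):=\epsilon+\lambda\tau$, $g(\lambda,\zeta):=(f(\zeta)-\lambda c(\xi,\zeta))/A(\lambda)$, and $B(\lambda):=\Ezetaxi[e^{g}]$, so that $\phi^{\tau,\epsilon}=A\log B$. Differentiating and using the identities $\partial_\lambda g=-(c(\xi,\zeta)+g\tau)/A$ and $\partial_\lambda(\partial_\lambda g)=-2\tau\,\partial_\lambda g/A$, the cross terms cancel and one obtains the clean formula
\begin{equation*}
\partial_\lambda^2\phi^{\tau,\epsilon}(\lambda,f,\xi)\;=\;\frac{\Var_{\zeta\sim\pi_0^{g}(\cdot|\xi)}\left[\epsilon c(\xi,\zeta)+\tau f(\zeta)\right]}{(\epsilon+\lambda\tau)^3}.
\end{equation*}
Evaluating $-\partial_\lambda\phi^{\tau,\epsilon}$ at $\lambda=0$ gives $\E_{\zeta\sim\pi_0^{f/\epsilon}(\cdot|\xi)}[(\tau/\epsilon)f(\zeta)+c(\xi,\zeta)]-\tau\log\Ezetaxi[e^{f(\zeta)/\epsilon}]$, which after integration against $P$ and infimization over $\F$ is precisely $\radiuscrit^{\tau,\epsilon}$, yielding $\radiusmax^{\tau,\epsilon}(0)=\radiuscrit^{\tau,\epsilon}$.

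The analytical heart of the proof is then to bound the variance uniformly. Using $(a+b)^2\leq 2a^2+2b^2$ gives $\Var[\epsilon c+\tau f]\leq 2\epsilon^2\E_{\pi_0^g}[c^2]+2\tau^2\maxF^2$, and combining with $A\geq\epsilon$ reduces the task to controlling $\E_{\pi_0^g(\cdot|\xi)}[c(\xi,\zeta)^2]$. I would change measure back to $\pi_0(\cdot|\xi)$: the density $e^g/B$ is bounded above in the numerator by using $f\leq\maxF$ and $c\geq 0$, while $B$ is bounded below by $e^{-(\maxF+\lambda m_c)/A}$ via Jensen's inequality on the convex map $t\mapsto e^{-\lambda t/A}$. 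This yields $\E_{\pi_0^g}[c^2]\leq m_{2,c}\,e^{(2\maxF+\lambda m_c)/A}$, with $m_{2,c}:=\sup_\xi\Ezetaxi[c(\xi,\zeta)^2]$. The remaining exponent $\lambda m_c/A$ is then bounded in two regimes: by $m_c/\tau$ when $\tau>0$ (via $\lambda/A\leq 1/\tau$), and by $2\maxF m_c/((\rho-m_c)\epsilon)$ for $\lambda\leq\lambdaup$ (via \Cref{lem:upper_bound_sketch} and $A\geq\epsilon$); taking the minimum produces the stated exponent. Assembling everything delivers the uniform Lipschitz constant $L$.

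The conclusion then follows immediately: Lipschitzness together with $\radiusmax^{\tau,\epsilon}(0)=\radiuscrit^{\tau,\epsilon}$ gives $\radiusmax^{\tau,\epsilon}(\lambda)\geq\radiuscrit^{\tau,\epsilon}-L\lambda$, and the calibration of $\lambdalow^{\tau,\epsilon}$ in \eqref{eq:res} ensures $L\lambda\leq\tfrac{3}{4}\radiuscrit^{\tau,\epsilon}$ on the relevant interval, producing $\radiusmax^{\tau,\epsilon}(\lambda)\geq\radiuscrit^{\tau,\epsilon}/4$. The hardest step will be the variance-control portion: the change-of-measure bound on $\E_{\pi_0^g}[c^2]$ must be tight enough to match the claimed exponent, while cleanly reconciling the two regimes (one requiring $\tau>0$, the other requiring $\lambda\leq\lambdaup$) through the minimum in the final bound.
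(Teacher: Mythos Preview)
Your proposal is correct and follows essentially the same route as the paper: compute $\partial_\lambda^2\phi^{\tau,\epsilon}$ as the variance $\Var_{\pi_0^g(\cdot|\xi)}[\epsilon c+\tau f]/(\epsilon+\lambda\tau)^3$, bound the second moment via a change of measure back to $\pi_0(\cdot|\xi)$ together with Jensen on the normalizer, split the remaining exponent $\lambda m_c/(\epsilon+\lambda\tau)$ into the two regimes $m_c/\tau$ and $\lambdaup m_c/\epsilon$, pass the resulting uniform bound through the expectation and the infimum over $\F$, and finally calibrate $\lambdalow^{\tau,\epsilon}$ from the linear lower bound $\radiusmax^{\tau,\epsilon}(\lambda)\geq\radiuscrit^{\tau,\epsilon}-L\lambda$. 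Your cancellation argument for the second derivative (via $\partial_\lambda^2 g=-2\tau\partial_\lambda g/A$) is a clean shortcut to the variance formula that the paper reaches through a longer explicit computation, but the content is identical.
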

	
	\begin{proof}
		
		$\phi^{\tau,\epsilon}$ is differentiable with respect to $\lambda$ and we can verify that its derivative is given by
		
		\begin{equation*}
			\partial_{\lambda} \phi^{\tau,\epsilon} (\lambda, f, \xi) = - \E_{\zeta \sim \pi_0^{\frac{f - \lambda c(\xi, \cdot)}{\epsilon + \lambda \tau}}(\cdot | \xi)} \left[\frac{\tau f(\zeta) + \epsilon c(\xi, \zeta)}{\epsilon + \lambda \tau}\right]  + \tau \log \E_{\zeta \sim \pi_0(\cdot | \xi)}\left[e^{\frac{f(\zeta) - \lambda c(\xi, \zeta)}{\epsilon + \lambda \tau}}\right].
		\end{equation*}

		This gives $\radiusmax^{\tau,\epsilon}(0) = \radiuscrit^{\tau,\epsilon}$ For $f \in \F$ and $\xi \in \Xi$, our goal is now to compute the Lipschitz constant of $\lambda \mapsto \partial_\lambda \phi^{\tau,\epsilon}(\lambda, f, \xi)$. The Lipschitz constant of $\radiusmax^{\tau,\epsilon}$ will then be obtained by integration and taking the infimum over Lipschitz functions. We compute the appropriate quantities:

		1. We compute the derivative with respect to $\lambda$ of $u_1 : (\lambda, \zeta) \mapsto -\left(\frac{\tau f(\zeta) + \epsilon c(\xi,\zeta)}{\epsilon + \lambda \tau} \right) e^{\frac{f(\zeta) - \lambda c(\xi,\zeta)}{\epsilon + \lambda \tau}}$:
		
		\begin{equation*}
			\partial_\lambda u_1(\lambda, \zeta) =  \left(\frac{\tau^2 f(\zeta) + \epsilon \tau c(\xi,\zeta)}{(\epsilon + \lambda \tau)^2} + \frac{(\tau f(\zeta) + \epsilon c(\xi,\zeta))^2}{(\epsilon + \lambda \tau)^3}\right) e^{\frac{f(\zeta) - \lambda c(\xi,\zeta)}{\epsilon + \lambda \tau}}.
		\end{equation*}

		2. We compute the derivative with respect to $\lambda$ of $u_2 : (\lambda, \zeta) \mapsto e^{\frac{f(\zeta) - \lambda c(\xi,\zeta)}{\epsilon + \lambda \tau}}$:
		
		\begin{equation*}
			\partial_\lambda u_2(\lambda,\zeta) =  - \parx{\frac{\tau f(\zeta) + \epsilon c(\xi, \zeta)}{(\epsilon + \lambda \tau)^2}} e^{\frac{f(\zeta) - \lambda c(\xi,\zeta)}{\epsilon + \lambda \tau}}.
		\end{equation*}
		
		3. We compute the derivative of $U_3 : \lambda \mapsto \tau \log \Ezetaxi \left[e^{\frac{f(\zeta) - \lambda c(\xi,\zeta)}{\epsilon + \lambda \tau}}\right]$:
		\begin{equation*}
			U_3'(\lambda) = -\frac{\Ezetaxi \left[\left(\frac{\tau^2 f(\zeta) + \tau \epsilon c(\xi, \zeta)}{(\epsilon + \lambda \tau)^2} \right)e^{\frac{f(\zeta) - \lambda c(\xi,\zeta)}{\epsilon + \lambda \tau^2}}\right]}{\Ezetaxi \left[e^{\frac{f(\zeta) - \lambda c(\xi,\zeta)}{\epsilon + \lambda \tau}}\right]}.
		\end{equation*}

		Combining 1, 2 and 3, we are able to compute the derivative of $\partial_\lambda \phi^{\tau,\epsilon}$:
		\begin{align*}
			& \partial^2_\lambda \phi^{\tau,\epsilon}(\lambda,f,\xi)  = -\frac{\Ezetaxi[u_1(\lambda, \zeta)] \Ezetaxi[\partial_\lambda u_2(\lambda, \zeta)]}{\Ezetaxi[u_2(\lambda,\zeta)]^2} + U'_{3}(\lambda) \\ 
			& = \frac{\Ezetaxi \left[\frac{(\tau f(\zeta) + \epsilon c(\xi,\zeta))^2}{(\epsilon + \lambda \tau)^3} e^{\frac{f(\zeta) - \lambda c(\xi,\zeta)}{\epsilon + \lambda \tau}}\right]}{\Ezetaxi \left[e^{\frac{f(\zeta) - \lambda c(\xi,\zeta)}{\epsilon + \lambda \tau}}\right]}
			\\ 
			& \quad  - \frac{\Ezetaxi\bracket{\parx{\frac{\tau f(\zeta) + \epsilon c(\xi,\zeta)}{(\epsilon + \lambda \tau)^2}}e^{\fclambda}} \Ezetaxi \left[\parx{\ftauepsilon}e^{\fclambda}\right]}{\Ezetaxi \left[e^{\frac{f(\zeta) - \lambda c(\xi,\zeta)}{\epsilon + \lambda \tau}}\right]} \\
			& =  \frac{1}{\epsilon + \lambda \tau} \operatorname{Var}_{\zeta \sim \pi^{\frac{f - \lambda c(\xi,\cdot)}{\epsilon + \lambda \tau}}(\cdot|\xi)} \parx{\ftauepsilon},
		\end{align*}
		
		where $\operatorname{Var}_{\zeta \sim \pi^{\frac{f - \lambda c(\xi,\cdot)}{\epsilon + \lambda \tau}}(\cdot|\xi)}$ is the variance with respect to $\pi^{\frac{f - \lambda c(\xi,\cdot)}{\epsilon + \lambda \tau}}(\cdot|\xi)$.

		Note that all quantities can be differentiated under the (conditional) expectation since the derivatives with respect to $\lambda$ involve functions that are continuous on the compact sample space $\Xi$ (they are therefore bounded by a constant), see e.g. Theorem A.5.3 from \cite{durrett2010probability}. By the property of the variance, we obtain
		
		\begin{align}
			\label{eq:ineq_derivsec_phi}
			|\partial^2_\lambda \phi^{\tau,\epsilon}(\lambda,f,\xi)| & \leq \frac{1}{\epsilon + \lambda \tau} \Ezetaxif \bracket{\parx{\ftauepsilon}^2} \nonumber \\ & \leq \frac{2}{\epsilon^3} \Ezetaxif \bracket{\tau^2 \maxF^2 + \epsilon^2 c(\xi,\zeta)^2}.
		\end{align}
		Now we bound the right-hand side of the last inequality. First, we have
		\begin{equation}
			\label{eq:ineq_c_numerator}
			\Ezetaxi\bracket{c(\xi,\zeta)^2 e^{\frac{f(\zeta) - \lambda c(\xi,\zeta)}{\epsilon + \lambda \tau}}} \leq m_{2,c} e^{\frac{\maxF}{\epsilon}}
		\end{equation}
		
		On the other hand, by Jensen's inequality, we have
		\begin{equation}
			\label{eq:ineq_c_denominator}
			\Ezetaxi\bracket{e^{\fclambda}} \geq e^{-\frac{\lambda m_{c}}{\epsilon + \lambda \tau} - \frac{\maxF}{\epsilon}}
		\end{equation}
		We have the alternatives $\frac{\lambda m_{c}}{\epsilon + \lambda \tau} \leq \frac{\lambdaup m_c}{\epsilon} = \frac{2 \maxF m_c}{(\rho - m_c)\epsilon}$ in any case, and $\frac{\lambda m_{c}}{\epsilon + \lambda \tau} \leq \frac{m_c}{\tau}$ whenever $\tau > 0$. This means $\frac{\lambda m_{c}}{\epsilon + \lambda \tau} \leq \min \accol{\frac{m_c}{\tau}, \frac{2 \maxF m_c}{(\rho - m_c)\epsilon}}$.

		Dividing \eqref{eq:ineq_c_numerator} by \eqref{eq:ineq_c_denominator}, we obtain $\Ezetaxif \bracket{c(\xi,\zeta)^2} \leq m_{2,c} e^{\min \accol{\frac{m_c}{\tau}, \frac{2 \maxF m_c}{(\rho - m_c)\epsilon}}}e ^{\frac{2\maxF}{\epsilon}}$. Reinjecting this inequality in \eqref{eq:ineq_derivsec_phi} gives
		
		\begin{equation}
			\label{eq:lipschitz_gamma}
			|\partial^2_\lambda \phi^{\tau,\epsilon}(\lambda,f,\xi)| \leq \frac{2}{\epsilon} \parx{\frac{\tau^2}{\epsilon^2} \maxF^2 + m_{2,c} e^{\frac{2\maxF}{\epsilon} + \min \accol{\frac{m_c}{\tau}, \frac{2 \maxF m_c}{(\rho - m_c)\epsilon}}}} := L.
		\end{equation}
		
		This means that for $f \in \F$, the function $g : (\lambda,f) \mapsto \ExiP[-\partial_\lambda \phi^{\tau,\epsilon}(\lambda,f,\xi)]$ is $L$-Lipschitz where $L$ is given by \eqref{eq:lipschitz_gamma}.

		We then show that $\radiusmax^{\tau,\epsilon} := \inf_{f \in \F} g(\cdot, f)$ is $L$-Lipschitz continuous. Let  $(\lambda, \lambda') \in \R^2$, and let $(f_k)_{k \in \N}$ be a sequence from $\F$ such that $g(\lambda', f_k) \underset{k \to \infty}{\to} \radiusmax^{\tau,\epsilon}(\lambda')$. Then by definition of $\radiusmax^{\tau,\epsilon}$, we have for all $k \in \N$,
		\begin{equation*}
			\radiusmax^{\tau,\epsilon}(\lambda) - g(\lambda', f_k)  \leq  g(\lambda,f_k) - g(\lambda', f_k) \leq L |\lambda - \lambda'|.
		\end{equation*}
		Taking the limit as $k \to \infty$ gives $\radiusmax^{\tau,\epsilon}(\lambda) - \radiusmax^{\tau,\epsilon}(\lambda') \leq L |\lambda - \lambda'|$. Exchanging the roles of $\lambda$ and $\lambda'$ gives $|\radiusmax^{\tau,\epsilon}(\lambda) - \radiusmax^{\tau,\epsilon}(\lambda')| \leq L |\lambda - \lambda'|$, hence $\radiusmax^{\tau,\epsilon}$ is $L$-Lipschitz.

		Now, set $2\lambdalow^{\tau,\epsilon} := \sup \left\{ \lambda \in \R_+ \ : \  \radiusmax^{\tau, \epsilon}(\lambda) \geq \radiuscrit^{\tau,\epsilon}/4 \right\}$. Then either $\lambdalow^{\tau,\epsilon} = \infty$ (in which case any value $\lambdalow^{\tau,\epsilon}$ satisfies the desired property), or by continuity of $\radiusmax^{\tau,\epsilon}$, $\radiusmax^{\tau,\epsilon}(2\lambdalow^{\tau,\epsilon}) = \radiuscrit^{\tau,\epsilon}/4$ and we have $\radiuscrit^{\tau,\epsilon} - 2L \lambdalow^{\tau,\epsilon} \leq  \radiusmax(2\lambdalow^{\tau,\epsilon}) = \radiuscrit^{\tau,\epsilon}/4$. Finally, we obtain \eqref{eq:res}. 
	\end{proof}

	\subsubsection{Dual upper bound}
	The following result allows to bound the dual solution above. This step is specific to the regularized setting, see in particular \Cref{prop:nonlipschitz_psi} which illustrates this requirement.

	\begin{lemma}[Upper bound for the regularized problem \Cref{lem:upper_bound_sketch}] \label{prop:upper_bound} Assume $\rho > m_c$ and let $\lambdaup := \frac{2 \maxF}{\rho - m_c}$. For all $f \in \mathcal{F}$ and $Q \in \mathcal{P}(\Xi)$,
		\begin{equation*}
			\inf_{\lambda \in [0, \infty)} \accol{\lambda \rho + \E_{\xi \sim Q} [\phi^{\tau,\epsilon}(\lambda, f, \xi)]}  = \inf_{\lambda \in [0, \lambdaup)} \accol{\lambda \rho + \E_{\xi \sim Q} [\phi^{\tau,\epsilon}(\lambda, f, \xi)]}.
		\end{equation*}
	\end{lemma}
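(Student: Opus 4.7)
The plan is to show directly that every $\lambda \geq \lambdaup$ is ``dominated'' by $\lambda=0$, so the tail $[\lambdaup,\infty)$ contributes nothing to the infimum. Write $F(\lambda) := \lambda \rho + \E_{\xi \sim Q}[\phi^{\tau,\epsilon}(\lambda,f,\xi)]$. Since $\inf_{[0,\infty)} F = \min\{\inf_{[0,\lambdaup)} F,\, \inf_{[\lambdaup,\infty)} F\}$ and $0 \in [0,\lambdaup)$, it suffices to prove $\inf_{\lambda \geq \lambdaup} F(\lambda) \geq F(0)$.

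The key ingredients are the two-sided bounds on $\phi^{\tau,\epsilon}$ from \Cref{lem:constant_regularized}(1)(i), which hold pointwise in $\xi$ and therefore after integration against $Q$. First, the upper bound $\phi^{\tau,\epsilon}(0,f,\xi) \leq \maxF$ gives $F(0) \leq \maxF$. Second, the lower bound $\phi^{\tau,\epsilon}(\lambda,f,\xi) \geq -\maxF - \lambda m_c$ gives, for any $\lambda \geq 0$,
\begin{equation*}
F(\lambda) \;\geq\; \lambda \rho - \maxF - \lambda m_c \;=\; \lambda(\rho - m_c) - \maxF.
\end{equation*}
Since $\rho > m_c$, plugging in $\lambda = \lambdaup = \frac{2\maxF}{\rho - m_c}$ yields $F(\lambda) \geq 2\maxF - \maxF = \maxF$ for every $\lambda \geq \lambdaup$ (the right-hand side above is nondecreasing in $\lambda$). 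Combining, $\inf_{\lambda \geq \lambdaup} F(\lambda) \geq \maxF \geq F(0)$, as required.

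There is essentially no obstacle here: the whole argument rests on the elementary inequalities for $\phi^{\tau,\epsilon}$ already established in \Cref{lem:constant_regularized}, together with the choice of $\lambdaup$ engineered precisely to make the linear-in-$\lambda$ lower bound exceed $\maxF$. The one point worth being explicit about is that the inequality is strict enough to ensure the tail can be dropped: since $F(0)$ may be strictly less than $\maxF$, we cannot replace $[0,\lambdaup)$ by $(0,\lambdaup)$, but the statement only requires keeping $\lambda = 0$, which is in the retained set. The proof therefore closes in a handful of lines, without needing convexity of $\phi^{\tau,\epsilon}$ in $\lambda$ or any duality/compactness beyond what is already assembled.
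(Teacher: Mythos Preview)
Your proof is correct and takes a genuinely different, more elementary route than the paper's. The paper differentiates $\phi^{\tau,\epsilon}$ in $\lambda$, bounds $-\partial_\lambda \phi^{\tau,\epsilon}(\lambda,f,\xi) \leq \frac{2\maxF}{\lambda} + m_c$ through a somewhat involved chain of inequalities, and then invokes convexity of $F$ to conclude that the derivative of $F$ is nonnegative at $\lambdaup$, hence the infimum lies in $[0,\lambdaup)$. You bypass all of that by using the two-sided pointwise bounds on $\phi^{\tau,\epsilon}$ already recorded in \Cref{lem:constant_regularized}(1)(i): the upper bound caps $F(0)$ by $\maxF$, and the lower bound makes $F(\lambda) \geq \lambda(\rho - m_c) - \maxF$ exceed $\maxF$ once $\lambda \geq \lambdaup$. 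No derivative computation, no convexity needed. The paper's argument yields the marginally stronger statement that $F$ is nondecreasing on $[\lambdaup,\infty)$, but that extra information is not used anywhere downstream, so your shortcut loses nothing of substance.
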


	\begin{proof} Let $\xi \in \Xi$ be arbitrary. Recall that 
		\begin{equation*}
			\partial_{\lambda} \phi^{\tau,\epsilon} (\lambda, f, \xi) = - \E_{\zeta \sim \pi_0^{\frac{f - \lambda c(\xi, \cdot)}{\epsilon + \lambda \tau}}(\cdot | \xi)} \left[\frac{\tau f(\zeta) + \epsilon c(\xi, \zeta)}{\epsilon + \lambda \tau}\right]  + \tau \log \E_{\zeta \sim \pi_0(\cdot | \xi)}\left[e^{\frac{f(\zeta) - \lambda c(\xi, \zeta)}{\epsilon + \lambda \tau}}\right].
		\end{equation*}
		We bound $-\partial_{\lambda} \phi^{\tau,\epsilon} (\lambda, f, \xi)$ above, uniformly in $f \in \mathcal{F}$ and $\xi \in \Xi$. For readability of the proof, we set $\Tilde{\pi}_0 = \pi_0^{\frac{f - \lambda c(\xi, \cdot)}{\epsilon + \lambda \tau}}$ with a slight abuse of notation. In this case, we have
		\begin{align}   
			\label{eq:upper_bound_1}
			\E_{\zeta \sim \Tilde{\pi}_0(\cdot | \xi)} \left[\frac{\tau f(\zeta) + \epsilon c(\xi,\zeta)}{\epsilon + \lambda \tau} \right] & =     \E_{\zeta \sim \Tilde{\pi}_0(\cdot | \xi)} \left[\frac{\lambda \tau f(\zeta) + \lambda \epsilon c(\xi,\zeta) - \epsilon f(\zeta) + \epsilon f(\zeta)}{\lambda (\epsilon + \lambda \tau)} \right] \nonumber \\
			& = \frac{1}{\lambda} \E_{\zeta \sim \Tilde{\pi}_0(\cdot | \xi)} \left[f(\zeta)\right] - \frac{\epsilon}{\lambda}\E_{\zeta \sim \Tilde{\pi}_0(\cdot | \xi)} \left[\frac{f(\zeta) - \lambda c(\xi,\zeta)}{\epsilon + \lambda \tau}\right] \nonumber \\
			& \leq \frac{\maxF}{\lambda} - \frac{\epsilon}{\lambda} \log \Ezetaxi \left[e^{\frac{f(\zeta) - \lambda c(\xi,\zeta)}{\epsilon + \lambda \tau}}\right] \nonumber \\
			& \leq \frac{\maxF}{\lambda} - \frac{\epsilon}{\lambda(\epsilon + \lambda \tau)}  \left(\Ezetaxi [f(\zeta) - \lambda c(\xi,\zeta)] \right) \nonumber \\
			& \leq \frac{\maxF}{\lambda} +\frac{\epsilon \maxF}{\lambda (\epsilon + \lambda \tau)} + \frac{\epsilon m_c}{\epsilon + \lambda \tau},
		\end{align}
		where for the third line, we used \Cref{lem:ineq_logexp}, and for the fourth line, we used Jensen's inequality. On the other hand,
		\begin{align}
			\label{eq:upper_bound_2}
			- \tau \log \Ezetaxi \left[e^{\frac{f(\zeta) - \lambda c(\xi,\zeta)}{\epsilon + \lambda \tau}}\right] & \leq - \frac{\tau}{\epsilon + \lambda \tau} \Ezetaxi[f(\zeta) - \lambda c(\xi,\zeta)] \nonumber \\ & \leq \frac{\lambda \tau}{\lambda (\epsilon + \lambda \tau)} \maxF + \frac{\lambda \tau}{\epsilon + \lambda \tau} m_c
		\end{align}
		Summing \eqref{eq:upper_bound_1} and \eqref{eq:upper_bound_2} gives
		\begin{equation*}
			- \partial_{\lambda} \phi^{\tau,\epsilon} (\lambda, f, \xi) \leq \frac{2 \maxF}{\lambda} + m_c,
		\end{equation*}
		whence assuming $\rho > m_c$, and taking $\lambda= \lambdaup := \frac{2 \maxF}{\rho - m_c}$, we obtain for all $f \in \mathcal{F}$ and all $\xi \in \Xi$,
		\begin{equation*}
			0 \leq \rho + \partial_{\lambda} \phi^{\tau,\epsilon} (\lambdaup, f, \xi).
		\end{equation*}
		Integrating with respect to a distribution $Q \in \mathcal{P}(\Xi)$ yields
		\begin{equation*}
			0 \leq \rho + \E_{\xi \sim Q}[ \partial_{\lambda} \phi^{\tau,\epsilon} (\lambdaup, f, \xi)],
		\end{equation*}
		which is the derivative at $\lambdaup$ of the convex function $\lambda \mapsto \lambda \rho +  \E_{\xi \sim Q} \left[\phi^{\tau,\epsilon} (\lambda, f, \xi)\right]$. This means
		\begin{equation*}
			\inf_{\lambda \in [0, \infty)} \accol{\lambda \rho +  \E_{\xi \sim Q} \left[\phi^{\tau,\epsilon} (\lambda, f, \xi)\right]} = \inf_{\lambda \in [0, \lambdaup)} \accol{\lambda \rho +  \E_{\xi \sim \widehat{P}_n} \left[\phi^{\tau,\epsilon} (\lambda, f, \xi)\right]}.
		\end{equation*}
	\end{proof}

	\section{Proof of the main results}
	\label{proofmainresults}
	
	In this section, we prove the main results of the paper. First, we establish the core concentration results in \ref{appendix:dualboundempirical} that apply to standard and regularized WDRO. In particular, 
	we establish the dual lower bound with high probability on the empirical robust risk. We deduce Theorems~\ref{th:generalization_standard} and \ref{th:generalization_regularized} in \ref{subsection:appendix_main_results} by computing the generalization constants. In \ref{subsection:excess_appendix} we obtain the excess bounds (\Cref{th:excess_risk_standard} and \Cref{th:excess_risk_regularization}). Finally, the results on linear models (\Cref{prop:linearmodels}) are found in \ref{subsection:linearmodels_appendix}.

	\subsection{Dual bounds with high probability on the empirical problem}
	\label{appendix:dualboundempirical}

	\fbox{\begin{minipage}{0.98\textwidth}
			All the results of this subsection hold for both standard and regularized cases. The proofs hold \emph{as is}, replacing $\phi$,  $\psi$, $\radiuscrit$, $\radiusmax$ and $\lambdalow$ by $\phi^{\tau,\epsilon}$, $\psi^{\tau,\epsilon}$, $\radiuscrit^{\tau,\epsilon}$, $\radiusmax^{\tau,\epsilon}$ and $\lambdalow^{\tau,\epsilon}$ respectively.
	\end{minipage}}

	\bigskip
	
	For $\lambda \geq 0$, we recall the expression of the maximal radius:
	\begin{equation*}
		\radiusmax(\lambda) = \inf_{f \in \mathcal{F}} \ExiP[-\partial_\lambda^+ \phi(\lambda, f, \xi)].
	\end{equation*}
	\paragraph{Problem's constants.} Before proving the next results, we introduce several quantities:
	
	\begin{proposition}[Dual lower bound in the true problem] \label{prop:lambda_star}  Under \Cref{ass:general_assumptions}, there exists $\lambdalow > 0$ such that for all $\lambda \in [0, 2 \lambdalow]$, $\radiusmax(\lambda) \geq \frac{\radiuscrit}{4}$. In particular, $\ExiP[\partial^+_\lambda \phi(\lambda, f, \xi)] \leq - \frac{\radiuscrit}{4}$ for all $f \in \mathcal{F}$.
	\end{proposition}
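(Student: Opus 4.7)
The statement is essentially a restatement of what is already contained in \Cref{prop:continuity_radius} (for the standard case) and \Cref{prop:continuity_radius_regularized} (for the regularized case), both of which have been proved earlier in this appendix. So the plan is to reduce this proposition to those lemmas and then extract the pointwise consequence on the right-sided derivatives.

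More concretely, the first part of the claim---namely, the existence of $\lambdalow > 0$ such that $\radiusmax(\lambda) \geq \radiuscrit/4$ on $[0, 2\lambdalow]$---is literally the second assertion of \Cref{prop:continuity_radius}. In the standard case, this relies on the fact that $\radiusmax$ is non-increasing (since it is an infimum of derivatives of convex functions evaluated from the right) and semicontinuous at $0$ with value $\radiuscrit$; in the regularized case, it follows from the Lipschitz continuity of $\radiusmax^{\tau,\epsilon}$ shown in \Cref{prop:continuity_radius_regularized}, which even yields the explicit estimate \eqref{eq:res}. In either case, no further work is required beyond invoking the already-established lemma.

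For the ``in particular'' portion, I would argue directly from the definition
\begin{equation*}
    \radiusmax(\lambda) = \inf_{f \in \F} \E_{\xi \sim P}[-\partial_\lambda^+ \phi(\lambda, f, \xi)].
\end{equation*}
Since the infimum is bounded below by $\radiuscrit/4$ on $[0, 2\lambdalow]$, each term in the infimum must satisfy the same bound: for every $f \in \F$ and every $\lambda \in [0, 2\lambdalow]$,
\begin{equation*}
    \E_{\xi \sim P}[-\partial_\lambda^+ \phi(\lambda, f, \xi)] \geq \frac{\radiuscrit}{4},
\end{equation*}
which rearranges to the claimed inequality $\E_{\xi \sim P}[\partial^+_\lambda \phi(\lambda, f, \xi)] \leq -\radiuscrit/4$.

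Since the substantive analytical content---the semicontinuity/Lipschitz arguments for $\radiusmax$ that allow the continuity at $0$ to be transferred into a positive interval---has already been handled in the earlier lemmas, there is no genuine obstacle here. The only thing to keep in mind is the sign convention in the definition of $\radiusmax$ (negative right-derivative) so that a lower bound on $\radiusmax$ corresponds to an upper bound on $\E_P[\partial_\lambda^+ \phi]$, and to note that the same proof applies verbatim to the regularized case by substituting the regularized symbols as instructed in the boxed remark at the top of the subsection.
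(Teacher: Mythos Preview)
Your proposal is correct and essentially identical to the paper's own proof, which simply cites \Cref{prop:continuity_radius} (resp.\ \Cref{prop:continuity_radius_regularized}) for the existence of $\lambdalow$ via $\lim_{\lambda\to 0^+}\radiusmax(\lambda)=\radiuscrit$. Your explicit unpacking of the ``in particular'' claim from the definition of $\radiusmax$ as an infimum is a welcome clarification that the paper leaves implicit.
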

	
	\begin{proof}
		This comes from $\lim_{\lambda \to 0^+} \radiusmax(\lambda) = \radiuscrit$. See \cref{prop:continuity_radius} for standard WDRO and \cref{prop:continuity_radius_regularized} for the regularized case.
	\end{proof}

	\begin{remark}[Refining the degeneracy threshold] The constant $\lambdalow$ may be refined to fix another threshold than $\frac{\radiuscrit}{4}$. More precisely, for any $\eta \in (0,1)$, we may also find $\lambdalow > 0$ such that for all $\lambda \in [0, 2 \lambdalow]$, $\radiusmax(\lambda) \geq \eta \radiuscrit$. We choose $\eta = \frac{1}{4}$ in \Cref{prop:lambda_star} to be consistent with the study of linear models from \Cref{subsection:linearmodels}.
		
	\end{remark}

	\begin{wrapfigure}[10]{r}{0.42\textwidth}
		\hspace{-0.2cm}\includegraphics[scale=1]{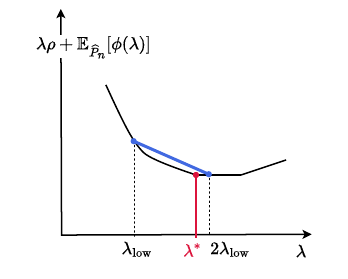}
		\caption{\footnotesize{Bounding from below the empirical dual solution $\lambda^*$ expresses as a slope condition (thanks to~convexity of the objective).}}
		\label{fig:diagram_convex}
	\end{wrapfigure}
	For the next results, we define the following quantities:
	
	\bigskip

	\textbullet \: $\Phi$ is the length of a segment $I$ such that $\phi(\lambda, f, \xi) \in I$ for all $\lambda \in \{\lambdalow, 2\lambdalow\}$, $f \in \mathcal{F}$ and $\xi \in \Xi$. 
	
	\textbullet \: $\Psi$ is the length of a segment $J$ such that $\psi(\mu, f, \xi) \in J$ for all $\mu \in (0, \lambdalow^{-1}]$, $f \in \mathcal{F}$ and $\xi \in \Xi$. 
	
	\textbullet \: $L_\psi$ and $\lambdaup \in  [0, \infty]$ are such that $\psi(\cdot, \cdot, \xi)$ is $L_\psi$-Lipschitz on $[\lambdaup^{-1}, \lambdalow^{-1}] \times \mathcal{F}$ for all $\xi \in \Xi$.

	\bigskip
	
	Let $\lambdalow > 0$ be the dual lower bound given by \Cref{prop:lambda_star}. We now to show this quantity is a lower bound on the empirical robust risk:
	
	\vspace{2cm}

	\begin{proposition}[Dual lower bound with high probability] \label{prop:bound_dual_variable} Under \Cref{ass:general_assumptions}, let $\lambdalow$ be given by \Cref{prop:lambda_star}, and $\lambdaup \in [\lambdalow, \infty]$. If $\rho \leq \frac{\radiuscrit}{4} - \frac{C(\delta)}{\sqrt{n}}$
		where $C(\delta) :=  \frac{96\mathcal{I}(\mathcal{F}, \|\cdot\|_\infty)}{\lambdalow} +  \frac{2\Phi}{\lambdalow}\sqrt{2\log \frac{4}{\delta}}$, then with probability $1 - \frac{\delta}{2}$, for all $f \in \mathcal{F}$,
		\begin{equation*}
			\inf_{\lambda \in [0, \lambdaup)}  \left\{ \lambda \rho + \E_{\xi \sim \widehat{P}_n}[\phi(, \lambda, f,\xi)] \right\} = \inf_{\lambda \in [\lambdalow, \lambdaup)}  \left\{ \lambda \rho + \E_{\xi \sim \widehat{P}_n}[\phi(\lambda, f,\xi)] \right\}.
		\end{equation*}
	\end{proposition}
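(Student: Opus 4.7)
The strategy is to exploit convexity of the dual objective in $\lambda$ to reduce the statement to a uniform slope condition, and then to verify this condition with high probability using the uniform concentration theorem (\Cref{lem:concentration}). For each $f \in \mathcal{F}$, set $g_f(\lambda) := \lambda \rho + \E_{\xi \sim \widehat{P}_n}[\phi(\lambda, f, \xi)]$; this function is convex on $[0,\infty)$ because $\phi(\cdot,f,\xi)$ is convex. By monotonicity of secant slopes of convex functions, $g_f(2\lambdalow) \leq g_f(\lambdalow)$ implies the left derivative $g_f'^{-}(\lambdalow) \leq 0$, hence $g_f$ is non-increasing on $[0, \lambdalow]$, and therefore $\inf_{\lambda \in [0, \lambdaup)} g_f = \inf_{\lambda \in [\lambdalow, \lambdaup)} g_f$. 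Equivalently, it is enough to show that, with probability at least $1-\delta/2$ and uniformly in $f$,
\begin{equation*}
\rho \;\leq\; -\frac{1}{\lambdalow}\,\E_{\xi \sim \widehat{P}_n}\!\left[\phi(2\lambdalow, f, \xi) - \phi(\lambdalow, f, \xi)\right].
\end{equation*}

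On the population side, \Cref{prop:lambda_star} yields $\ExiP[\partial^+_\lambda \phi(\mu, f, \xi)] \leq -\radiuscrit/4$ for every $\mu \in [0, 2\lambdalow]$ and $f \in \mathcal{F}$. Since the secant slope of a convex function is bounded above by the right derivative at the right endpoint (or, equivalently, by integrating the right derivative from $\lambdalow$ to $2\lambdalow$), we obtain the population slope bound
\begin{equation*}
\frac{1}{\lambdalow}\,\ExiP\!\left[\phi(2\lambdalow, f, \xi) - \phi(\lambdalow, f, \xi)\right] \;\leq\; -\frac{\radiuscrit}{4}.
\end{equation*}

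On the statistical side, for each fixed $\lambda \in \{\lambdalow, 2\lambdalow\}$, \Cref{lem:constant_standard} guarantees that $(f, \xi) \mapsto \phi(\lambda, f, \xi)$ is $1$-Lipschitz in $f$ with respect to $\|\cdot\|_\infty$ and takes values in a segment of length $\Phi$. Applying \Cref{lem:concentration} separately at $\lambda = \lambdalow$ and $\lambda = 2\lambdalow$, each at confidence $1 - \delta/4$, and taking a union bound, yields with probability at least $1 - \delta/2$ and simultaneously in $f \in \mathcal{F}$,
\begin{equation*}
\frac{1}{\lambdalow}\Big(\E_{\xi \sim \widehat{P}_n} - \ExiP\Big)\!\left[\phi(2\lambdalow, f, \xi) - \phi(\lambdalow, f, \xi)\right] \;\leq\; \frac{1}{\lambdalow}\!\left(\frac{2 \cdot 48\,\mathcal{I}_\F}{\sqrt{n}} + 2\Phi\sqrt{\frac{2 \log(4/\delta)}{n}}\right) \;=\; \frac{C(\delta)}{\sqrt{n}}.
\end{equation*}

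Adding the last two displayed inequalities and multiplying by $-1$ gives $-\lambdalow^{-1}\E_{\widehat{P}_n}[\phi(2\lambdalow, f, \xi) - \phi(\lambdalow, f, \xi)] \geq \radiuscrit/4 - C(\delta)/\sqrt{n}$, which is $\geq \rho$ by hypothesis. The slope condition therefore holds uniformly on $\mathcal{F}$, and the claim follows from the first paragraph. The only technical care lies in matching the constants of $C(\delta)$: the factor $96 = 2 \cdot 48$ comes from summing the two Lipschitz contributions, and the $\log(4/\delta)$ from the union bound at level $\delta/4$ on each of the two endpoints. Everything else is routine convex analysis, and crucially uses only $\phi$-values rather than its subgradients, which is precisely what makes the argument robust to the nonsmoothness of $\phi$ in $\lambda$ and immediately portable to the regularized dual generator $\phi^{\tau,\epsilon}$ via the substitution table at the top of the section.
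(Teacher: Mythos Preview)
Your proof is correct and follows essentially the same approach as the paper: reduce to a secant-slope condition via convexity of the dual objective, bound the population slope by $-\radiuscrit/4$ using \Cref{prop:lambda_star}, and control the empirical-to-population gap by applying \Cref{lem:concentration} at the two endpoints $\lambdalow,2\lambdalow$ with a union bound at level $\delta/4$ each. The constants match exactly, and your closing remark about portability to $\phi^{\tau,\epsilon}$ mirrors the paper's framing of this subsection.
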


	\begin{proof}
		The proof consists in using the convexity of $\phi(\cdot, f, \xi)$. Indeed, given a convex function $g$ over $\R^+$, the infimum of $g$ has to occur on an interval $[\lambdalow, +\infty]$ if $g$ has a negative slope between $\lambdalow$ and $2 \lambdalow$ (\Cref{fig:diagram_convex}):
		\begin{equation*}
			\frac{g(2\lambdalow) - g(\lambdalow)}\lambdalow \leq 0  \implies  \inf_{\lambda \geq \lambdalow} g(\lambda) = \inf_{\lambda \geq 0} g(\lambda).
		\end{equation*}
		We want this condition satisfied for the empirical Lagrangian function $g(\lambda) = \lambda \rho + \E_{\widehat{P}_n}[\phi(\lambda,f)]$  with high probability. For convenience, this can be expressed with the slope of $\E_{\widehat{P}_n}[\phi(\cdot, f)]$:
		\begin{equation}
			\label{eq:slope_condition_empirical}
			\widehat{s}(f)\!:=\!\frac{\E_{\widehat{P}_n}[\phi(2\lambdalow,f)] - \E_{\widehat{P}_n}[\phi(\lambdalow,f)]}\lambdalow \leq -\rho.
		\end{equation}
		This is the condition we aim to obtain. To this end, we proceed by comparing the empirical slope to the true one, that is $s(f) := \frac{\E_{P}[\phi(2\lambdalow,f)] - \E_{P}[\phi(\lambdalow,f)]}\lambdalow$. We can show that any function $(f,\xi) \mapsto \phi(\lambda, f, \xi)$, with $\lambda \in \R_+$, satisfies the requirements for the concentration theorem \Cref{lem:concentration}, which is done in \Cref{lem:constant_standard} and \Cref{lem:constant_regularized}. Consequently, we can apply the concentration theorem twice, on each function $\phi(2 \lambdalow, \cdot, \cdot)$ and $\phi(2 \lambdalow, \cdot, \cdot)$, to obtain that $\widehat{s}(f)$ approximates $s(f)$ with probability at least $1 - \frac{\delta}{2}$,
		\begin{equation*}
			\forall f \in \F, \quad \widehat{s}(f) \leq s(f) + \frac{\beta}{\sqrt{n}},
		\end{equation*}
		where $C(\delta) > 0$ will be computed afterwards. On the other hand, $s(f) \leq \E_P[\partial^+_\lambda \phi(2 \lambdalow,f)]$ by con\-ve\-xity of $\phi$, hence  $s(f) \leq -\frac{\radiuscrit}{4}$ by \Cref{prop:lambda_star}. This means $\widehat{s}(f) \leq \frac{\beta}{\sqrt{n}} - \frac{\radiuscrit}{4}$ hence we have the desired condition \eqref{eq:slope_condition_empirical} when $\rho \leq \frac{\radiuscrit}{4}-\frac{C(\delta)}{\sqrt{n}},$ and 
		with probability at least $1 - \frac{\delta}{2}$, for all $f \in \mathcal{F}$,
		\begin{equation*}
			\inf_{\lambda \in [0, \lambdaup) }  \left\{ \lambda \rho + \E_{\xi \sim \widehat{P}_n}[\phi(\lambda, f,\xi)] \right\} = \inf_{\lambda \in [\lambdalow, \lambdaup)}  \left\{ \lambda \rho + \E_{\xi \sim \widehat{P}_n}[\phi(\lambda, f,\xi)] \right\}.
		\end{equation*}

		\bigskip

		\emph{Concentration constant $C(\delta)$}. We now compute $C(\delta)$. Let $\lambda \in \{\lambdalow, 2 \lambdalow\}$. 
		By \Cref{lem:concentration}, we have with probability at least $1 - \frac{\delta}{4}$, for all $f \in \mathcal{F}$,
		\begin{equation}
			\label{eq:concentration_psi1}
			\E_{\xi \sim \widehat{P}_n}[\phi(2\lambdalow,f,\xi)] - \E_{\xi \sim P}[\phi(2\lambdalow,f,\xi)] \leq \frac{48 \mathcal{I}(\mathcal{F}, \|\cdot\|_\infty)}{\sqrt{n}} + \Phi \sqrt{\frac{2\log \frac{4}{\delta}}{n}}
		\end{equation}
		and with probability at least $1 - \frac{\delta}{4}$, for all $f \in \mathcal{F}$,
		\begin{equation}
			\label{eq:concentration_psi2}
			\E_{\xi \sim P}[\phi(\lambdalow,f,\xi)] - \E_{\xi \sim \widehat{P}_n}[\phi(\lambdalow,f,\xi)] \leq \frac{48 \mathcal{I}(\mathcal{F}, \|\cdot\|_{\infty})}{\sqrt{n}} + \Phi \sqrt{\frac{2\log \frac{4}{\delta}}{n}}.
		\end{equation}
		We set $C'(\delta) := 48 \mathcal{I}(\mathcal{F}, \|\cdot\|_{\infty}) +  \Phi \sqrt{2 \log \frac{4}{\delta}}$. Intersecting the events \eqref{eq:concentration_psi1} and \eqref{eq:concentration_psi2}, we obtain with probability $1 - \frac{\delta}{2}$, for all $f \in \F$,
		\begin{align}
			\label{eq:slopedualfunction}
			& \frac{\E_{\xi \sim \widehat{P}_n}[\phi(2 \lambdalow, f, \xi)] - \E_{\xi \sim \widehat{P}_n}[\phi(\lambdalow, f, \xi)]}\lambdalow  \nonumber\\ &  \leq \frac{1}\lambdalow \left( \ExiP[\phi(2 \lambdalow, f, \xi)] - \ExiP[\phi(\lambdalow, f, \xi)] + \frac{2 C'(\delta)}{\sqrt{n}}\right) \nonumber \\
			& \leq \ExiP[\partial_\lambda^+ \phi(2 \lambdalow, f, \xi)] + \frac{2 C'(\delta)}{\lambdalow\sqrt{n}} \nonumber \\
			& \leq - \frac{\radiuscrit}{4} + \frac{2 C'(\delta)}{\lambdalow\sqrt{n}},
		\end{align}
		where we recall that for $\lambdalow > 0$, satisfies for all $\lambda \in [0,2\lambdalow]$ and all $f \in \mathcal{F}$, $\ExiP[\partial^+ \phi(\lambda,f,\xi)] \leq - \frac{\radiuscrit}{4}$. This means $C(\delta) = 2C'(\delta)/\lambdalow$ and we have the desired expression.

		
	\end{proof}

	This implies a generalization bound on the dual problem of (regularized) WDRO:
	
	\begin{proposition}[Generalization bound on the dual problem] \label{th:general_bound} Under \Cref{ass:general_assumptions}, let $\lambdalow > 0$ be given by \Cref{prop:lambda_star}. If $\frac{B(\delta)}{\sqrt{n}} \leq \rho \leq \frac{\radiuscrit}{4} - \frac{C(\delta)}{\sqrt{n}}$ where 
		
		\begin{itemize}
			\item $B(\delta) = 48 L_{\psi} \left(\mathcal{I}(\mathcal{F}, \|\cdot\|_\infty) + \frac{2}{\lambdalow}\right) + \Psi \sqrt{2\log \frac{2}{\delta}}$,
			\item $C(\delta) = \frac{96\mathcal{I}(\mathcal{F}, \|\cdot\|_\infty)}{\lambdalow} +  \frac{2 \Phi}{\lambdalow} \sqrt{2\log \frac{4}{\delta}}$,
		\end{itemize}

		then with probability at least $1 - \delta$, for all $f \in \mathcal{F}$,
		\begin{equation*}
			\inf_{\lambda \in [0,\lambdaup)} \left\{ \lambda \rho + \E_{\xi \sim \widehat{P}_n}[\phi(\lambda, f, \xi)] \right\} \geq \inf_{\lambda \in [0,\infty)} \left\{ \lambda \parx{\rho - \frac{B(\delta)}{\sqrt{n}}} + \E_{\xi \sim P}[\phi(\lambda, f,\xi)] \right\}.
		\end{equation*}
	\end{proposition}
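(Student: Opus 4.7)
The plan is to combine two high-probability events via a union bound: first, the dual lower bound on the empirical problem from \Cref{prop:bound_dual_variable}, which restricts the empirical dual infimum to the range $\lambda \geq \lambdalow$; and second, a uniform concentration estimate over this restricted range obtained by reparametrizing $\lambda$ to a compact interval. The first event holds on the set $E_1$ with probability at least $1-\delta/2$, provided $\rho \leq \radiuscrit/4 - C(\delta)/\sqrt{n}$, and yields
\[
\inf_{\lambda \in [0, \lambdaup)} \accol{\lambda \rho + \E_{\xi \sim \widehat{P}_n}[\phi(\lambda, f, \xi)]} = \inf_{\lambda \in [\lambdalow, \lambdaup)} \accol{\lambda \rho + \E_{\xi \sim \widehat{P}_n}[\phi(\lambda, f, \xi)]}
\]
for every $f \in \F$. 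This is the role of the upper bound condition on $\rho$.

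The main obstacle is that $\phi(\lambda, f, \xi)$ is not uniformly Lipschitz in $\lambda$ on $[\lambdalow, \infty)$, so \Cref{lem:concentration} cannot be applied directly to $\phi$ over this unbounded range. Following the rationale behind $\alpha_n$ in \Cref{subsec:main_approach}, I would instead work with the rescaled generator $\psi(\mu, f, \xi) := \mu \phi(\mu^{-1}, f, \xi)$, with $\mu = 1/\lambda$ restricted to the compact interval $[\lambdaup^{-1}, \lambdalow^{-1}]$ (where $\lambdaup^{-1} = 0$ is permitted in the standard case). By \Cref{lem:constant_standard} (resp.\ \Cref{lem:constant_regularized}), $\psi$ is jointly $L_\psi$-Lipschitz and takes values in a segment of length $\Psi$. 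Applying \Cref{lem:concentration} to $(\mu, f) \mapsto \psi(\mu, f, \cdot)$ on the product metric space $[\lambdaup^{-1}, \lambdalow^{-1}] \times (\F, \|\cdot\|_\infty)$, and bounding its Dudley integral via \Cref{lem:dudley} by $\mathcal{I}(\F, \|\cdot\|_\infty) + 3/(2\lambdalow) \leq \mathcal{I}(\F, \|\cdot\|_\infty) + 2/\lambdalow$, produces an event $E_2$ of probability at least $1-\delta/2$ on which
\[
\sup_{(\mu, f) \in [\lambdaup^{-1}, \lambdalow^{-1}] \times \F} \accol{\E_{\xi \sim P}[\psi(\mu, f, \xi)] - \E_{\xi \sim \widehat{P}_n}[\psi(\mu, f, \xi)]} \leq \frac{B(\delta)}{\sqrt{n}},
\]
with $B(\delta)$ as in the statement.

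The third step uses the identity $\lambda \, \psi(\lambda^{-1}, f, \xi) = \phi(\lambda, f, \xi)$: on $E_2$, multiplying the concentration bound by $\lambda \in [\lambdalow, \lambdaup)$ gives
\[
\E_{\xi \sim \widehat{P}_n}[\phi(\lambda, f, \xi)] \geq \E_{\xi \sim P}[\phi(\lambda, f, \xi)] - \frac{\lambda B(\delta)}{\sqrt{n}}
\]
for every $f \in \F$. Adding $\lambda \rho$ on both sides absorbs the error into a shift of the radius from $\rho$ to $\rho - B(\delta)/\sqrt{n}$. Taking the infimum over $\lambda \in [\lambdalow, \lambdaup)$ on the left and enlarging the feasible set to $[0, \infty)$ on the right yields the stated inequality. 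Intersecting $E_1$ and $E_2$ via a union bound delivers the claim with probability at least $1 - \delta$. The assumption $\rho \geq B(\delta)/\sqrt{n}$ is used only to ensure the right-hand side is a nontrivial quantity (otherwise the infimum degenerates to $-\infty$). The subtle point of the argument is the passage from $\phi$ to $\psi$: it trades an unbounded $\lambda$-range for a compact $\mu$-range while preserving the final bound, because the multiplicative factor $\lambda$ in the reparametrization combines cleanly with $\rho$.
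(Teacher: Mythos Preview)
Your proposal is correct and follows essentially the same approach as the paper's own proof: invoke \Cref{prop:bound_dual_variable} to restrict the empirical infimum to $[\lambdalow,\lambdaup)$, apply \Cref{lem:concentration} to the rescaled generator $\psi(\mu,f,\xi)=\mu\phi(\mu^{-1},f,\xi)$ on the compact range $\mu\in[\lambdaup^{-1},\lambdalow^{-1}]$ with the Dudley-integral bound from \Cref{lem:dudley}, and combine the two events by a union bound. The only point you do not mention is the degenerate case $\lambdaup\leq\lambdalow$, which the paper handles separately (the empirical dual function is decreasing on $[0,\lambdaup)$ by the slope condition, so the infimum is attained at the right endpoint and the same concentration argument applies at a single $\mu$); this is a minor edge case and does not affect the substance of your argument.
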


	\begin{proof}
		We assume $\lambdaup > \lambdalow$. By \Cref{lem:concentration}, applied to $(\mu,f) \mapsto \mu \phi(\mu^{-1},f,\xi)$, we obtain with probability at least $1 - \frac{\delta}{2}$,
		\begin{equation}
			\label{eq:concentration_muphi}
			\alpha_n := \sup_{(\mu, f) \in (\lambdaup^{-1}, \lambdalow^{-1}] \times \mathcal{F}} \{ \E_{\xi \sim P}[\psi(\mu, f,\xi)] - \E_{\xi \sim \widehat{P}_n}[\psi(\mu, f, \xi)] \} \leq \frac{B(\delta)}{\sqrt{n}}
		\end{equation}
		where $B(\delta) = 48 L_{\psi} \mathcal{I}([0, \lambdalow^{-1}] \times\mathcal{F}, \dist) + \Psi \sqrt{2\log \frac{2}{\delta}}$ and $\dist((\mu, f),(\mu', f')) := |\mu - \mu'| + \|f - f'\|_\infty$. Furthermore, we have the inequality
		\begin{equation*}
			\mathcal{I}([0, \lambdalow^{-1}] \times \mathcal{F}) \leq \mathcal{I}(\mathcal{F}, \|\cdot\|_\infty) + \frac{1}{2\lambdalow}(1 + 2 \log 2) \leq \mathcal{I}(\mathcal{F}, \| \cdot \|_\infty) + \frac{2}{\lambdalow},
		\end{equation*}
		see \Cref{lem:dudley}, hence we may refine $B(\delta)$ as $B(\delta) = 48 L_\psi \left(\mathcal{I}(\mathcal{F}, \| \cdot \|_\infty) + \frac{2}{\lambdalow}\right) + \Psi \sqrt{2 \log \frac{2}{\delta}}.$

		By \Cref{prop:bound_dual_variable}, if $\rho \leq \frac{\radiuscrit}{4} - \frac{C(\delta)}{\sqrt{n}}$, then with probability at least $1 - \frac{\delta}{2}$, for all $f \in \mathcal{F}$,
		\begin{equation}
			\label{eq:empirical_robust_risk_dual_bound}
			\inf_{\lambda \in [0, \lambdaup) }  \left\{ \lambda \rho + \E_{\xi \sim \widehat{P}_n}[\phi(\lambda, f, \xi)] \right\} = \inf_{\lambda \in [\lambdalow, \lambdaup)}  \left\{ \lambda \rho + \E_{\xi \sim \widehat{P}_n}[\phi(\lambda, f, \xi)] \right\}.
		\end{equation}
		Finally, combining \eqref{eq:empirical_robust_risk_dual_bound} and \eqref{eq:concentration_muphi}, and if 
		\begin{equation*}
			\frac{B(\delta)}{\sqrt{n}} \leq \rho \leq \frac{\radiuscrit}{4} - \frac{C(\delta)}{\sqrt{n}},
		\end{equation*}
		we can write with probability $1 - \delta$, for all $f \in \mathcal{F}$,
		\begin{align*}
			& \inf_{\lambda \in [0, \lambdaup) }  \left\{ \lambda \rho + \E_{\xi \sim \widehat{P}_n}[\phi(\lambda, f, \xi)] \right\}  
			= \inf_{\lambda \in [\lambdalow, \lambdaup)} \left\{ \lambda \rho + \E_{\xi \sim \widehat{P}_n}[\phi(\lambda, f,\xi)] \right\} \\ 
			& \geq \inf_{\lambda \in [\lambdalow, \lambdaup)}  \left\{ \lambda \rho + \E_{\xi \sim P}[\phi(\lambda, f,\xi)] - \lambda \frac{\E_{\xi \sim P}[\phi(\lambda, f,\xi)] - \E_{\xi \sim \widehat{P}_n}[\phi(\lambda, f,\xi)]}{\lambda}  \right\} \\
			& \geq \inf_{\lambda \in [\lambdalow, \lambdaup)}  \left\{ \lambda \rho + \E_{\xi \sim P}[\phi(\lambda, f, \xi)] - \lambda \alpha_n  \right\} \\
			& \geq  \inf_{\lambda \in [\lambdalow, \lambdaup)} \left\{\lambda \left( \rho - \frac{B(\delta)}{\sqrt{n}}\right) +  \E_{\xi \sim P}[\phi(\lambda, f, \xi)] \right\} \\
			& \geq \inf_{\lambda \in [0, \infty)} \left\{\lambda \left( \rho - \frac{B(\delta)}{\sqrt{n}}\right) +  \E_{\xi \sim P}[\phi(\lambda, f, \xi)] \right\},
		\end{align*}
		If $\lambdaup \leq \lambdalow$, this means, by convexity of the inner function,
		\begin{align*}
			\inf_{\lambda \in [0, \lambdaup) }  \left\{ \lambda \rho + \E_{\xi \sim \widehat{P}_n}[\phi(\lambda, f, \xi)] \right\} & = \lambdalow \rho + \E_{\xi \sim \widehat{P}_n}[\phi(\lambdalow, f, \xi)] \\ & \geq \lambdalow(\rho - \alpha_n') + \ExiP[\phi(\lambdalow, f,\xi)] \\ & \geq \inf_{\lambda \in [0, \infty)} \accol{\lambda \parx{\rho - \frac{B(\delta)}{\sqrt{n}}} + \ExiP[\phi(\lambda, f, \xi)]},
		\end{align*}
		where we refined $\alpha_n$ into $\alpha_n' = \sup_{f \in \F} \accol{\E_{\xi \sim P}[\psi(\lambdalow^{-1}, f,\xi)] - \E_{\xi \sim \widehat{P}_n}[\psi(\lambdalow^{-1}, f, \xi)]}$.
	\end{proof}

	\subsection{Generalization bounds}
	\label{subsection:appendix_main_results}

	We are now ready to prove the generalization bounds. The following is an extended version of the generalization result in standard WDRO (\Cref{th:generalization_standard}). Note that the extended bound \eqref{eq:ext} involves a control of $R_{\rho - \frac{\alpha}{\sqrt{n}}}(f)$, which means that $\widehat{R}_{\rho}(f)$ also generalizes well against distribution shifts. 
	
	\begin{theorem}[Generalization guarantee, standard WDRO] \label{corr:generalization_standard} Under \Cref{ass:general_assumptions}, there exists $\lambdalow > 0$ such that if 
		\begin{equation*}
			\frac{\alpha}{\sqrt{n}} < \rho \leq \frac{\radiuscrit}{4} - \frac{\beta}{\sqrt{n}},
		\end{equation*}
		where
		\begin{itemize}
			\item $\alpha = 48 \left(\maxF + \frac{1}{\lambdalow}\right) \left(\mathcal{I}(\mathcal{F}, \|\cdot\|_\infty ) + \frac{2}{\lambdalow}\right) + \frac{2 \maxF}{\lambdalow}\sqrt{2 \log \frac{2}{\delta}}$
			\item $\beta = \frac{96 \mathcal{I}(\mathcal{F},\|\cdot\|_\infty)}{\lambdalow} +  \frac{4\maxF}{\lambdalow} \sqrt{2 \log \frac{4}{\delta}}$,
		\end{itemize}
		then with probability at least $1-\delta$, for all $f \in \mathcal{F}$,
		\begin{equation}\label{eq:ext}
			\widehat{R}_{\rho}(f) \geq R_{\rho - \frac{\alpha}{\sqrt{n}}}(f) \geq \ExiP[f(\xi)].
		\end{equation}

		In particular, for any $\rho > \frac{\alpha}{\sqrt{n}}$ and $n > 16 (\alpha + \beta)^2 / \radiuscrit^2$,  with probability at least $1-\delta$, $\widehat{R}_{\rho}(f) \geq  \ExiP[f(\xi)]$ for all $f \in \mathcal{F}$.

	\end{theorem}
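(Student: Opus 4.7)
The plan is to assemble the theorem directly from the infrastructure developed in the preceding appendices, using strong duality to rewrite $\widehat{R}_\rho(f)$ as a dual infimum and then invoking the high-probability dual bounds. First, I would apply \Cref{prop:duality_standard} to obtain $\widehat{R}_\rho(f) = \inf_{\lambda \geq 0}\{\lambda\rho + \E_{\widehat{P}_n}[\phi(\lambda,f,\xi)]\}$ for every $f\in\F$. Fix $\lambdalow>0$ from \Cref{prop:lambda_star} and set $\lambdaup = \infty$ in the non-regularized setting. Then \Cref{th:general_bound} applies verbatim: on an event of probability at least $1-\delta$, provided $B(\delta)/\sqrt n \leq \rho \leq \radiuscrit/4 - C(\delta)/\sqrt n$, we have for all $f\in\F$
\begin{equation*}
\widehat{R}_\rho(f) \;\geq\; \inf_{\lambda\geq 0}\!\left\{\lambda\!\left(\rho-\tfrac{B(\delta)}{\sqrt n}\right) + \E_{\xi\sim P}[\phi(\lambda,f,\xi)]\right\} \;=\; R_{\rho-\frac{B(\delta)}{\sqrt n}}(f),
\end{equation*}
where the final equality reapplies strong duality, now under the true distribution $P$.

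Second, I would match the abstract constants $B(\delta), C(\delta)$ of \Cref{th:general_bound} with the concrete expressions $\alpha,\beta$ of the theorem. From \Cref{lem:constant_standard}.1(i) one reads $\Phi = 2\maxF$; from \Cref{lem:constant_standard}.2(i)--(ii) one reads $\Psi = 2\maxF/\lambdalow$ and $L_\psi = \maxF + 1/\lambdalow$. Substituting these into the formulas for $B(\delta)$ and $C(\delta)$ yields exactly the $\alpha$ and $\beta$ stated in \Cref{corr:generalization_standard}. Hence on the same $(1-\delta)$-event we get $\widehat{R}_\rho(f) \geq R_{\rho - \alpha/\sqrt n}(f)$.

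Third, for the chain $R_{\rho-\alpha/\sqrt n}(f) \geq \E_P[f]$, I would simply observe that $\rho - \alpha/\sqrt n > 0$ by hypothesis, and that $R_{\rho',P}(f) \geq \E_P[f]$ for every $\rho'\geq 0$, since the Dirac coupling $\pi=(\mathrm{id},\mathrm{id})_\# P$ lies in the feasible set $\{W_c(P,Q')\leq \rho'\}$ and realizes $Q'=P$. This yields the main display \eqref{eq:ext}. For the ``in particular'' clause, I would exploit monotonicity: the map $\rho\mapsto\widehat{R}_\rho(f)$ is nondecreasing (direct from \eqref{eq:wdro0}). The assumption $n > 16(\alpha+\beta)^2/\radiuscrit^2$ is precisely what makes the interval $(\alpha/\sqrt n,\,\radiuscrit/4-\beta/\sqrt n]$ nonempty, so for any $\rho > \alpha/\sqrt n$ we may select $\rho^\star$ in that interval with $\rho^\star \leq \rho$, apply the first part at $\rho^\star$, and conclude $\widehat{R}_\rho(f)\geq \widehat{R}_{\rho^\star}(f)\geq \E_P[f(\xi)]$.

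The main conceptual obstacle is already absorbed into \Cref{prop:lambda_star} and \Cref{prop:bound_dual_variable}: producing a deterministic $\lambdalow>0$ that uniformly lower-bounds the empirical dual minimizer across all $f\in\F$, which is what allows the nonsmooth function $\mu\phi(\mu^{-1},f,\xi)$ to be Lipschitz on the compact set $(0,\lambdalow^{-1}]\times\F$ and thus amenable to Dudley's entropy concentration \Cref{lem:concentration}. At the level of this theorem, the only remaining subtlety is bookkeeping the two failure events (one from \Cref{prop:bound_dual_variable} of mass $\delta/2$, one from the concentration of $\psi$ in \eqref{eq:concentration_muphi} of mass $\delta/2$) and verifying their union bound gives the stated $1-\delta$ guarantee.
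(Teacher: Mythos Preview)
Your proposal is correct and follows essentially the same approach as the paper: invoke \Cref{prop:duality_standard} for the dual representation, plug the concrete constants $\Phi=2\maxF$, $\Psi=2\maxF/\lambdalow$, $L_\psi=\maxF+\lambdalow^{-1}$, $\lambdaup=\infty$ from \Cref{lem:constant_standard} into \Cref{th:general_bound} to identify $B(\delta)=\alpha$ and $C(\delta)=\beta$, and finish with monotonicity of $\rho\mapsto\widehat{R}_\rho(f)$ for the ``in particular'' clause. Your explicit mention of the Dirac coupling for $R_{\rho-\alpha/\sqrt{n}}(f)\geq\E_P[f]$ and of the $\delta/2+\delta/2$ union bound simply makes two steps more explicit than in the paper's terse proof.
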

	
	\begin{proof} Under \Cref{ass:general_assumptions}, let $\lambdalow$ be given by \Cref{prop:bound_dual_variable}. Our goal is to apply \Cref{th:general_bound} in the standard WDRO case and to compute its constants thanks to \Cref{lem:constant_standard}. By \Cref{lem:constant_standard}, we have the following constants:

		\begin{itemize}
			\item $\Phi = 2\maxF$,
			\item $\Psi = \frac{2\maxF}{\lambdalow}$,
			\item $\lambdaup = \infty$, and $L_\psi = \maxF + \lambdalow^{-1}$.
		\end{itemize}
		
		$\alpha$ corresponds to $B(\delta)$ in \Cref{th:general_bound} and $\beta$ corresponds $C(\delta)$, whence we obtain the desired expressions for $\alpha$ and $\beta$ with the quantities above.
		
		By strong duality, \Cref{prop:duality_standard}, $R_\varrho(f)$ and $\widehat{R}_\varrho(f)$  admit the representations
		\begin{equation*}
			R_\varrho(f) = \inf_{\lambda \in [0, \infty)} \accol{\lambda \varrho + \ExiP[\phi(\lambda,f,\xi)]}
		\end{equation*}
		\begin{equation*}
			\widehat{R}_\varrho(f) = \inf_{\lambda \in [0, \infty)} \accol{\lambda \varrho + \E_{\xi \sim \widehat{P}_n}[\phi(\lambda,f,\xi)]},
		\end{equation*}
		for any $\varrho \geq 0$ and $f \in \mathcal{F}$. By \Cref{th:general_bound}, if $\frac{\alpha}{\sqrt{n}} < \rho \leq \frac{\radiuscrit}{4} - \frac{\beta}{\sqrt{n}}$, then with probability at least $1- \delta$, we have for all $f \in \mathcal{F}$, $\widehat{R}_\rho(f) \geq R_{\rho - \frac{\alpha}{\sqrt{n}}}(f)$, hence the first part of the result.
		
		As to the last statement, if $n > 16(\alpha + \beta)^2/\radiuscrit^2$, $\frac{\alpha}{\sqrt{n}} < \frac{\radiuscrit}{4} - \frac{\beta}{\sqrt{n}}$. For any $\frac{\alpha}{\sqrt{n}} < \rho \leq \frac{\radiuscrit}{4} - \frac{\beta}{\sqrt{n}}$, with probability at least $1 - \delta$, $\widehat{R}_\rho(f) \geq \ExiP[f(\xi)]$ for all $f \in \F$ as shown previously. For $\rho \geq \frac{\radiuscrit}{4} - \frac{\beta}{\sqrt{n}}$, since the quantity $\widehat{R}_\rho(f)$ is non-decreasing with respect to $\rho$, we also have $\widehat{R}_\rho(f) \geq \ExiP[f(\xi)]$.
	\end{proof}

	The next result corresponds to the generalization guarantee for WDRO with double regularization (\Cref{th:generalization_regularized}).
	
	\begin{theorem}[Generalization guarantee, regularized WDRO] \label{corr:generalization_regularized} Under \Cref{ass:general_assumptions}, there exists $\lambdalow > 0$ such that if $$\max \left\{m_c, \frac{\alpha^{\tau,\epsilon}}{\sqrt{n}} \right\} < \rho \leq \frac{\radiuscrit^{\tau,\epsilon}}{4} - \frac{\beta^{\tau,\epsilon}}{\sqrt{n}},$$
		where $\alpha^{\tau,\epsilon}$ and $\beta^{\tau,\epsilon}$ are the two constants
		\begin{align*}
			\text{\textbullet} \:\: \alpha^{\tau,\epsilon} & = 48 \left(\maxF + \frac{1}{\lambdalow^{\tau,\epsilon}} + \frac{2 \maxF m_c \epsilon}{\epsilon (\rho - m_c) + 2 \tau \maxF}\right) \left(\mathcal{I}(\mathcal{F}, \|\cdot\|_\infty) + \frac{2}{\lambdalow^{\tau,\epsilon}} \right)  \\ 
			& + \left(\frac{2 \maxF}{\lambdalow^{\tau,\epsilon}} + m_c \right) \sqrt{2 \log \frac{2}{\delta}} \\
			\text{\textbullet} \: \: \beta^{\tau,\epsilon} & =  \frac{96 \mathcal{I}(\mathcal{F}, \|\cdot\|_\infty)}{\lambdalow^{\tau,\epsilon}} + 4 \left(\frac{\maxF}{\lambdalow^{\tau,\epsilon}} + m_c\right) \sqrt{2 \log \frac{4}{\delta}},
		\end{align*}

		then with probability at least $1 - \delta$, for all  $f \in \F$,
		\begin{equation*}
			\widehat{R}_\rho^{\tau,\epsilon}(f) \geq R_{\rho - \frac{\alpha^{\tau,\epsilon}}{\sqrt{n}}}^{\tau,\epsilon}(f) \geq \E_{\zeta \sim Q}[f(\zeta)] - \epsilon \KL(\pi^{P,Q}\| \pi_0) 
		\end{equation*}
		whenever $W^{\tau}_c(P,Q) \leq \rho$.
		
		In particular, if $m_c < \frac{\radiuscrit^{\tau,\epsilon}}{4}$ and $n > \frac{16(\alpha^{\tau,\epsilon} + \beta^{\tau,\epsilon})^2}{(\radiuscrit^{\tau,\epsilon} - 4m_c)^2}$, then for any $\rho > \max \left\{m_c, \frac{\alpha^{\tau,\epsilon}}{\sqrt{n}}\right\}$, with probability at least $1 - \delta$, for all $Q$ such that $W^{\tau}_c(P,Q) \leq \rho$, $\widehat{R}^{\tau,\epsilon}_{\rho}(f) \geq \E_{\zeta \sim Q}[f(\zeta)] - \epsilon\KL(\pi^{P,Q}\| \pi_0)$ for all  $f \in \F$.

	\end{theorem}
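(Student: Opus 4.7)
The proof follows the template of Theorem \ref{corr:generalization_standard} applied to the regularized setting, exploiting the observation at the beginning of Appendix \ref{appendix:dualboundempirical} that Propositions \ref{prop:bound_dual_variable} and \ref{th:general_bound} transfer verbatim when $\phi$, $\psi$, $\radiuscrit$, $\radiusmax$, $\lambdalow$ are replaced by $\phi^{\tau,\epsilon}$, $\psi^{\tau,\epsilon}$, $\radiuscrit^{\tau,\epsilon}$, $\radiusmax^{\tau,\epsilon}$, $\lambdalow^{\tau,\epsilon}$. The dual lower bound $\lambdalow^{\tau,\epsilon}$ is supplied by Lemma \ref{prop:continuity_radius_regularized}, and the only genuine novelty compared to the unregularized case is the need for a dual upper bound $\lambdaup := 2\maxF/(\rho - m_c)$, furnished by Lemma \ref{prop:upper_bound}, which is what forces the assumption $\rho > m_c$.

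Concretely, I would proceed in three moves. First, I would apply strong duality (Proposition \ref{prop:duality_regularized}, applicable because $\rho > m_c$) to rewrite both $R_\rho^{\tau,\epsilon}(f)$ and $\widehat{R}_\rho^{\tau,\epsilon}(f)$ as infima of $\lambda \rho + \E[\phi^{\tau,\epsilon}(\lambda,f,\cdot)]$ (under $P$ and $\widehat{P}_n$ respectively), with Lemma \ref{prop:upper_bound} restricting the empirical infimum to $\lambda \in [0,\lambdaup)$. Second, I would invoke the regularized version of Proposition \ref{th:general_bound} with lower bound $\lambdalow^{\tau,\epsilon}$ and upper bound $\lambdaup$ to obtain, with probability $1-\delta$ and for all $f \in \F$,
\[
\widehat{R}_\rho^{\tau,\epsilon}(f) \;\geq\; \inf_{\lambda \geq 0}\left\{\lambda\bigl(\rho - B(\delta)/\sqrt{n}\bigr) + \E_{\xi \sim P}[\phi^{\tau,\epsilon}(\lambda, f, \xi)]\right\} \;=\; R_{\rho - B(\delta)/\sqrt{n}}^{\tau,\epsilon}(f),
\]
provided $B(\delta)/\sqrt{n} < \rho \leq \radiuscrit^{\tau,\epsilon}/4 - C(\delta)/\sqrt{n}$. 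Third, I would set $\alpha^{\tau,\epsilon}:=B(\delta)$ and $\beta^{\tau,\epsilon}:=C(\delta)$ and compute them explicitly by feeding into the general formulas of Proposition \ref{th:general_bound} the regularized constants from Lemma \ref{lem:constant_regularized}: the oscillation bound $\Phi = 2\maxF + 2\lambdalow^{\tau,\epsilon} m_c$ of $\phi^{\tau,\epsilon}$, the oscillation bound $\Psi = 2\maxF/\lambdalow^{\tau,\epsilon} + m_c$ of $\psi^{\tau,\epsilon}$, and the Lipschitz constant $L_\psi = \maxF + 1/\lambdalow^{\tau,\epsilon} + 2\maxF m_c \epsilon / ((\rho - m_c)\epsilon + 2\maxF\tau)$ of $\psi^{\tau,\epsilon}$ on $[\lambdaup^{-1}, \lambdalow^{-1}] \times \F$; a quick calculation yields exactly the $\alpha^{\tau,\epsilon}$ and $\beta^{\tau,\epsilon}$ displayed in the statement.

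The rightmost inequality then reduces to a feasibility argument: for any $Q$ with $W_c^\tau(P,Q) \leq \rho - \alpha^{\tau,\epsilon}/\sqrt{n}$, the optimal coupling $\pi^{P,Q}$ in \eqref{eq:Wtau} has first marginal $P$, second marginal $Q$, and satisfies $\E_{\pi^{P,Q}}[c] + \tau \KL(\pi^{P,Q}\|\pi_0) \leq \rho - \alpha^{\tau,\epsilon}/\sqrt{n}$, so it is admissible in the supremum defining $R_{\rho - \alpha^{\tau,\epsilon}/\sqrt{n}, P}^{\tau,\epsilon}(f)$, which yields directly the bound $R_{\rho - \alpha^{\tau,\epsilon}/\sqrt{n}}^{\tau,\epsilon}(f) \geq \E_{\zeta \sim Q}[f(\zeta)] - \epsilon \KL(\pi^{P,Q}\|\pi_0)$. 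The concluding ``in particular'' assertion then follows by combining the fact that the admissibility window is nonempty when $n > 16(\alpha^{\tau,\epsilon}+\beta^{\tau,\epsilon})^2/(\radiuscrit^{\tau,\epsilon}-4m_c)^2$ with the monotonicity of $\widehat{R}_\rho^{\tau,\epsilon}$ in $\rho$, which extends the bound past $\radiuscrit^{\tau,\epsilon}/4 - \beta^{\tau,\epsilon}/\sqrt{n}$ to every $\rho > \max\{m_c, \alpha^{\tau,\epsilon}/\sqrt{n}\}$.

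The main obstacle, and the sole point at which the regularized argument diverges from the standard one, is the loss of Lipschitzness of $(\mu,f)\mapsto \psi^{\tau,\epsilon}(\mu,f,\xi)$ as $\mu\to 0^+$ (illustrated by Proposition \ref{prop:nonlipschitz_psi}). Uniform concentration via Lemma \ref{lem:concentration} therefore cannot tolerate an unbounded range of $\mu$, and it is to repair precisely this issue that the dual upper bound $\lambdaup$ from Lemma \ref{prop:upper_bound} is needed; its cost is the hypothesis $\rho > m_c$, whose footprint remains visible in the $(\rho - m_c)$ appearing in the denominator of $\alpha^{\tau,\epsilon}$.
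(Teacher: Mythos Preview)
Your proposal is essentially identical to the paper's own proof: it applies the regularized version of Proposition~\ref{th:general_bound} (with $\lambdalow^{\tau,\epsilon}$ supplied by Lemma~\ref{prop:continuity_radius_regularized} and $\lambdaup=2\maxF/(\rho-m_c)$ by Lemma~\ref{prop:upper_bound}), reads off $\alpha^{\tau,\epsilon}=B(\delta)$ and $\beta^{\tau,\epsilon}=C(\delta)$ by plugging the constants $\Phi,\Psi,L_\psi$ of Lemma~\ref{lem:constant_regularized} into the general formulas, derives the second inequality from feasibility of the optimal coupling $\pi^{P,Q}$, and obtains the final assertion from monotonicity of $\widehat R_\rho^{\tau,\epsilon}$ in $\rho$. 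The only minor deviation is that for the feasibility step you assume $W_c^\tau(P,Q)\leq\rho-\alpha^{\tau,\epsilon}/\sqrt n$, which is what makes $\pi^{P,Q}$ transparently admissible at radius $\rho-\alpha^{\tau,\epsilon}/\sqrt n$, whereas the paper (matching the theorem statement) writes the conclusion under the looser hypothesis $W_c^\tau(P,Q)\leq\rho$.
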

	
	\begin{proof} Under \Cref{ass:general_assumptions}, let $\lambdalow^{\tau,\epsilon} > 0$ be given by \Cref{prop:bound_dual_variable}, and assume $\rho > m_c$. As for standard WDRO, our goal is to apply \Cref{th:general_bound} and to compute its constants thanks to \Cref{lem:constant_regularized}. By \Cref{lem:constant_regularized}, and taking $\lambdaup= \frac{2\maxF}{\rho - m_c}$, we have the following constants:
		
		\begin{itemize}
			\item $\Phi =  \maxF - (- \maxF - 2\lambdalow^{\tau,\epsilon} m_c) = 2 (\maxF + \lambdalow^{\tau,\epsilon} m_c)$
			\item $\Psi = \frac{2 \maxF}{\lambdalow^{\tau,\epsilon}} + m_c$
			\item $\lambdaup = \frac{2\maxF}{\rho - m_c}$ and $L_\psi = \maxF + \frac{1}{\lambdalow^{\tau,\epsilon}} + \frac{2 \maxF  m_c \epsilon}{\epsilon (\rho - m_c) + 2 \tau \maxF}$.
		\end{itemize}
		
		In \Cref{th:general_bound}, $\alpha^{\tau,\epsilon}$ corresponds to $B(\delta)$ and $\beta^{\tau,\epsilon}$ corresponds to $C(\delta)$ with the quantities above. In this case, we easily verify that $\alpha^{\tau,\epsilon}$ and $\beta^{\tau,\epsilon}$ have the desired expressions.
		
		By strong duality, \Cref{prop:duality_regularized}, and by the dual upper bound, \Cref{prop:upper_bound}, $R_\varrho^{\tau,\epsilon}(f)$ and $\widehat{R}_\varrho^{\tau,\epsilon}(f)$  admit the representations
		\begin{equation*}
			R_\varrho^{\tau,\epsilon}(f) = \inf_{\lambda \in [0, \lambdaup)} \accol{\lambda \varrho + \ExiP[\phi^{\tau,\epsilon}(\lambda,f,\xi)]}
		\end{equation*}
		\begin{equation*}
			\widehat{R}_\varrho^{\tau,\epsilon}(f) = \inf_{\lambda \in [0, \lambdaup)} \accol{ \lambda \varrho + \E_{\xi \sim \widehat{P}_n}[\phi^{\tau,\epsilon}(\lambda,f,\xi)]},
		\end{equation*}
		for any $\varrho \geq 0$ and $f \in \mathcal{F}$. Recall that $\rho > m_c$. If furthermore $\frac{\alpha^{\tau,\epsilon}}{\sqrt{n}} < \rho \leq \frac{\radiuscrit^{\tau,\epsilon}}{4} - \frac{\beta^{\tau,\epsilon}}{\sqrt{n}}$, then with probability at least $1 - \delta$, we have for all $f \in \mathcal{F}$, $\widehat{R}_{\rho}^{\tau,\epsilon}(f) \geq R_{\rho- \frac{\alpha}{\sqrt{n}}}^{\tau,\epsilon}(f)$ by \Cref{th:general_bound} hence we obtain the first inequality.

		Now, toward the second inequality, let $Q \in \PXi$ such that $W^{\tau}_c(P,Q) \leq \rho$. Let $\pi^{P,Q} \in \PXixi$ satisfying $[\pi^{P,Q}]_1 = P$, $[\pi^{P,Q}]_2 = Q$ and $\E_{(\xi,\zeta) \sim \pi^{P,Q}}[c(\xi,\zeta)] + \tau \KL(\pi^{P,Q} \| \pi_0) = W^{\tau}_c(P,Q)$. We finally obtain for all $f \in \F$, $R_{\rho- \frac{\alpha}{\sqrt{n}}}^{\tau,\epsilon}(f) \geq \E_{\zeta \sim Q}[f(\zeta)] - \epsilon \KL(\pi^{P,Q}\| \pi_0)$.
		
		As to the last statement, if $m_c < \frac{\radiuscrit^{\tau,\epsilon}}{4}$ and $n > \frac{16(\alpha^{\tau,\epsilon} + \beta^{\tau,\epsilon})^2}{(\radiuscrit^{\tau,\epsilon} - 4m_c)^2}$, then $\max \left\{m_c, \frac{\alpha^{\tau,\epsilon}}{\sqrt{n}}\right\} < \frac{\radiuscrit^{\tau,\epsilon}}{4} - \frac{\alpha^{\tau,\epsilon}}{\sqrt{n}}$. For any $\max \left\{m_c, \frac{\alpha^{\tau,\epsilon}}{\sqrt{n}}\right\} < \rho < \frac{\radiuscrit^{\tau,\epsilon}}{4} - \frac{\beta^{\tau,\epsilon}}{\sqrt{n}}$, the bound holds by the first part of the result. For $\rho \geq \frac{\radiuscrit^{\tau,\epsilon}}{4} - \frac{\beta^{\tau,\epsilon}}{\sqrt{n}}$, since $\widehat{R}_\rho^{\tau,\epsilon}(f)$ is non-decreasing with respect to $\rho$, the bound also holds.
	\end{proof}
	
	\subsection{Excess risk bounds}
	\label{subsection:excess_appendix}
	In this part, we prove the excess risk bounds (\Cref{th:excess_risk_standard} and \Cref{th:excess_risk_regularization}). The proofs consist in adapting the previous proofs of the generalization bounds. For standard WDRO, the general excess bound specializes in the case of Wasserstein-$p$ costs and Lipschitz losses.
	
	\begin{theorem}[Excess risk WDRO]\label{th:excess_risk_WDROappendix}
		Let $\alpha$ be given by \Cref{corr:generalization_standard}. Under \Cref{ass:general_assumptions}, if $\rho \leq \frac{\radiuscrit}{4} - \frac{\alpha}{\sqrt{n}}$, then with probability at least $1 - \delta$, for all $f \in \F$,
		\begin{equation*}
			\widehat{R}_{\rho}(f) \leq R_{\rho + \frac{\alpha}{\sqrt{n}}}(f).
		\end{equation*}
		In particular, if $c = d(\cdot,\cdot)^p$, where $p \geq 1$, and there exists $\operatorname{Lip}_{\F} > 0$ such that every $f \in \mathcal{F}$ is $\operatorname{Lip}_{\F}$-Lipschitz with respect to $d$, then
		\begin{equation*}
			\widehat{R}_{\rho}(f) \leq \E_{\xi \sim P}[f(\xi)]+\operatorname{Lip}_{\F} \left(\rho + \frac{\alpha}{\sqrt{n}}\right)^{\frac{1}{p}}.
		\end{equation*}
	\end{theorem}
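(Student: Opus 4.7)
The approach is to mirror the proof of the generalization bound (\Cref{th:generalization_standard}) but reverse the direction of the concentration inequality. By strong duality (\Cref{prop:duality_standard}), we express $\widehat{R}_\rho(f) = \inf_{\lambda \geq 0}\{\lambda\rho + \E_{\widehat{P}_n}[\phi(\lambda,f,\xi)]\}$. Under the assumed scaling $\rho \leq \radiuscrit/4 - \alpha/\sqrt{n}$, the empirical dual lower bound (\Cref{prop:bound_dual_variable}) restricts this infimum to $\lambda \in [\lambdalow, \infty)$ with probability at least $1 - \delta/2$. Independently, I would apply the uniform concentration \Cref{lem:concentration} to $\psi(\mu, f, \xi) = \mu\phi(\mu^{-1}, f, \xi)$ on $(0, \lambdalow^{-1}] \times \F$, with Lipschitz and boundedness constants supplied by \Cref{lem:constant_standard}, yielding with probability at least $1 - \delta/2$ the uniform bound $\E_{\widehat{P}_n}[\phi(\lambda, f, \xi)] \leq \E_P[\phi(\lambda, f, \xi)] + \lambda \alpha / \sqrt{n}$ for all $(\lambda, f) \in [\lambdalow, \infty) \times \F$.

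Combining these events via a union bound yields, with probability at least $1-\delta$,
$$\widehat{R}_\rho(f) \leq \inf_{\lambda \geq \lambdalow} \left\{ \lambda \left(\rho + \frac{\alpha}{\sqrt{n}}\right) + \E_P[\phi(\lambda, f, \xi)] \right\} \quad \text{for all } f \in \F.$$
To identify this right-hand side with $R_{\rho + \alpha/\sqrt{n}}(f)$, I invoke the dual lower bound on the \emph{true} problem (\Cref{prop:lambda_star}): under $\rho + \alpha/\sqrt{n} \leq \radiuscrit/4$, the convexity of $\phi(\cdot, f, \xi)$ together with $-\E_P[\partial_\lambda^+ \phi(\lambda, f, \xi)] \geq \radiusmax(\lambda) \geq \radiuscrit/4 \geq \rho + \alpha/\sqrt{n}$ for $\lambda \in [0, 2\lambdalow]$ forces the map $\lambda \mapsto \lambda(\rho + \alpha/\sqrt{n}) + \E_P[\phi(\lambda, f, \xi)]$ to be nonincreasing on $[0, 2\lambdalow]$. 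Its infimum over $[\lambdalow, \infty)$ therefore coincides with the unrestricted infimum over $[0, \infty)$, which equals $R_{\rho + \alpha/\sqrt{n}}(f)$ by strong duality, establishing the first claimed inequality.

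For the Lipschitz specialization with $c = d^p$, I would couple $\widehat{R}_\rho(f) \leq R_{\rho + \alpha/\sqrt{n}}(f)$ with a standard transport estimate. For any admissible $Q$ (that is, $W_c(P, Q) \leq \rho + \alpha/\sqrt{n}$) and any coupling $\pi$ realizing or nearly realizing the transport cost, the $\operatorname{Lip}_\F$-Lipschitzness of $f$ together with Jensen's inequality applied to the convex map $t \mapsto t^p$ (with $p \geq 1$) yields
$$\E_Q[f] - \E_P[f] = \E_\pi[f(\zeta) - f(\xi)] \leq \operatorname{Lip}_\F \E_\pi[d(\xi, \zeta)] \leq \operatorname{Lip}_\F \bigl(\E_\pi[d(\xi, \zeta)^p]\bigr)^{1/p} \leq \operatorname{Lip}_\F \bigl(\rho + \alpha/\sqrt{n}\bigr)^{1/p}.$$
Taking the supremum over admissible $Q$ gives $R_{\rho + \alpha/\sqrt{n}}(f) \leq \E_P[f] + \operatorname{Lip}_\F(\rho + \alpha/\sqrt{n})^{1/p}$, completing the second inequality.

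The main subtle point, which I expect to be the principal obstacle, is reconciling the restricted infimum $\inf_{\lambda \geq \lambdalow}$ produced by the empirical concentration with the unrestricted infimum defining $R_{\rho + \alpha/\sqrt{n}}(f)$: naively we only have $\inf_{\lambda \geq \lambdalow}\{\cdot\} \geq \inf_{\lambda \geq 0}\{\cdot\}$, which goes the wrong way. The resolution is that the true dual lower bound (\Cref{prop:lambda_star}), applied to the perturbed radius $\rho + \alpha/\sqrt{n}$, guarantees that the true minimizer itself lies beyond $2\lambdalow > \lambdalow$, so the two infima coincide. Once this equivalence is secured, the chain of inequalities closes cleanly using only the concentration constants from \Cref{lem:constant_standard} that already appear in the proof of \Cref{th:generalization_standard}.
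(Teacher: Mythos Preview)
Your overall strategy is correct and very close to the paper's, but you take one detour that is both unnecessary and not fully justified under the theorem's hypothesis. You invoke the \emph{empirical} dual lower bound (\Cref{prop:bound_dual_variable}) to rewrite $\widehat{R}_\rho(f)$ as an infimum over $[\lambdalow,\infty)$. This costs an extra $\delta/2$ and, more importantly, \Cref{prop:bound_dual_variable} requires $\rho \le \frac{\radiuscrit}{4}-\frac{C(\delta)}{\sqrt{n}}$ with $C(\delta)=\beta$, whereas the theorem only assumes $\rho \le \frac{\radiuscrit}{4}-\frac{\alpha}{\sqrt{n}}$; there is no reason to have $\alpha\ge\beta$ in general (compare the $\sqrt{\log}$ terms). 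So as written, that step is not licensed.

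The fix is exactly what the paper does and what you essentially rediscover at the end of your argument: one never needs the empirical lower bound here. Since $\widehat{R}_\rho(f)=\inf_{\lambda\ge 0}\{\cdot\}\le\inf_{\lambda\ge\lambdalow}\{\cdot\}$ trivially, the only nontrivial equality to secure is on the \emph{true} side, and that is handled by the deterministic \Cref{prop:lambda_star} at the perturbed radius $\rho'=\rho+\alpha/\sqrt n\le\radiuscrit/4$. The paper simply runs your chain from the other end: it starts from $R_{\rho'}(f)=\inf_{\lambda\ge\lambdalow}\{\lambda\rho'+\E_P[\phi]\}$ (by \Cref{prop:lambda_star}), applies a single concentration event to pass to $\E_{\widehat P_n}$, and then relaxes to $\inf_{\lambda\ge 0}$ to identify $\widehat R_{\rho}(f)$. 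With the unnecessary step removed, your argument and the paper's coincide; your Lipschitz specialization via a coupling and Jensen is exactly the paper's.
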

	
	\begin{proof}
		By definition of $\lambdalow$ and \Cref{prop:lambda_star} we can write for any $0 < \rho' \leq \frac{\radiuscrit}{4}$,
		\begin{equation*}
			R_{\rho'}(f) = \inf_{\lambda \in [\lambdalow, \lambdaup)} \left\{ \lambda \rho' + \ExiP[\phi(\lambda,f,\xi)]\right\}
		\end{equation*}
		leading to
		\begin{align*}
			R_{\rho'}(f) &  
			= \inf_{\lambda \in [\lambdalow, \lambdaup)} \left\{ \lambda \rho' + \E_{\xi \sim P}[\phi(\lambda, f,\xi)] \right\} \\ 
			& \geq \inf_{\lambda \in [\lambdalow, \lambdaup)}  \left\{ \lambda \rho' + \E_{\xi \sim \widehat{P}_n}[\phi(\lambda, f,\xi)] - \lambda \frac{\E_{\xi \sim \widehat{P}_n}[\phi(\lambda, f,\xi)] - \E_{\xi \sim P}[\phi(\lambda, f,\xi)]}{\lambda}  \right\} \\
			& \geq \inf_{\lambda \in [\lambdalow, \lambdaup)}  \left\{ \lambda \rho' + \E_{\xi \sim \widehat{P}_n}[\phi(\lambda, f, \xi)] - \lambda \alpha_n  \right\} \\
			& \geq  \inf_{\lambda \in [\lambdalow, \lambdaup)} \left\{\lambda \left( \rho' - \frac{B(\delta)}{\sqrt{n}}\right) +  \E_{\xi \sim \widehat{P}_n}[\phi(\lambda, f, \xi)] \right\} \\
			& \geq \inf_{\lambda \in [0, \infty)} \left\{\lambda \left( \rho' - \frac{B(\delta)}{\sqrt{n}}\right) +  \E_{\xi \sim \widehat{P}_n}[\phi(\lambda, f, \xi)] \right\} = \widehat{R}_{\rho' - \frac{B(\delta)}{\sqrt{n}}}(f).
		\end{align*}
		whenever $\rho' > B(\delta) / \sqrt{n}$, and the inequality holds with probability at least $1-\delta$ for all $f \in \F$. Recall also that $B(\delta) = \alpha$ (see the proof of \Cref{corr:generalization_standard}). Taking $\rho' = \rho + \frac{\alpha}{\sqrt{n}}$ leads to the desired result as long as $\rho +  \frac{\alpha}{\sqrt{n}} \leq \frac{\radiuscrit}{4}$.
		
		Toward a proof of the last part, assume that any $f \in \F$ is Lipschitz with constant $\operatorname{Lip}_\F$, and $c = d^p$ with $p \geq 1$. For any couple $(\xi,\zeta) \in \Xi \times \Xi$, we have 
		\begin{equation*}
			f(\zeta) \leq f(\xi) + \operatorname{Lip}_\F d(\xi,\zeta). 
		\end{equation*}
		Integrating over an arbitrary coupling $\pi$ with first marginal $P$ and second marginal $Q$ satisfying $W_c(Q,P) \leq \rho + \alpha / \sqrt{n}$ gives
		\begin{equation*}
			\E_{Q}[f] \leq \E_{P}[f] + \operatorname{Lip}_\F \E_{\pi}[d] \leq \E_{P}[f] +  \operatorname{Lip}_\F \E_{\pi}[c]^{\frac{1}{p}}
		\end{equation*}
		where we used Jensen inequality. For any $Q$ satisfying $W_c(Q,P) \leq \rho + \alpha / \sqrt{n}$, taking the infimum in the above inequality over such couplings $\pi$, gives 
		\begin{equation*}
			\E_Q[f] \leq \E_P[f] +  \operatorname{Lip}_\F  (\rho + \alpha / \sqrt{n})^{\frac{1}{p}}
		\end{equation*}
		hence we obtain the result by definition of $R_{\rho + \frac{\alpha}{\sqrt{n}}}(f)$.
	\end{proof}
	
	\begin{theorem}[Excess risk for regularized WDRO]\label{th:excess_risk_Regularizedappendix}
		Let $\alpha^{\tau,\epsilon}$ be given by \Cref{corr:generalization_regularized}. Under \Cref{ass:general_assumptions}, if $m_c < \rho \leq \frac{\radiuscrit^{\tau,\epsilon}}{4} - \frac{\alpha^{\tau,\epsilon}}{\sqrt{n}}$, then with probability at least $1 - \delta$, for all $f \in \F$,
		
		\begin{equation*}
			\widehat{R}_{\rho}^{\tau,\epsilon}(f) \leq R_{\rho + \frac{\alpha^{\tau,\epsilon}}{\sqrt{n}}}^{\tau,\epsilon}(f).
		\end{equation*}
	\end{theorem}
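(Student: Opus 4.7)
The plan is to mirror the proof of \Cref{th:excess_risk_WDROappendix}, replacing $\phi,\psi,\radiuscrit,\lambdalow$ by their regularized counterparts and additionally tracking the dual upper bound $\lambdaup^{\tau,\epsilon}:=\frac{2\maxF}{\rho-m_c}$ from \Cref{prop:upper_bound}, which is specific to the regularized setting. The hypothesis $m_c<\rho$ is precisely what is needed for strong duality (\Cref{prop:duality_regularized}) and for the upper bound to be well defined; together with $\rho\le\radiuscrit^{\tau,\epsilon}/4-\alpha^{\tau,\epsilon}/\sqrt n$, it also guarantees that the shifted radius $\rho':=\rho+\alpha^{\tau,\epsilon}/\sqrt n$ lies in $(m_c,\radiuscrit^{\tau,\epsilon}/4]$, so that both \Cref{prop:continuity_radius_regularized} and \Cref{prop:upper_bound} apply at $\rho'$.

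The first step is to use \Cref{prop:duality_regularized} together with the deterministic dual-bound lemmas \Cref{prop:continuity_radius_regularized} and \Cref{prop:upper_bound} applied at $\rho'$, to write the true robust risk as a restricted infimum
\begin{equation*}
R_{\rho'}^{\tau,\epsilon}(f) = \inf_{\lambda \in [\lambdalow^{\tau,\epsilon}, \lambdaup^{\tau,\epsilon})} \accol{\lambda \rho' + \ExiP[\phi^{\tau,\epsilon}(\lambda, f, \xi)]};
\end{equation*}
contrary to the empirical analogue of \Cref{prop:bound_dual_variable}, this restriction is purely deterministic, so no concentration is required at this step. The second step is to apply \Cref{lem:concentration}(1) to the scaled dual generator $\psi^{\tau,\epsilon}(\mu,f,\xi)=\mu\,\phi^{\tau,\epsilon}(\mu^{-1},f,\xi)$ over the compact set $[(\lambdaup^{\tau,\epsilon})^{-1},(\lambdalow^{\tau,\epsilon})^{-1}]\times\F$, using the uniform bound and Lipschitz constant from \Cref{lem:constant_regularized}. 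After dividing and multiplying by $\lambda$ to reduce to $\psi^{\tau,\epsilon}$, with probability at least $1-\delta$ this yields uniformly in $f\in\F$ and $\lambda\in[\lambdalow^{\tau,\epsilon},\lambdaup^{\tau,\epsilon})$,
\begin{equation*}
\ExiP[\phi^{\tau,\epsilon}(\lambda,f,\xi)] \ge \E_{\xi\sim\widehat{P}_n}[\phi^{\tau,\epsilon}(\lambda,f,\xi)] - \lambda\,\frac{\alpha^{\tau,\epsilon}}{\sqrt n}.
\end{equation*}

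Plugging this inequality into the dual representation of $R_{\rho'}^{\tau,\epsilon}(f)$ and absorbing the $\lambda\,\alpha^{\tau,\epsilon}/\sqrt n$ term into a downward shift of the radius gives a lower bound of the form $\inf_{\lambda\in[\lambdalow^{\tau,\epsilon},\lambdaup^{\tau,\epsilon})}\{\lambda\rho+\E_{\xi\sim\widehat{P}_n}[\phi^{\tau,\epsilon}(\lambda,f,\xi)]\}$; enlarging this range to $[0,\infty)$ can only decrease the infimum, and by strong duality for $\widehat{P}_n$ (valid since $\rho>m_c$) the unrestricted infimum equals $\widehat{R}_\rho^{\tau,\epsilon}(f)$, which is the claim. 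The only real obstacle is a bookkeeping one: confirming that the concentration constant coming out of \Cref{lem:concentration} with the quantities of \Cref{lem:constant_regularized} is verbatim the $\alpha^{\tau,\epsilon}$ defined in \Cref{corr:generalization_regularized}, which is immediate from the construction of $\alpha^{\tau,\epsilon}$ there (same $\Phi$, $\Psi$, $L_\psi$, $\lambdaup$), and noting that shifting the upper bound from $\lambdaup^{\tau,\epsilon}(\rho)$ to $\lambdaup^{\tau,\epsilon}(\rho')<\lambdaup^{\tau,\epsilon}(\rho)$ is harmless since the Lipschitz constant in \Cref{lem:constant_regularized} is monotone in $\lambdaup$.
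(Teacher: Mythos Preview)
Your proposal is correct and follows essentially the same approach as the paper: the paper's own proof simply notes that strong duality holds since $\rho>m_c$ (\Cref{prop:duality_regularized}) and then refers verbatim to the argument of \Cref{th:excess_risk_WDROappendix}, which is exactly the chain of steps you have written out. Your additional care about the $\lambdaup^{\tau,\epsilon}$ bookkeeping (working at $\rho'$ and using monotonicity of the Lipschitz constant in $\lambdaup$) is a correct way to justify the details that the paper leaves implicit in the phrase ``identical to the standard case''.
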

	
	\begin{proof} For $\rho > m_c$, strong duality holds (\Cref{prop:duality_regularized}). The proof is then identical to that of the standard WDRO setting (\Cref{th:excess_risk_WDROappendix}).    
	\end{proof}

	\subsection{Generalization constants of linear models}
	\label{subsection:linearmodels_appendix}
	The two following results correspond to \Cref{prop:linearmodels}, which is the estimation of the constants $\radiuscrit$ and $\lambdalow$ for linear models in the framework of \cite{shafieezadeh2019linear}. We assume the support of $P$ to belong to an Euclidean ball of diameter $D$ centered at zero. We then define $\Xi$ as the closed ball of diameter $3D$ centered at zero.
	\begin{proposition}[Linear regression] \label{prop:linear_regression_appendix}
		Consider the parametric loss $f(\theta,(x,y)) = (\langle \theta, x \rangle - y)^2$, where $\theta$ belongs to a compact subset $\Theta \subset \R^p$, and the transport cost $c = \|\cdot - \cdot\|^2$. Assume there exists $\omega > 0$ such that 
		\begin{equation*}
			\inf_{\theta \in \Theta} \|(\theta,-1)\|^2 \geq \omega.
		\end{equation*}
		Then \Cref{th:generalization_standard} and \Cref{th:excess_risk_standard} hold with $\radiuscrit \geq D^2$ and $\lambdalow \geq \frac{\omega}{2}$.
	\end{proposition}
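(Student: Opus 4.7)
The plan splits into two independent bounds, one for each constant.

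For $\radiuscrit \geq D^2$, the loss $f_\theta(\zeta) = \langle(\theta,-1),\zeta\rangle^2$ is a rank-one convex quadratic on the ball $\Xi$ of radius $3D/2$, so its maximum over $\Xi$ is attained precisely at the two antipodal boundary points $\zeta^\pm = \pm(3D/2)(\theta,-1)/\|(\theta,-1)\|$, each of norm $3D/2$. For any $\xi$ in the support of $P$, which lies in the ball of radius $D/2$, the triangle inequality gives $\|\xi - \zeta^\pm\| \geq 3D/2 - D/2 = D$, hence $c(\xi,\zeta^\pm) = \|\xi - \zeta^\pm\|^2 \geq D^2$. Averaging over $\xi \sim P$ and taking the infimum over $\theta$ yields $\radiuscrit \geq D^2$. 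Symmetrically, $\|\xi - \zeta^\pm\| \leq 2D$ gives $\radiuscrit \leq 4D^2$, a bound I will need in the next step.

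For $\lambdalow \geq \omega/2$, it suffices, by the definition of $\lambdalow$ through the maximal radius in \Cref{prop:continuity_radius}, to show $\radiusmax(\lambda) \geq \radiuscrit/4$ for all $\lambda \in [0,\omega]$. Since $\radiuscrit/4 \leq D^2$ from the first step, I aim for the stronger inequality $\radiusmax(\lambda) \geq D^2$. Fix $\theta \in \Theta$ and $\xi$, and set $G(\zeta) := f_\theta(\zeta) - \lambda\|\zeta - \xi\|^2$. Its Hessian $2(\theta,-1)(\theta,-1)^\top - 2\lambda I$ has eigenvalues $2(\|(\theta,-1)\|^2 - \lambda) \geq 2(\omega - \lambda) \geq 0$ along $(\theta,-1)$ and $-2\lambda \leq 0$ in the orthogonal subspace. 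Hence for $\lambda < \omega$, the Hessian has a strictly positive eigenvalue, so no interior critical point satisfies the second-order necessary condition $H \preceq 0$ for a local maximum, and $\argmax_\Xi G \subset \partial \Xi$. Every maximizer $\zeta^\star$ therefore satisfies $\|\zeta^\star\| = 3D/2$, giving $c(\xi,\zeta^\star) \geq D^2$ by the same triangle inequality. The envelope formula (\Cref{th:envelope_formula}) identifies $-\partial_\lambda^+ \phi(\lambda,f_\theta,\xi)$ with $\min\{c(\xi,\zeta) : \zeta \in \argmax_\Xi G\}$, which is therefore at least $D^2$; averaging over $\xi$ and taking the infimum over $\theta$ gives $\radiusmax(\lambda) \geq D^2$.

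The main obstacle is the boundary case $\lambda = \omega$ combined with a minimizing parameter $\theta^\star$ at which $\|(\theta^\star,-1)\|^2 = \omega$: the positive eigenvalue of the Hessian collapses to zero, $G$ becomes affine along $(\theta^\star,-1)$, and degenerate interior maxima of $G$ can sit on the whole line through $\xi$ in direction $(\theta^\star,-1)$ precisely when $\langle(\theta^\star,-1),\xi\rangle = 0$. I would dispatch this either by a one-sided limit argument, exploiting that $\radiusmax$ is non-increasing in $\lambda$ as an infimum of non-increasing right derivatives of $\lambda$-convex functions, so that the bound on $[0,\omega)$ extends by monotonicity to $\lambda = \omega$, or by observing that the exceptional hyperplane in $\xi$-space carries zero mass under generic $P$ and so does not affect the $P$-expectation. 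Once this step is cleared, plugging $\radiuscrit \geq D^2$ and $\lambdalow \geq \omega/2$ into \Cref{th:generalization_standard} and \Cref{th:excess_risk_standard} finishes the proposition.
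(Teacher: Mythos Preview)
Your approach is essentially the same as the paper's: both analyze the quadratic form of $f_\theta - \lambda c(\xi,\cdot)$ to force every maximizer onto $\partial\Xi$ when $\lambda < \omega$, then use the radial gap $3D/2 - D/2 = D$ to lower-bound the cost by $D^2$, and finally invoke $\radiuscrit \leq 4D^2$ so that $D^2 \geq \radiuscrit/4$ and hence $\lambdalow \geq \omega/2$. The paper does the $\radiuscrit$ and $\radiusmax$ steps in one pass rather than separately, but the content is identical.

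One caveat on your edge-case discussion: the monotonicity fix does not work as stated. That $\radiusmax$ is non-increasing gives $\radiusmax(\omega) \leq \radiusmax(\lambda)$ for $\lambda < \omega$, which is the wrong direction; a non-increasing function can jump down at $\omega$, so a lower bound on $[0,\omega)$ does not automatically extend. The paper simply ignores this (it writes $\|(\theta,-1)\|^2 - \lambda > 0$ for $\lambda \leq \omega$, which is strict only when the infimum over $\Theta$ exceeds $\omega$). The cleanest repair is to note that since $\Theta$ is compact the infimum is attained, so one may take $\omega$ equal to that minimum and then the Hessian eigenvalue is strictly positive on $[0,\omega')$ for every $\omega' < \omega$, giving $\lambdalow \geq \omega'/2$ for all such $\omega'$; as $\lambdalow$ enters the theorems only through the constants $\alpha,\beta$, this yields the stated bounds with $\lambdalow = \omega/2$.
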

	\begin{proof}
		In this setting, the expression of $\radiusmax$ is
		
		$$\radiusmax(\lambda) = \inf_{\theta \in \Theta} \ExiP \left[ \min \left\{ \|\xi - \zeta' \|^2 \ : \ \zeta' \in \argmax_{\zeta \in \Xi} \{f(w,\zeta) - \lambda \|\xi - \zeta\|^2 \}\right\}\right] $$
		
		For any $\xi \in \Xi$ and $\theta \in \Theta$, the term inside the argmax writes
		$$f(\theta,\zeta) - \lambda \|\xi - \zeta\|^2 = \zeta^T (M - \lambda I) \zeta + 2 \lambda \zeta^T \xi - \lambda \|\xi\|^2.$$
		
		Consider $\zeta = (u,v)$, $\xi = (u_0,v_0)$, $u, u_0 \in \R$, the representations in an orthonormal basis of $\R^{p}$, such that the first element $(\theta, - 1) / \|(\theta, -1)\|$ is the eigen vector of $M$. We can write the above equation with $u$ and $v$ terms:
		\begin{equation}
			f(\theta,\zeta) - \lambda \|\xi - \zeta\|^2 = (\|(\theta, - 1)\|^2 - \lambda) u^2 + 2 \lambda  u \cdot u_0 - \lambda \|v\|^2 + 2 \lambda \langle v, v_0 \rangle
		\end{equation}

		If $\lambda \leq \omega$ then we have $\|(\theta, - 1)\|^2 - \lambda > 0$, hence the maximum of $f(\theta,\zeta) - \lambda \|\xi - \zeta\|^2$ with respect to $\zeta$ is only attained at the boundary of $\Xi$ (otherwise we could increase the quadratic term with respect to $u$). For all $\lambda \leq \omega$, we thus can bound from below $$\radiusmax(\lambda) \geq \ExiP[\min \|\xi - \zeta\|^2 \ : \ \|\zeta\| = 3D/2 ] \geq D^2.$$ 
		
		In particular, $\radiuscrit \geq D^2$. We also remark that $\radiuscrit \leq 4D^2$ hence we have $2\lambdalow \geq \omega$ by definition of $\lambdalow$ (\Cref{prop:lambda_star}). 
	\end{proof}

	\begin{proposition}[Logistic regression]
		Consider the parametric loss 
		$f(\theta,(x,y)) = \log (1+e^{- y \langle \theta, x\rangle})$ where $\theta$ belongs to a compact subset $\Theta \subset \R^p$, and the transport cost $c = \|\cdot - \cdot\|^2$. Assume there exists $\omega > 0$ such that
		\begin{equation*}
			\inf_{\theta \in \Theta} \|\theta\|^2 \geq \omega.
		\end{equation*}
		
		Then \Cref{th:generalization_standard} and \Cref{th:excess_risk_standard} hold with $\radiuscrit \geq D^2$ and $\lambdalow \geq \frac{\omega}{8 \big(1 + e^{D\Omega}\big)}$, where $\Omega = \sup_{\theta \in \Theta} \|\theta\|^2$.
	\end{proposition}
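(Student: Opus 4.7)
The plan is to mimic the proof of \Cref{prop:linear_regression_appendix}, substituting the logistic loss for the squared loss and analyzing $\radiusmax(\lambda)$ directly. Following the same geometric setup, I take $\Xi$ to be the closed Euclidean ball of diameter $3D$ centered at the origin in the $(x,y)$-space, so that $\mathrm{supp}(P) \subset B(0,D/2) \subset \Xi$ and every $\xi \in \mathrm{supp}(P)$ is at distance at least $D$ from $\partial\Xi$. The target is to lower-bound $\radiusmax(\lambda) = \inf_{\theta \in \Theta} \E_{\xi \sim P}\bigl[\min\{\|\xi-\zeta\|^2 : \zeta \in \argmax_\Xi\bigl(f(\theta,\cdot) - \lambda\|\xi-\cdot\|^2\bigr)\}\bigr]$ for all $\lambda$ up to a threshold determined by $\omega$ and $D\Omega$.

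For the critical radius, note that at $\lambda = 0$ the gradient $\nabla_\zeta f(\theta,\zeta) = \sigma(-y\langle\theta,x\rangle)\,(-y\theta,\,-\langle\theta,x\rangle)$ is non-vanishing whenever $\|\theta\|^2 \geq \omega > 0$ and $y = \pm 1$, and the logistic loss is strictly monotone along the direction $-y\theta$ in the $x$-variable. Consequently, $\argmax_\Xi f(\theta,\cdot)$ lies on the boundary $\partial\Xi$, and the chosen geometry gives $c(\xi,\zeta^*) = \|\xi - \zeta^*\|^2 \geq D^2$ for any such $\zeta^*$, hence $\radiuscrit = \radiusmax(0) \geq D^2$.

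For $\lambda > 0$, I would next analyze the first-order optimality conditions of $g_\lambda(\zeta) := f(\theta,\zeta) - \lambda\|\xi-\zeta\|^2$ at an interior critical point $\zeta^* = (x^*,y^*)$:
\begin{equation*}
-y^* \sigma^* \theta = 2\lambda(x^* - \xi_x), \qquad -\sigma^*\langle\theta,x^*\rangle = 2\lambda(y^* - \xi_y),
\end{equation*}
where $\sigma^* := \sigma(-y^*\langle\theta,x^*\rangle)$. Taking squared norms and summing yields
\begin{equation*}
\|\zeta^* - \xi\|^2 \;=\; \frac{(\sigma^*)^2\bigl((y^*)^2\|\theta\|^2 + \langle\theta,x^*\rangle^2\bigr)}{4\lambda^2} \;\geq\; \frac{(\sigma^*)^2\,\omega}{4\lambda^2},
\end{equation*}
using $(y^*)^2 = 1$ and $\|\theta\|^2 \geq \omega$. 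A uniform lower bound $\sigma^* \geq 1/(1+e^{D\Omega})$ then follows from the bounds $\|x^*\| \leq 3D/2$, $|y^*| \leq 1$, and $\|\theta\|^2 \leq \Omega$ available on $\Xi \times \Theta$. Combining with the boundary case (which trivially gives $\|\xi-\zeta^*\|^2 \geq D^2$), I conclude that $\radiusmax(\lambda) \geq D^2/4 = \radiuscrit/4$ for all $\lambda$ not exceeding the announced threshold $\omega/(4(1+e^{D\Omega}))$, and the estimate $\lambdalow \geq \omega/(8(1+e^{D\Omega}))$ follows from \Cref{prop:lambda_star}.

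The main obstacle will be the non-concavity of the logistic loss. Unlike the squared loss of \Cref{prop:linear_regression_appendix}, whose Hessian has a clean rank-one positive structure that forces boundary maximizers for all $\lambda \leq \omega$, the Hessian of $f(\theta,\cdot)$ contains a saddle-like bilinear block between the $x$ and $y$ variables, so interior critical points need not be local maxima and may coexist with the boundary maximizer. Certifying that any \emph{global} interior maximum must satisfy the first-order distance bound---rather than merely being some spurious saddle or local minimum---will require either a careful second-order analysis of $g_\lambda$ or a direct variational comparison between interior critical values and boundary values of $g_\lambda$ in the regime $\lambda \leq \omega/(4(1+e^{D\Omega}))$.
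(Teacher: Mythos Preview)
Your first-order critical-point route is genuinely different from what the paper does, and the obstacle you flag at the end is real: stationarity alone cannot tell you that an interior critical point of $g_\lambda$ is a \emph{maximum}, so your distance inequality $\|\zeta^*-\xi\|^2\ge(\sigma^*)^2\omega/(4\lambda^2)$ is only informative for points that happen to be maximizers, and you never establish that. There is also a quantitative slip: the lower bound you write for $\sigma^*$ does not follow from the constraints you list (with $\|x^*\|\le 3D/2$ and $\|\theta\|\le\sqrt{\Omega}$ you get $|y^*\langle\theta,x^*\rangle|\le \tfrac{3D}{2}\sqrt{\Omega}$, not $D\Omega$), and even granting it, the threshold on $\lambda$ your inequality would produce scales like $\sqrt{\omega}/(1+e^{D\Omega})$, not the claimed $\omega/(4(1+e^{D\Omega}))$.

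The paper bypasses all of this by going straight to a second-order argument, exactly in the spirit of the linear-regression proof you say you want to mimic. It writes $\zeta=s\theta+v$ with $v\perp\theta$, so that the loss depends on $\zeta$ only through $s$, and computes the second derivative of $g_\lambda$ in $s$:
\[
\partial_s^2\,g_\lambda(s\theta+v)\;=\;\frac{\|\theta\|^4}{(1+e^{s\|\theta\|^2})(1+e^{-s\|\theta\|^2})}\;-\;2\lambda\|\theta\|^2.
\]
Bounding the denominator by $2(1+e^{D\Omega})$ on $\Xi$ shows this is strictly positive whenever $\lambda<\omega/(4(1+e^{D\Omega}))$. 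Strict convexity in the $s$-direction forces every maximizer of $g_\lambda$ to lie on $\partial\Xi$, giving $\radiusmax(\lambda)\ge D^2$ on that whole range of $\lambda$; combined with the trivial upper bound $\radiuscrit\le 4D^2$, this yields $2\lambdalow\ge\omega/(4(1+e^{D\Omega}))$. The point is that once you have a direction of strict convexity, there are no interior maximizers to classify, and the threshold on $\lambda$ drops out directly from the Hessian bound---you never need to touch first-order conditions or estimate $\sigma^*$.
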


	\begin{proof} For the logistic regression, we have
		\begin{equation}
			\label{eq:logistic}
			f(\theta,\zeta) - \lambda \|\zeta - \xi\|^2 = \log \big(1 + e^{\langle \theta, \zeta \rangle}\big) - \lambda \|\zeta - \xi\|^2.
		\end{equation}
		Consider the representation $\zeta = s \theta + v$, where $s \in \R$ and $v$ is orthogonal to $\theta$. Then we have
		\begin{equation*}
			f(\theta,\zeta) - \lambda \|\zeta - \xi\|^2 = \log \big(1 + e^{s \|\theta\|^2}\big) - s^2 \lambda \|\theta\|^2 + 2 s \lambda \langle \theta, \xi \rangle - \lambda \|v - \xi\|^2.
		\end{equation*}

		The second order derivative with respect to $s$ is 
		\begin{equation}
			\label{eq:second_order_s}
			\frac{\|\theta\|^4}{\big(1 + e^{s \|\theta\|^2}\big)\big(1 + e^{-s \|\theta\|^2}\big)} - 2 \lambda \|\theta\|^2.
		\end{equation}
		The term $\big(1 + e^{s \|\theta\|^2}\big)\big(1 + e^{-s \|\theta\|^2}\big)$ is lower than $2 \big(1 + e^{|s|\|\theta\|^2}\big) < 2 \big(1 + e^{D\Omega}\big)$. Hence we easily deduce that \eqref{eq:second_order_s} is positive for all $\zeta \in \Xi$ if $\lambda < \frac{\omega}{4 \big(1+e^{D \Omega}\big)}$. If this condition holds, then maximizers of $f(\theta,\zeta) - \lambda \|\zeta - \xi\|^2$ for $\zeta \in \Xi$ are included in the boundary of $\Xi$, meaning that $\radiusmax(\lambda) \geq D^2$ if $\lambda \leq  \frac{\Omega}{4 \big(1+e^{D\Omega}\big)}$. Since $\radiuscrit \leq 4D^2$, then $2\lambdalow \geq \frac{\omega}{4 \big(1+e^{D\Omega}\big)}$.
	\end{proof}
	
	\section{Side remarks}
	\label{sideremarks}
	
	This part contains results supporting various remarks made in the main text.

	\subsection{Interpretation of the critical radius}\label{app:crit}
	The results of this part justify the interpretation of the radius made in \Cref{rem:rhocrit} and \Cref{prop:radius_lossfluctuation_appendix}
	
	\begin{proposition}
		\label{prop:interpretation_radiuscritWDRO_appendix} If $\rho \geq \radiuscrit$, then there exists $f \in \F$ such that
		
		\begin{equation*}
			R_\rho(f) = \max_{\xi \in \Xi} f(\xi).
		\end{equation*}
		
		In particular, in the setting of \Cref{th:generalization_standard}, if $\rho \geq \radiuscrit + \frac{\alpha}{\sqrt{n}}$, with probability at least $1 - \delta$, there exists $f \in \F$ such that
		\begin{equation*}
			\widehat{R}_\rho(f) = \max_{\xi \in \Xi} f(\xi).
		\end{equation*}
	\end{proposition}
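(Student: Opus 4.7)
My plan starts from the trivial observation that $R_\rho(f) \leq \max_\Xi f$ for every $f \in \F$ and every $\rho \geq 0$, since $\E_{\zeta \sim Q}[f(\zeta)] \leq \max_\Xi f$ for any $Q \in \PXi$; the whole task is to exhibit a single $f^* \in \F$ realizing equality, and the same $f^*$ will serve in both parts.

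For the deterministic statement, I would choose $f^*$ as a minimizer of $G(f) := \ExiP[\min\{c(\xi,\zeta) : \zeta \in \argmax_\Xi f\}]$ over $\F$, so that $G(f^*) = \radiuscrit$. Existence of such a minimizer follows from compactness of $(\F,\|\cdot\|_\infty)$ together with lower semicontinuity of $G$: outer semicontinuity of $f \mapsto \argmax_\Xi f$ (\Cref{lem:argmax_ush}) combined with \Cref{lem:max_ush_is_usc} makes $f \mapsto \min\{c(\xi,\zeta) : \zeta \in \argmax_\Xi f\}$ lsc for each $\xi$, and Fatou's lemma (applicable since $c$ is bounded on the compact $\Xi \times \Xi$) propagates lsc to $G$. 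I would then invoke the measurable maximum theorem (18.19 in \cite{infinite}) to produce a measurable selector $\zeta^* : \Xi \to \Xi$ with $\zeta^*(\xi) \in \argmax_\Xi f^*$ and $c(\xi,\zeta^*(\xi)) = \min_{\zeta \in \argmax_\Xi f^*} c(\xi,\zeta)$. The push-forward $Q^* := \zeta^*_\# P$ together with the coupling $\pi^* := (\mathrm{id},\zeta^*)_\# P$ then satisfies $[\pi^*]_1 = P$, $[\pi^*]_2 = Q^*$, $\E_{\pi^*}[c] = G(f^*) = \radiuscrit \leq \rho$, and $\E_{\zeta \sim Q^*}[f^*(\zeta)] = \ExiP[f^*(\zeta^*(\xi))] = \max_\Xi f^*$, so that $R_\rho(f^*) \geq \max_\Xi f^*$.

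For the empirical statement I would reuse the deterministic pair $(f^*,\zeta^*)$, set $\widehat{Q}_n := \zeta^*_\# \widehat{P}_n$ with coupling $(\mathrm{id},\zeta^*)_\# \widehat{P}_n$, and obtain $W_c(\widehat{P}_n,\widehat{Q}_n) \leq \E_{\xi \sim \widehat{P}_n}[h(\xi)]$ where $h(\xi) := c(\xi,\zeta^*(\xi))$ is a single bounded function on $\Xi$. A one-function Hoeffding inequality then gives $\E_{\widehat{P}_n}[h] \leq \E_P[h] + O(1/\sqrt{n}) = \radiuscrit + O(1/\sqrt{n})$ with probability at least $1-\delta$; the deviation is comfortably dominated by the constant $\alpha/\sqrt{n}$ from \Cref{th:generalization_standard} (which already carries a $\sqrt{\log(2/\delta)}$ tail term and the relevant scale factors in $\maxF$ and $\lambdalow$), so the hypothesis $\rho \geq \radiuscrit + \alpha/\sqrt{n}$ forces $W_c(\widehat{P}_n,\widehat{Q}_n) \leq \rho$ and, combined with $\E_{\widehat{Q}_n}[f^*] = \max_\Xi f^*$, yields $\widehat{R}_\rho(f^*) = \max_\Xi f^*$.

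The one delicate step is the lower semicontinuity of $G$ needed to attain the infimum in $\radiuscrit$: because $f \mapsto \argmax_\Xi f$ is only outer semicontinuous, lsc of the inner $\min$ in $\xi$ and uniform domination for Fatou both need to be invoked, rather than relying on any continuity of the set-valued map. Everything else --- the measurable selection, the push-forward construction, and the single-function concentration --- is essentially routine given the tools collected in \Cref{sec:technical_preliminaries}.
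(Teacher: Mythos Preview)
Your first part is correct and works out explicitly what the paper only cites from \cite[Remark~3.2]{azizian2023exact}: you argue that the infimum defining $\radiuscrit$ is attained at some $f^*$ (via lower semicontinuity of $G$, using the same outer-semicontinuity / Fatou mechanism that appears in the proofs of \Cref{prop:continuity_radius} and \Cref{prop:radius_lossfluctuation_appendix}), and then build a coupling $(\mathrm{id},\zeta^*)_\#P$ with $\zeta^*(\xi)\in\argmax_\Xi f^*$ to certify $R_\rho(f^*)\geq\max_\Xi f^*$. This is exactly the intended construction.

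For the second part your route differs from the paper's, and it has a gap. The paper does not re-concentrate anything: it simply invokes the extended bound $\widehat{R}_\rho(f)\geq R_{\rho-\alpha/\sqrt{n}}(f)$ from \Cref{corr:generalization_standard} and then applies the first part at radius $\rho-\alpha/\sqrt{n}\geq\radiuscrit$, so that the constant $\alpha$ is inherited \emph{by construction}. You instead apply a one-function Hoeffding bound to $h(\xi)=c(\xi,\zeta^*(\xi))$, whose deviation is governed by the range of $h$, hence by $\|c\|_\infty$. Your claim that this deviation is ``comfortably dominated'' by $\alpha/\sqrt{n}$ is not justified: $\alpha$ is assembled from $\maxF$, $\lambdalow$, and $\mathcal{I}_\F$, and nothing in the paper bounds $\|c\|_\infty$ in terms of these quantities. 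Your argument therefore establishes the conclusion with some cost-dependent constant in place of $\alpha$, but not with the specific $\alpha$ that appears in the statement. The paper's approach sidesteps this mismatch precisely because it reuses the theorem whose constant is $\alpha$.
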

	\begin{proof} The first part is identical to the square cost case, see \cite[Remark 3.2 ]{azizian2023exact}. The second part is obtained by \Cref{corr:generalization_standard}: in the setting of \Cref{corr:generalization_standard}, we have with probability at least $1 - \delta$, $\widehat{R}_\rho(f) \geq R_{\rho - \alpha/\sqrt{n}}$ for all $f \in \F$. Hence if $\rho \geq \radiuscrit + \alpha / \sqrt{n}$, we obtain the result by applying the first part to the radius $\rho - \alpha/\sqrt{n}$.
	\end{proof}

	The following result gives an interpretation of the critical radius $\radiuscrit^{\tau,\epsilon}$ in regularized WDRO appearing in  \Cref{th:generalization_regularized}. We show that when the radius $\rho$ is larger than this value, then some robust losses become degenerated. Precisely, they become independent of $\rho$ and are equal to a regularized version of the worst-case loss $\max_\Xi f$.
	
	\begin{proposition} \label{prop:radiuscrit_reg} Assume $\rho \geq \radiuscrit^{\tau,\epsilon}$. Then there exists $f \in \F$ such that
		\begin{equation*}
			R_{\rho}^{\tau,\epsilon}(f) = \sup_{\substack{\pi \in \PXixi \\ [\pi]_1 = P}} \accol{\E_{\zeta \sim [\pi]_2}[f(\zeta)] - \epsilon \KL(\pi \| \pi_0)}. 
		\end{equation*}
		
		In particular, in the setting of \Cref{th:generalization_regularized}, if $\rho \geq \radiuscrit^{\tau,\epsilon} + \frac{\alpha^{\tau,\epsilon}}{\sqrt{n}}$, with probability at least $1 - \delta$, there exists $f \in \F$ such that
		\begin{equation*}
			\widehat{R}_\rho^{\tau,\epsilon}(f) = \sup_{\substack{\pi \in \PXixi \\ [\pi]_1 = P}} \accol{\E_{\zeta \sim [\pi]_2}[f(\zeta)] - \epsilon \KL(\pi \| \pi_0)}.
		\end{equation*}
	\end{proposition}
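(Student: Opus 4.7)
The plan is to use strong duality to rewrite $R_\rho^{\tau,\epsilon}(f)$ as a convex optimisation in the dual variable $\lambda$, and then to recognise that $\radiuscrit^{\tau,\epsilon}$ is exactly the threshold beyond which this infimum is attained at $\lambda = 0$. By \Cref{prop:duality_regularized}, for every $f \in \F$ we have $R_\rho^{\tau,\epsilon}(f) = \inf_{\lambda \geq 0} L_f(\lambda)$ with $L_f(\lambda) := \lambda \rho + \ExiP[\phi^{\tau,\epsilon}(\lambda, f, \xi)]$, and $L_f$ is convex in $\lambda$. Its right derivative at zero is
\begin{equation*}
\partial_\lambda^+ L_f(0) = \rho + \ExiP[\partial_\lambda \phi^{\tau,\epsilon}(0, f, \xi)] = \rho - \radiusmax^{\tau,\epsilon}(0)[f],
\end{equation*}
where the second equality invokes the explicit formula for $\partial_\lambda \phi^{\tau,\epsilon}$ derived in the proof of \Cref{prop:continuity_radius_regularized}, and $\radiusmax^{\tau,\epsilon}(0)[f] := \ExiP[-\partial_\lambda \phi^{\tau,\epsilon}(0, f, \xi)]$ is precisely the integrand of \eqref{eq:reg_crit}, so that $\radiuscrit^{\tau,\epsilon} = \inf_{f \in \F} \radiusmax^{\tau,\epsilon}(0)[f]$.

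The next step is to promote this infimum over $\F$ to a minimum using the compactness of $(\F, \|\cdot\|_\infty)$. A dominated-convergence argument shows that $f \mapsto \radiusmax^{\tau,\epsilon}(0)[f]$ is continuous on $\F$: both $f$ and $c$ are bounded on the compact $\Xi$, and the Gibbs map $f \mapsto \pi_0^{f/\epsilon}(\cdot|\xi)$ varies continuously in total variation, uniformly in $\xi$. Compactness of $\F$ then yields some $f^* \in \F$ with $\radiusmax^{\tau,\epsilon}(0)[f^*] = \radiuscrit^{\tau,\epsilon}$. When $\rho \geq \radiuscrit^{\tau,\epsilon}$ we have $\partial_\lambda^+ L_{f^*}(0) \geq 0$, so by convexity $L_{f^*}$ is nondecreasing on $\R_+$ and its infimum is reached at $\lambda = 0$, giving
\begin{equation*}
R_\rho^{\tau,\epsilon}(f^*) = L_{f^*}(0) = \ExiP[\phi^{\tau,\epsilon}(0, f^*, \xi)] = \ExiP\bigl[\epsilon \log \Ezetaxi[e^{f^*(\zeta)/\epsilon}]\bigr].
\end{equation*}
The last expression matches the target supremum by the Donsker--Varadhan variational formula applied conditionally to each $\pi_0(\cdot|\xi)$ and then integrated against $P$; equivalently, it is the limit of the duality identity of \Cref{prop:duality_regularized} as $\rho \to \infty$, in which the inner infimum necessarily collapses to $\lambda = 0$.

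For the second statement I would simply compose the first part with \Cref{corr:generalization_regularized}. In the regime of \Cref{th:generalization_regularized}, that result gives, with probability at least $1 - \delta$, the uniform inequality $\widehat{R}_\rho^{\tau,\epsilon}(f) \geq R_{\rho - \alpha^{\tau,\epsilon}/\sqrt{n}}^{\tau,\epsilon}(f)$ for all $f \in \F$. When $\rho \geq \radiuscrit^{\tau,\epsilon} + \alpha^{\tau,\epsilon}/\sqrt{n}$ the reduced radius $\rho - \alpha^{\tau,\epsilon}/\sqrt{n}$ still exceeds $\radiuscrit^{\tau,\epsilon}$, so applying the first part to it yields an $f^* \in \F$ whose robust risk already saturates at the target supremum, exactly as in the WDRO case of \Cref{prop:interpretation_radiuscritWDRO_appendix}. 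The main obstacle throughout is the attainment of $\radiuscrit^{\tau,\epsilon}$ by some $f^*$, which hinges on the continuity of $f \mapsto \radiusmax^{\tau,\epsilon}(0)[f]$; once that is secured the remainder is routine convex duality together with a direct invocation of the generalisation theorem.
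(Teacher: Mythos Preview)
Your approach is correct and takes a genuinely different route from the paper. The paper works on the \emph{primal} side: for a given $f$ it builds the explicit coupling $\pi^*$ with $[\pi^*]_1 = P$ and conditionals $\pi^*(\cdot|\xi) = \pi_0^{f/\epsilon}(\cdot|\xi)$, checks that $\E_{\pi^*}[c] + \tau \KL(\pi^*\|\pi_0)$ equals the term inside the infimum defining $\radiuscrit^{\tau,\epsilon}$, and then shows via Donsker--Varadhan together with the chain rule for KL that the value $\E_{[\pi^*]_2}[f] - \epsilon\KL(\pi^*\|\pi_0)$ coincides with the unconstrained supremum. You instead stay on the \emph{dual} side: strong duality gives $R_\rho^{\tau,\epsilon}(f) = \inf_{\lambda\geq 0} L_f(\lambda)$, and the convexity of $L_f$ plus the identity $\partial_\lambda^+ L_f(0) = \rho - \radiusmax^{\tau,\epsilon}(0)[f]$ reduce the claim to showing $\rho \geq \radiusmax^{\tau,\epsilon}(0)[f^*]$ for some $f^*$. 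Both arguments land on the same expression $\epsilon\,\ExiP[\log \Ezetaxi e^{f/\epsilon}]$ and finish the identification the same way. Your route buys two things: it avoids the explicit construction of $\pi^*$, and by arguing continuity of $f\mapsto \radiusmax^{\tau,\epsilon}(0)[f]$ and attainment of the infimum you handle the boundary case $\rho = \radiuscrit^{\tau,\epsilon}$ cleanly (the paper's construction only directly gives feasibility when $\rho > \radiuscrit^{\tau,\epsilon}$). The one caveat on your side is that invoking \Cref{prop:duality_regularized} requires a Slater condition (e.g.\ $\rho > m_c$), which is not part of the bare hypothesis of the first statement; this is harmless in the regime of \Cref{th:generalization_regularized} where $\radiuscrit^{\tau,\epsilon} > 4m_c$, but you should flag it. For the second statement your reduction via \Cref{corr:generalization_regularized} matches the paper exactly.
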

	
	\begin{proof}
		
		In the regularized case, we can verify that the critical radius has the expression
		\begin{align}
			\label{eq:radiuscrit_reg}
			\radiuscrit^{\tau,\epsilon} = \inf_{f \in \F} \left\{\E_{\xi \sim P} \left[  \E_{\zeta \sim \pi_0^{f/\epsilon}(\cdot | \xi)}\left[\frac{\tau}{\epsilon} f(\zeta) + c(\xi,\zeta)\right] - \tau \log \Ezetaxi e^{\frac{f(\zeta)}{\epsilon}} \right] \right\},
		\end{align}
		see for instance the proof of \Cref{prop:continuity_radius_regularized}. Let $f \in \F$ be arbitrary. Consider a coupling $\pi^* \in \PXixi$ such that $[\pi^*]_1 = P$ and $\pi^*(\cdot | \xi) = \pi_0^{\frac{f}{\epsilon}}(\cdot | \xi)$ for almost all $\xi \in \Xi$. We first verify that for a good choice of $f$, it is included in the uncertainty set defining $R^{\tau,\epsilon}_{\rho}(f)$.

		We compute  $\KL(\pi^* \| \pi_0)$. Below, we set $Z(\xi) := \Ezetaxi \bracket{e^{\frac{f(\zeta)}{\epsilon}}}$.
		\begin{align}
			\label{eq:KLcouplingreg}
			\KL(\pi^* \| \pi_0) & = \E_{\xi \sim P} \bracket{\E_{\zeta \sim \pi_0^{\frac{f}{\epsilon}}(\cdot|\xi)} \bracket{\log \parx{\frac{e^{\frac{f(\zeta)}{\epsilon}}}{Z(\xi)}}}} \nonumber \\ & = \E_{\xi \sim P} \bracket{\E_{\zeta \sim \pi_0^{\frac{f}{\epsilon}}(\cdot | \xi)} \bracket{\frac{f(\zeta)}{\epsilon}} - \log \Ezetaxi \bracket{e^{\frac{f(\zeta)}{\epsilon}}}} \nonumber\\
			& = \E_{(\xi,\zeta) \sim \pi^*} \bracket{\frac{f(\zeta)}{\epsilon}} - \E_{\xi \sim P}\bracket{\log \Ezetaxi \bracket{e^{\frac{f(\zeta)}{\epsilon}}}}.
		\end{align}
		This leads to 
		\begin{equation*}
			\E_{\pi^*} \bracket{c} + \tau \KL(\pi^* \| \pi_0) = \E_{\xi \sim P} \left[  \E_{\zeta \sim \pi_0^{f/\epsilon}(\cdot | \xi)}\left[\frac{\tau}{\epsilon} f(\zeta) + c(\xi,\zeta)\right] - \tau \log \Ezetaxi e^{\frac{f(\zeta)}{\epsilon}} \right]
		\end{equation*}
		which is the term in the infimum \eqref{eq:radiuscrit_reg}. Since $f$ was chosen arbitrary, this means that if $\rho > \radiuscrit^{\tau,\epsilon}$, then there exists $f \in \F$ such that the coupling $\pi^*$ defined above (depending on $f$) satisfies $ \E_{(\xi,\zeta) \sim \pi^*} \bracket{c(\xi,\zeta)} + \tau \KL(\pi^* \| \pi_0) \leq \rho$, and we obtain
		\begin{equation*}
			R^{\tau,\epsilon}_{\rho}(f) \geq \E_{\zeta \sim [\pi^*]_2}[f(\zeta)] - \epsilon \KL(\pi^* \| \pi_0).
		\end{equation*}
		On the other hand by the computation \eqref{eq:KLcouplingreg}, we have 
		\begin{equation}
			\label{eq:lastinequality}
			R^{\tau,\epsilon}_{\rho}(f) \geq \E_{\zeta \sim [\pi^*]_2} \bracket{f(\zeta)} - \epsilon \KL(\pi^* \| \pi_0) = \epsilon \E_{\xi \sim P} \bracket{\log \Ezetaxi \bracket{e^{\frac{f(\zeta)}{\epsilon}}}}.
		\end{equation}
		By Donsker-Varadhan variational formula \cite{donskervaradhan},  for almost all $\xi \in \Xi$, we have
		\begin{equation}
			\label{eq:donsker_radius}
			\log \Ezetaxi \bracket{e^{\frac{f(\zeta)}{\epsilon}}} = \sup_{\nu \in \PXi} \accol{\E_{\zeta \sim \nu}[f(\zeta)/\epsilon] - \KL(\nu \| \pi_0(\cdot|\xi))}.
		\end{equation}
		Reinjecting \eqref{eq:donsker_radius} in \eqref{eq:lastinequality} gives
		\begin{align*}
			R^{\tau,\epsilon}_{\rho}(f) & \geq \epsilon \E_{\xi \sim P} \bracket{\sup_{\nu \in \PXi} \accol{\E_{\zeta \sim \nu}[f(\zeta)/\epsilon] - \KL(\nu \| \pi_0(\cdot|\xi))}} \\ 
			& \geq \sup_{\substack{\pi \in \PXixi \\ [\pi]_1 = P}} \accol{\E_{\xi \sim P} \bracket{ \E_{\zeta \sim \pi(\cdot | \xi)}[f(\zeta)] - \epsilon \KL(\pi(\cdot | \xi) \| \pi_0(\cdot | \xi))}}
			\\ & =\sup_{\substack{\pi \in \PXixi \\ [\pi]_1 = P}} \accol{\E_{\zeta \sim [\pi]_2} [f(\zeta)] - \epsilon \KL(\pi \| \pi_0)},
		\end{align*}
		where we used the chain rule for $\KL$ divergence (see e.g. Theorem 2.15 in \cite{klchainrule}): $\KL(\pi \| \pi_0) = \E_{\xi \sim P}[\KL(\pi(\cdot | \xi) \| \pi_0(\cdot | \xi))] + \KL([\pi]_1 \| [\pi_0]_1) \geq \E_{\xi \sim P}[\KL(\pi(\cdot | \xi) \| \pi_0(\cdot | \xi))]$. Since we clearly have $R^{\tau,\epsilon}_{\rho}(f) \leq \underset{\substack{\pi \in \PXixi \\ [\pi]_1 = P}}{\sup} \accol{\E_{\zeta \sim [\pi]_2} [f(\zeta)] - \epsilon \KL(\pi \| \pi_0)}$, this yields the first part.

		The second part is a direct consequence of the generalization bound \Cref{corr:generalization_regularized} as for the standard case (see the proof of \Cref{prop:interpretation_radiuscritWDRO_appendix}).
	\end{proof}

	The following result allows to interpret $\radiuscrit$ as a fluctuations measure on $\F$:
	
	\begin{proposition}[Critical radius and loss fluctuations] \label{prop:radius_lossfluctuation_appendix} Suppose $\operatorname{supp} P = \Xi$. Under \Cref{ass:general_assumptions} $\radiuscrit > 0$ if and only if $\mathcal{F}$ contains no constant functions.
	\end{proposition}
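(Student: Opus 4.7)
The plan is to prove the two directions of the equivalence separately; the backward implication is the substantive one and is where the support assumption is used.

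The forward direction is immediate by contraposition: if some $f \in \F$ is constant, then $\argmax_\Xi f = \Xi$, and for every $\xi$ the inner minimum is bounded above by $c(\xi,\xi) = 0$ via \Cref{ass:general_assumptions}.2; since $c \geq 0$, the integrand in the definition of $\radiuscrit$ is identically zero, so $\radiuscrit = 0$.

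For the backward direction, I would argue by contradiction: suppose $\F$ contains no constant function yet $\radiuscrit = 0$. Define $g(f) := \ExiP[c^*(\xi,\argmax_\Xi f)]$ where $c^*(\xi,K) := \min_{\zeta \in K} c(\xi,\zeta)$. The key technical step is to prove $g$ is lower semicontinuous on $(\F, \|\cdot\|_\infty)$: by \Cref{lem:argmax_ush}, $f \mapsto \argmax_\Xi f$ is outer semicontinuous with compact values, and since $c$ is continuous, \Cref{lem:max_ush_is_usc} gives that $(\xi, f) \mapsto c^*(\xi, \argmax_\Xi f)$ is jointly lower semicontinuous, with values in $[0, \max_{\Xi \times \Xi} c]$ (a bounded interval by compactness of $\Xi$). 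Fatou's lemma applied to a convergent sequence $f_n \to f^*$ then yields $g(f^*) \leq \liminf_n g(f_n)$. Compactness of $(\F, \|\cdot\|_\infty)$ in \Cref{ass:general_assumptions}.3 therefore ensures that the infimum defining $\radiuscrit$ is attained at some $f^* \in \F$ with $g(f^*) = 0$.

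Since the non-negative integrand integrates to zero, $c^*(\xi, \argmax_\Xi f^*) = 0$ for $P$-almost every $\xi$. As $\argmax_\Xi f^*$ is a nonempty compact subset of $\Xi$, the minimum is attained at some $\zeta^\xi$ with $c(\xi, \zeta^\xi) = 0$, which forces $\zeta^\xi = \xi$ by \Cref{ass:general_assumptions}.2; hence $\xi \in \argmax_\Xi f^*$ for $P$-a.e.\ $\xi$. The set $\argmax_\Xi f^* = (f^*)^{-1}(\max_\Xi f^*)$ is closed by continuity of $f^*$, so its complement is open with $P$-measure zero. Invoking $\operatorname{supp} P = \Xi$, every nonempty open subset of $\Xi$ has strictly positive $P$-measure, so this complement is empty; hence $f^* \equiv \max_\Xi f^*$ on $\Xi$, contradicting the hypothesis that $\F$ contains no constant function.

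The main obstacle is establishing the lower semicontinuity of $g$: the map $f \mapsto \argmax_\Xi f$ is genuinely discontinuous, but outer semicontinuity combined with continuity of $c$ is precisely what \Cref{lem:max_ush_is_usc} requires to deliver lower semicontinuity of the inner minimum, after which Fatou closes the argument. The remaining steps are topological bookkeeping exploiting that the support of $P$ is everywhere.
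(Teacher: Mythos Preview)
Your proof is correct and follows essentially the same approach as the paper's: both directions are argued by contraposition, the key step being that the map $f \mapsto \E_P[c^*(\xi,\argmax_\Xi f)]$ is lower semicontinuous via \Cref{lem:argmax_ush} and \Cref{lem:max_ush_is_usc} combined with Fatou, so that compactness of $\F$ yields a minimizer $f^*$ with value zero, and then full support forces $f^*$ to be constant. The only cosmetic difference is that you package the minimizing-sequence extraction as ``lsc on compact attains its infimum,'' whereas the paper writes out the sequence explicitly; your final topological step (complement of the closed $\argmax$ set is open with zero $P$-measure, hence empty) is also spelled out slightly more than in the paper, but the content is identical.
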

	
	\begin{proof} We prove the equivalence by contraposition. For the first implication, assume there exists $f \in \mathcal{F}$ such that for all $\xi$ and $\zeta$ in $\Xi$, $f(\xi)=f(\zeta)$. This means for all $\xi \in \Xi$ we have $\xi \in \argmax_\Xi f$, whence $\min \accol{c(\xi, \zeta) \ : \ \zeta \in \argmax_\Xi f} = 0$ because $c(\xi,\xi) = 0$. By integrating with respect to $\xi \sim P$, we obtain $\radiuscrit = 0$.

		Now, towards the other implication we assume $\radiuscrit = 0$ and we show that $\mathcal{F}$ contains a constant function. Let $(f_k)_{k \in \N}$ be a sequence of functions from $\F$ such that  $$\ExiP\bracket{\min \accol{c(\xi, \zeta) \ : \ \zeta \in \argmax_\Xi f_k}} \underset{k \to \infty}{\longrightarrow} 0.$$
		
		By compactness of $\F$, \Cref{ass:general_assumptions}.3, we may assume $(f_k)_{k \in \N}$ to converge to some function $f^*$ for the norm $\|\cdot\|_\infty$. The set-valued map $f \mapsto \argmax_\Xi f$ is outer semicontinuous with compact values (\Cref{lem:argmax_ush}) and $c$ is jointly continuous, hence for any $\xi \in \Xi$, the function $f \mapsto \min \accol{c(\xi, \zeta) \ : \ \zeta \in \argmax_\Xi f}$ is lower semicontinuous on $\F$, see \Cref{lem:max_ush_is_usc}. Thus we have 
		
		$$\liminf_{k \to \infty} \min \accol{c(\xi, \zeta) \ : \ \zeta \in \argmax_\Xi f_k} \geq \min \accol{c(\xi, \zeta) \ : \ \zeta \in \argmax_\Xi f^*}$$ by lower semicontinuity (see \Cref{def:lower_upper_semi}).
		
		By integration with respect to $\xi \sim P$, we have
		\begin{align*}
			0 & = \liminf_{k \to \infty} \ExiP \bracket{ \min \accol{c(\xi, \zeta) \ : \ \zeta \in \argmax_\Xi f_k}}\\ & \geq \ExiP \bracket{\liminf_{k \to \infty} \min \accol{c(\xi, \zeta) \ : \ \zeta \in \argmax_\Xi f_k}} \\ & \geq \ExiP \bracket{\min \accol{c(\xi, \zeta) \ : \ \zeta \in \argmax_\Xi f^*}}
		\end{align*}
		hence $\ExiP \bracket{\min \accol{c(\xi, \zeta) \ : \ \zeta \in \argmax_\Xi f^*}} = 0$. Finally, for $P$-almost all $\xi \in \Xi$, $\min \accol{c(\xi, \zeta) \ : \ \zeta \in \argmax_\Xi f^*} = 0$. This means by compactness of $\argmax_\Xi f^*$ that $\xi \in  \argmax_\Xi f^*$ for $P$-almost all $\xi \in \Xi$. Since $\operatorname{supp} P = \Xi$ and $f^*$ is continuous, we obtain that $f^*$ is constant.
	\end{proof}

	\subsection{Necessity of the dual upper bound}
	We exhibit an example where the function $\mu \mapsto \psi^{\tau,\epsilon}(\mu, f, \xi)$ is not Lipschitz as $\mu \to 0$. This justifies the necessity of bounding the dual solution above in the regularized case, as done in \Cref{prop:upper_bound}.
	
	\begin{proposition} \label{prop:nonlipschitz_psi} Consider $\tau = 0$, $\epsilon > 0$, $\Xi = [0,1]$, $c(\xi,\zeta) = |\xi-\zeta|$ and assume that the reference distribution is a truncated Laplace $\pi_0(\di \zeta| \xi) \propto e^{-|\xi - \zeta|} \mathds{1}_{[0,1]}(\zeta) \di \zeta$. Assume furthermore $\mathcal{F}$ is a family of functions from $[0,1]$ to $\R$ which satisfies $e^{-\frac{2\maxF}{\epsilon}} \geq \epsilon$.
		
		Then  for almost all $\xi \in [0,1]$ and all $f \in \F$, $\mu \mapsto \psi^{\tau,\epsilon}(\mu, f, \xi)$ is not Lipschitz at $0^+$.
	\end{proposition}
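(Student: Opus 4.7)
My plan is to reduce the problem to a direct asymptotic expansion of $\psi^{0,\epsilon}(\mu, f, \xi)$ as $\mu \to 0^+$ and exhibit a $\mu \log \mu$ term, which is incompatible with Lipschitz continuity. Specializing the general expression with $\tau = 0$ and the given cost and reference gives, for $\xi \in (0,1)$,
\begin{equation*}
\psi^{0,\epsilon}(\mu, f, \xi) = \mu \epsilon \log I(\mu, \xi), \qquad I(\mu, \xi) = \frac{1}{Z(\xi)} \int_0^1 \exp\!\left(\frac{f(\zeta)}{\epsilon} - |\xi - \zeta|\left(1 + \frac{1}{\mu\epsilon}\right)\right) d\zeta,
\end{equation*}
with $Z(\xi) = 2 - e^{-\xi} - e^{-(1-\xi)}$. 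The core task is to show $I(\mu, \xi) = \Theta(\mu)$ as $\mu \to 0^+$ for a.e.\ fixed $\xi \in (0,1)$.

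For the upper bound, I would simply pull out $e^{\|\mathcal{F}\|_\infty / \epsilon}$ and integrate the exponential tail, yielding $I(\mu, \xi) \leq C_1(\xi)\, \mu$ with $C_1(\xi) = \frac{2 \epsilon}{Z(\xi)} e^{\|\mathcal{F}\|_\infty/\epsilon}$ up to a $(1 + o(1))$ factor coming from the $1 + 1/(\mu\epsilon)$ denominator. For the matching lower bound, I would restrict the integral to a shrinking window $|\xi - \zeta| \leq \mu\epsilon$ around $\xi$ (inside $(0,1)$ for $\mu$ small enough since $\xi$ is interior), use continuity of $f$ and $f \geq -\|\mathcal{F}\|_\infty$ to lower bound the exponent, and carry out the remaining one dimensional integral explicitly. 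This yields $I(\mu, \xi) \geq C_2(\xi)\, \mu$ with $C_2(\xi) = \frac{c_0}{Z(\xi)} e^{-\|\mathcal{F}\|_\infty/\epsilon}$ for a universal constant $c_0 > 0$. The assumption $e^{-2\|\mathcal{F}\|_\infty/\epsilon} \geq \epsilon$ enters at this stage to guarantee that these two sided $\Theta(\mu)$ estimates are informative down to a non trivial range of small $\mu$ (so that the comparison below really picks up the singularity rather than being absorbed into an overall constant).

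Combining the two bounds gives
\begin{equation*}
\psi^{0,\epsilon}(\mu, f, \xi) = \mu \epsilon \log \mu + \mu \epsilon\, \kappa(\mu, \xi), \qquad \text{with $\kappa(\mu, \xi)$ bounded as $\mu \to 0^+$.}
\end{equation*}
The first term $\mu \epsilon \log \mu$ has derivative $\epsilon(1 + \log \mu) \to -\infty$ as $\mu \to 0^+$, and the second term is $O(\mu)$, hence Lipschitz. Since the sum of a Lipschitz function and a non Lipschitz function is non Lipschitz, $\psi^{0,\epsilon}(\cdot, f, \xi)$ is not Lipschitz at $0^+$.

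The only delicate step is the explicit two sided control on $I(\mu, \xi)$: the upper bound is a one line computation, but the lower bound requires isolating the peak of the integrand without losing the $\mu$ scale. My plan is to use the explicit Laplace type integral $\int_0^{\mu\epsilon} e^{-u/(\mu\epsilon)} du = \mu\epsilon (1 - e^{-1})$ on the restricted window, which gives the correct linear order in $\mu$ without any oscillatory or saddle point analysis. Once this is in place, the $\mu \log \mu$ singularity is immediate and the non Lipschitz conclusion follows.
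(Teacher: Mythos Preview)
Your approach is correct and genuinely different from the paper's. The paper works with the \emph{derivative} $\partial_\mu \psi^{0,\epsilon}$: it bounds this derivative above by $\|\mathcal{F}\|_\infty - \KL\big(\pi_0^{-c(\xi,\cdot)/(\mu\epsilon)}(\cdot|\xi)\,\|\,\pi_0(\cdot|\xi)\big)$ (this is where the hypothesis $e^{-2\|\mathcal{F}\|_\infty/\epsilon}\geq\epsilon$ is used), and then shows the KL term diverges by proving $\pi_0^{-|\xi-\cdot|/(\mu\epsilon)}(\cdot|\xi)$ converges weakly to $\delta_\xi$ and invoking lower semicontinuity of the KL divergence. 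Your route is a direct Laplace-type estimate on the integral $I(\mu,\xi)$ itself, extracting the $\mu\epsilon\log\mu$ singularity of $\psi^{0,\epsilon}$ without touching derivatives or KL at all. Your argument is more elementary and in fact stronger: the two-sided bound $I(\mu,\xi)=\Theta(\mu)$ holds for every $\xi\in(0,1)$ and every bounded $f$, so the hypothesis $e^{-2\|\mathcal{F}\|_\infty/\epsilon}\geq\epsilon$ is not needed in your proof. Your attempt to locate where that hypothesis enters is therefore spurious; you can simply drop that sentence.

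One small logical slip to fix: ``the second term is $O(\mu)$, hence Lipschitz'' is not valid in general (e.g.\ $\mu\sin(1/\mu)$ is $O(\mu)$ but not Lipschitz at $0$). The clean way to conclude is to note that $\psi^{0,\epsilon}(\mu,f,\xi)\to 0$ as $\mu\to 0^+$, so Lipschitz continuity on any interval $(0,\delta)$ would force $|\psi^{0,\epsilon}(\mu,f,\xi)/\mu|$ to be bounded; but $\psi^{0,\epsilon}(\mu,f,\xi)/\mu=\epsilon\log I(\mu,\xi)=\epsilon\log\mu+\epsilon\kappa(\mu,\xi)\to-\infty$, a contradiction. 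This uses only boundedness of $\kappa$, which is exactly what your $\Theta(\mu)$ estimate delivers.
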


	\begin{proof}

		Let $\xi \in (0,1)$ and $f \in \F$. The expression of the derivative of $\psi^{0,\epsilon}$ with respect to $\mu$ is given by \eqref{eq:derivative_h_mu}:
		\begin{equation*}
			\partial_\mu \psi^{0,\epsilon}(\mu, f, \xi) = \E_{\zeta \sim \pi_0^\frac{\mu f - c(\xi, \cdot)}{\mu \epsilon} (\cdot | \xi)}   \left[\frac{\epsilon c (\xi,\zeta)}{\mu \epsilon}\right]
			+ \epsilon \log \E_{\zeta \sim \pi_0(\cdot | \xi)} \left[e^{\frac{\mu f(\zeta) - c(\xi, \zeta)}{\mu \epsilon}} \right].
		\end{equation*}
		In particular, it satisfies 
		\begin{equation}
			\label{eq:derivative_counterexample}
			\partial_\mu \psi^{0,\epsilon}(\mu, f, \xi) \leq e^{\frac{2\maxF}{\epsilon}} \E_{\zeta \sim \pi_0^{-\frac{c(\xi,\cdot)}{\mu\epsilon}}(\cdot | \xi)} \bracket{\frac{c(\xi,\zeta)}{\mu}} + \epsilon \log \E_{\zeta \sim \pi_0(\cdot | \xi)} \left[e^{-\frac{c(\xi, \zeta)}{\mu \epsilon}} \right] + \maxF.
		\end{equation}
		On the other hand, by Donsker-Varadhan formula \cite{donskervaradhan}, we can write
		\begin{equation*}
			\log \E_{\zeta \sim \pi_0(\cdot | \xi)} \left[e^{\frac{- c(\xi, \zeta)}{\mu \epsilon}} \right] = \E_{\zeta \sim \pi_0^{\frac{-c(\xi,\zeta)}{\mu\epsilon}}(\cdot|\xi)} \bracket{\frac{- c(\xi,\zeta)}{\mu \epsilon}} - \KL \parx{\pi_0^{\frac{-c(\xi,\cdot)}{\mu \epsilon}}(\cdot|\xi) \middle\| \pi_0(\cdot|\xi)}.
		\end{equation*}
		Reinjecting this in \eqref{eq:derivative_counterexample} and using $e^{-\frac{2\maxF}{\epsilon}} \geq \epsilon$ gives
		\begin{equation*}
			\partial_\mu \psi^{\tau,\epsilon}(\mu, f, \xi) \leq  \maxF -  \KL \parx{\pi_0^{-\frac{c(\xi,\cdot)}{\mu \epsilon}}(\cdot|\xi) \middle\| \pi_0(\cdot|\xi)}.
		\end{equation*}
		Consequently, to prove non-Lipschitzness of $ \psi^{0,\epsilon}(\cdot, f, \xi)$ at $0$, we show that $$\KL \parx{\pi_0^{-\frac{ c(\xi,\cdot)}{\mu \epsilon}}(\cdot|\xi) \middle\| \pi_0(\cdot|\xi)} \to \infty$$ as $\mu \to 0$.
		We show that $\pi_0^{-\frac{|\xi - \cdot|}{\mu \epsilon}}(\cdot|\xi)$ converges in law to $\delta_\xi$. Let $\varphi : \R \to \R$ be of class $C^\infty$ with compact support. With the change of variable $u \leftarrow \frac{\xi - \zeta}{\mu \epsilon}$, we have
		\begin{equation*}
			\int_0^1 e^{- \frac{|\xi-\zeta|}{\mu \epsilon}} \varphi(\zeta) \di \zeta = \mu \epsilon \int_{\R} \mathds{1}_{\bracket{\frac{\xi - 1}{\mu \epsilon}, \frac{\xi}{\mu \epsilon}}}(u) e^{- |u|} \varphi(\xi + \mu \epsilon u) \di u.
		\end{equation*}
		Also, we easily verify that
		\begin{equation*}
			\int_0^1 e^{- \frac{|\xi-\zeta|}{\mu \epsilon}} \di \zeta = \int_{0}^\xi e^{-\frac{\xi - \zeta}{\mu \epsilon}} \di \zeta + \int_\xi^1 e^{- \frac{\zeta - \xi}{\mu \epsilon}} \di \zeta = \mu \epsilon (2 - e^{-\frac{\xi}{\mu  \epsilon} }-  e^{\frac{-(1- \xi)}{\mu \epsilon}}),
		\end{equation*}
		hence we obtain
		\begin{equation}
			\label{eq:integral_test_function}
			\E_{\zeta \sim \pi_0^{-\frac{|\xi- \cdot|}{\mu \epsilon}}}[\varphi(\zeta)] = \frac{\int_{\R} \mathds{1}_{\bracket{\frac{\xi - 1}{\mu \epsilon}, \frac{\xi}{\mu \epsilon}}}(u) e^{- |u|} \varphi(\xi + \mu \epsilon u) \di u}{ 2 - e^{-\frac{\xi}{\mu  \epsilon} }-  e^{\frac{-(1- \xi)}{\mu \epsilon}}}.
		\end{equation}
		
		We then have the following: 
		\begin{itemize}
			\item $2 - e^{-\frac{\xi}{\mu  \epsilon} }-  e^{\frac{-(1- \xi)}{\mu \epsilon}}$ converges to $2$ as $\mu \to 0$,
			\item For all $u \in \R$, $\mathds{1}_{\bracket{\frac{\xi - 1}{\mu \epsilon}, \frac{\xi}{\mu \epsilon}}}(u) e^{- |u|} \varphi(\xi + \mu \epsilon u) \di u$ converges to $e^{-|u|}\varphi(\xi)$ as $\mu \to 0$, hence its integral with respect to $u$ converges to $2 \varphi(\xi)$ by dominated convergence theorem.
		\end{itemize}
		
		Combining both limits in \eqref{eq:integral_test_function} gives $\E_{\zeta \sim \pi_0^{-\frac{|\xi- \cdot|}{\mu \epsilon}}(\cdot | \xi)}[\varphi(\zeta)] \to \varphi(\xi)$. This means that $\pi_0^{-\frac{|\xi- \cdot|}{\mu \epsilon}}(\cdot | \xi)$ converges in law to $\delta_\xi$. We have $\KL(\delta_\xi \| \pi_0(\cdot|\xi)) = \infty$, hence by lower semicontinuity of the $\KL$-divergence for the convergence in law (or weak convergence), see e.g. Theorem 4.9 from \cite{klchainrule},  we get $\KL \parx{\pi_0^{-\frac{ c(\xi,\cdot)}{\mu \epsilon}}(\cdot|\xi) \middle\| \pi_0(\cdot|\xi)} \underset{\mu \to 0}{\longrightarrow} \infty$. This means that $\psi^{0,\epsilon}(\cdot,f,\xi)$ is not Lipschitz near $0$.
	\end{proof}

	\subsection{On continuity of maximizers}
	
	We justify the importance of relaxing Assumption 5.1 from \cite{azizian2023exact} which corresponds to compactness of $\F$ with respect to the distance $D_\F(f,g) := \|f - g\|_\infty + d_H(\argmax_\Xi f, \argmax_\Xi g)$. We show that this condition is actually equivalent to assuming continuity on $f \mapsto \argmax f$, which is a strong condition and difficult to verify in practice.

	\begin{proposition} \label{prop:continuity_argmax} For $(f,g) \in \mathcal{F} \times \F$, define $$D_\F(f,g) := \|f - g\|_\infty + d_H(\argmax_\Xi f, \argmax_\Xi g)$$ where $d_H$ is the Hausdorff distance on the set of compact subsets of $\Xi$, $\mathcal{K}(\Xi)$ . Assume $(\F, \|\cdot\|_\infty)$ is compact. Then we have the equivalence
		\begin{equation*}
			(\F, D_\F) \text{ is compact } \Longleftrightarrow f \mapsto \argmax_\Xi f \text{ is continuous from } (\F, \|\cdot\|_\infty) \text{ to } (\mathcal{K}(\Xi), d_H).
		\end{equation*}
	\end{proposition}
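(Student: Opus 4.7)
The plan is to exploit the elementary observation that convergence in $D_\F$ decomposes into simultaneous $\|\cdot\|_\infty$-convergence of the functions and $d_H$-convergence of their $\argmax$ sets, together with the fact that compactness in metric spaces coincides with sequential compactness. Both directions start from the assumed $\|\cdot\|_\infty$-compactness of $\F$ and proceed by a standard subsequence-extraction argument. I would first record that $D_\F$ is a well-defined metric: since each $f \in \F$ is continuous on the compact space $\Xi$, the set $\argmax_\Xi f$ is a nonempty compact subset of $\Xi$ and hence lies in the domain of the Hausdorff distance $d_H$.

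For the direction ($\Leftarrow$), I would take any sequence $(f_n) \subset \F$, apply $\|\cdot\|_\infty$-compactness to extract a subsequence $f_{n_k} \to f \in \F$ uniformly, and then invoke continuity of $f \mapsto \argmax_\Xi f$ to obtain $d_H(\argmax_\Xi f_{n_k}, \argmax_\Xi f) \to 0$. Summing the two contributions yields $D_\F(f_{n_k}, f) \to 0$, which establishes sequential compactness of $(\F, D_\F)$ and hence compactness.

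For the converse ($\Rightarrow$), I would argue by contradiction: if the $\argmax$ map failed to be continuous at some $f$, there would exist $f_n \to f$ in $\|\cdot\|_\infty$ and a subsequence with $d_H(\argmax_\Xi f_{n_k}, \argmax_\Xi f) \geq \varepsilon > 0$ for some $\varepsilon$. Using $D_\F$-compactness, I would extract a further subsequence $f_{n_{k_j}}$ converging in $D_\F$ to some $g \in \F$. Since $\|\cdot\|_\infty \leq D_\F$, the uniform limit must equal $f$, so $g = f$; but then the $d_H$-component of $D_\F$ forces $d_H(\argmax_\Xi f_{n_{k_j}}, \argmax_\Xi f) \to 0$, contradicting the lower bound $\varepsilon$.

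No serious obstacle is expected: the argument is essentially a soft ``two-topology transfer'' for compactness. The only point requiring a moment of care is the contradiction step, where one must use both that the limit $g$ given by $D_\F$-compactness lies in $\F$ and that the majorisation $\|\cdot\|_\infty \leq D_\F$ identifies it with $f$. Everything else reduces to the standard equivalence between compactness and sequential compactness in metric spaces.
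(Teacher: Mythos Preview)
Your proposal is correct and follows essentially the same approach as the paper. The $(\Leftarrow)$ direction is identical, and for $(\Rightarrow)$ the paper uses the equivalent ``unique accumulation point in a compact space forces convergence'' phrasing where you use contradiction, but the core step---extracting a $D_\F$-convergent subsequence and identifying its limit with $f$ via the $\|\cdot\|_\infty$ component---is the same in both.
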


	\begin{proof} We prove $(\Rightarrow)$. Assume $(\F, D_\F)$ is compact. Let $f \in \F$, and let $(g_k)_{k \in \N}$ be an arbitrary sequence from $\F$ such that $g_k$ converges to $f$ for $\|\cdot\|_\infty$. We want to show that $\argmax_\Xi g_k$ converges to $\argmax_\Xi f$ for $d_H$, proving the continuity of the arg max map. By compactness of $(\F, D_\F)$, $(g_k)_{k \in \N}$ admits accumulation points for $D_\F$. Let $h$ be any one of them. We may extract a subsequence from $(g_k)_{k \in \N}$ converging to $h$, say $g_{n_k} \underset{k \to \infty}{\to} h \in \F$. In particular, $g_{n_k}$ converges to $h$ for $\|\cdot\|_\infty$. We necessarily have $h = f$ by definition of the sequence $(g_k)_{k \in \N}$. It means that $(g_k)_{k \in \N}$ admits only one possible accumulation point for $D_\F$, which is $f$. This implies $g_k$ converges to $f$ for $D_\F$, hence $\argmax_\Xi g_k$ converges to $\argmax_\Xi f$.
		
		Now, we prove $(\Leftarrow)$. Let $(f_k)_{k \in \N}$ be a sequence from $\F$. By compactness of $(\F, \|\cdot\|_\infty)$, we may extract a converging subsequence $f_{n_k} \underset{k \to \infty}{\to} f$ for $\|\cdot\|_\infty$. Assuming $f \mapsto \argmax_\Xi f$ is continuous gives that $\argmax_\Xi f_{n_k}$ converges to $\argmax_\Xi f$, which is the desired result.
	\end{proof}

\end{document}